\documentclass[final,onefignum,onetabnum]{siamonline190516}
\usepackage[scale=0.8]{geometry} 



\usepackage{adjustbox}
\usepackage{lipsum}
\usepackage{amsfonts}
\usepackage{graphicx}
\usepackage{epstopdf}
\usepackage{algorithmic}
\ifpdf
  \DeclareGraphicsExtensions{.eps,.pdf,.png,.jpg}
\else
  \DeclareGraphicsExtensions{.eps}
\fi

\usepackage{datetime}
\newdateformat{monthyeardate}{%
	\monthname[\THEMONTH] \THEDAY, \THEYEAR}

\usepackage{academicons}
\usepackage{xcolor}
\renewcommand{\orcid}[1]{\href{https://orcid.org/#1}{\textcolor[HTML]{A6CE39}{orcid.org/#1}}}

\usepackage{amsmath}
\allowdisplaybreaks
\usepackage{amssymb}
\usepackage{commath}
\usepackage{mathtools}
\usepackage{bbm}
\usepackage{bm}
\usepackage{physics}
\usepackage{theoremref}
\usepackage{array}
\usepackage[normalem]{ulem}
\usepackage{color}
\usepackage{graphicx}
\usepackage[small]{caption}
\usepackage{subcaption}
\usepackage{booktabs}

\usepackage{enumitem}
\setlist[enumerate]{leftmargin=.5in}
\setlist[itemize]{leftmargin=.5in}


\newsiamremark{remark}{Remark}
\newsiamremark{hypothesis}{Hypothesis}
\crefname{hypothesis}{Hypothesis}{Hypotheses}
\newsiamthm{claim}{Claim}
\newsiamremark{goal}{Goal}

\headers{Efficient sparsity-promoting MAP estimation}{J.\ Lindbloom, J.\ Glaubitz, and A.\ Gelb}

\title{Efficient sparsity-promoting MAP estimation for Bayesian linear inverse problems
\thanks{\monthyeardate\today 
\corresponding{Jonathan Lindbloom} 
}}

\author{ 
Jonathan Lindbloom\thanks{
Department of Mathematics, Dartmouth College, USA (\email{Jonathan.T.Lindbloom.GR@Dartmouth.edu}, \orcid{0000-0002-1789-2629} and \email{Anne.E.Gelb@Dartmouth.edu}, \orcid{0000-0002-9219-4572})}
\and 
{Jan Glaubitz\thanks{
Department of Aeronautics and Astronautics, Massachusetts Institute of Technology, USA 
(\email{glaubitz@mit.edu}, \orcid{0000-0002-3434-5563})
}} \thanks{
Department of Mathematics, Link\"oping University, Sweden 
}
\and 
Anne Gelb\footnotemark[2]
}

\usepackage{amsopn}
\usepackage{cancel}



\input{jlmacros}

\ifpdf
\hypersetup{
  pdftitle={Generalizing IAS},
  pdfauthor={Lindbloom, Gelb, and Glaubitz}
}
\fi




\crefformat{model}{Model ~(#2#1#3)}

\begin{document}

\maketitle

\begin{abstract}
Bayesian hierarchical models can provide efficient algorithms for finding sparse solutions to ill-posed linear inverse problems. 
The models typically comprise a conditionally Gaussian prior model for the unknown augmented by a generalized gamma hyper-prior model for the variance hyper-parameters. 
This investigation generalizes such models and their efficient maximum a posterior (MAP) estimation using the iterative alternating sequential (IAS) algorithm in two ways:
(1) General sparsifying transforms: 
Diverging from conventional methods, our approach permits  use of sparsifying transformations with nontrivial kernels;
(2) Unknown noise variances: 
The noise variance is treated as a random variable to be estimated during the inference procedure. This is important in applications where the noise estimate cannot be accurately estimated \emph{a priori}. 
Remarkably, these augmentations neither significantly burden the computational expense of the algorithm nor compromise its efficacy. We include convexity and convergence analysis and demonstrate our method's efficacy in several numerical experiments. 

\end{abstract}

\begin{keywords}
    Image reconstruction, 
    Bayesian inverse problems, 
    sparsity-promoting hierarchical Bayesian learning, 
    conditionally Gaussian priors, 
    (generalized) gamma hyper-priors, 
    convexity 
\end{keywords}

\begin{AMS} 
    62F15, 
    65F22, 
    65K10, 
    68Q25, 
    68U10 
\end{AMS}

\begin{Code}
    \url{https://github.com/jlindbloom/GeneralizedSparsitySolvers} 
\end{Code}

\begin{DOI}
\url{https://doi.org/10.1088/1361-6420/ada17f}
\end{DOI}

\section{Introduction}
\label{sec:intro}

Recovering a sparse parameter vector from indirect, incomplete, and noisy observations is a common yet challenging problem in a variety of applications. 
The task is often modeled as a linear inverse problem
\begin{equation}\label{eq:intro_IP}
	\mathbf{y} = F \mathbf{x} + \mathbf{e}, 
\end{equation}
where $\mathbf{y} \in \R^M$ is a vector of observations,  $\mathbf{x} \in \R^N$ symbolizes the unknown parameter vector, $F \in \R^{M \times N}$ is the known linear forward operator, and $\mathbf{e} \in \R^M$ corresponds to the noise component. 
Comprehensive discussions on inverse problems may be found in \cite{groetsch1993inverse,vogel2002computational,hansen2010discrete} and related references. 
In particular, \cref{eq:intro_IP} may be associated with signal or image reconstruction \cite{hansen2006deblurring,stark2013image}. 
If $F$ is ill-conditioned or if the data are significantly distorted by noise, then  \cref{eq:intro_IP} becomes ill-posed and pathologically hard to solve.

Prior knowledge about the otherwise unknown parameter vector $\mathbf{x}$ is often leveraged to overcome the associated challenges. 
In this regard, using a Bayesian approach \cite{kaipio2006statistical,stuart2010inverse,calvetti2023bayesian}, which models the parameter and observation vectors as random variables, is known to be highly successful. 
In a nutshell, the sought-after posterior distribution for the parameters of interest is characterized using Bayes' theorem, which connects the posterior density to the prior and likelihood densities.
The prior encodes information available on the parameters of interest before any data are observed. At the same time, the likelihood density incorporates the data model \cref{eq:intro_IP} and a stochastic description of the measurements. 
This investigation assumes that some linear transformation of the parameter vector, say $R \mathbf{x}$, is sparse. 
That is, most components of $R \mathbf{x}$ are approximately zero. 
For instance, $\mathbf{x}$ might correspond to the nodal values of a piecewise constant signal, in which case $R$ would be a discrete gradient operator. A particularly potent class of sparsity-promoting priors are those that can be decomposed into a conditional Gaussian prior and a generalized gamma hyper-prior. 
These have been proven successful in various applications as they are computationally convenient and often produce highly efficient inference algorithms.  See  \cite{calvetti2019hierachical,calvetti2020sparse,calvetti2020sparsity,calvetti2023bayesian} and \cite{Tipping2001SparseBL,wipf2004sparse,chantas2006bayesian,babacan2010sparse,glaubitz2022generalized,xiao2023sequential}, where the variance and precision of the conditionally Gaussian prior was equipped with a (generalized) gamma hyper-prior, respectively, and references therein.

Although recent advances in \cite{calvetti2024computationally} initiated the development of sampling strategies for sparsity-promoting hierarchical models, many algorithms still primarily focus on obtaining MAP estimates through the use of a block coordinate descent method, often referred to as the IAS algorithm. 
See \cite{calvetti2007gaussian,calvetti2015hierarchical,calvetti2019hierachical,calvetti2020sparse} and references therein. While here the MAP approach may be compared with other traditional total variation (TV) methods, the conditional Gaussian structure of the hierarchical model provides an efficient framework to later perform sample-based uncertainty quantification. The basic idea behind the IAS algorithm is to alternatingly update the parameters of primary interest $\mathbf{x}$ and the hyper-parameters $\boldsymbol{\theta}$, which encode the sparsity profile of $\mathbf{x}$, while keeping the other set of parameters fixed. 
Among the two updates, the one concerning $\mathbf{x}$ is computationally more demanding and involves the solution of a least squares problem. This cost can be mitigated by preconditioning strategies such as priorconditioning. This procedure typically presumes that the sparsifying transform $R$ is either invertible or possesses a trivial kernel, i.e., $\ker(R) = \{ \mathbf{0}_N \}$. 
In cases where $R$ has a non-trivial kernel, the conditional prior becomes improper and computational challenges arise. Building upon the convexity analysis provided in \cite{calvetti2020sparse}, hybrid solvers that switch between globally and locally convex models were developed in \cite{calvetti2020sparsity,si2024path}. 
Notably, the existing convexity analyses, although seamlessly applicable to the case of an invertible sparsifying transformation $R$, do not readily extend to noninvertible transformations.

\subsection*{Sparsifying transforms with non-trivial kernel}

This investigation expands the utility of the IAS algorithm by embracing more versatile sparsifying transformations with nontrivial kernels. 
Specifically, we provide convexity and convergence analyses for this generalization of the IAS algorithm. Furthermore, we provide details for its efficient computational implementation. This enhancement allows us to employ various discrete gradient operators not previously permitted by IAS methods, such as anisotropic and isotropic operators with Neumann boundary conditions, across multiple dimensions. Our approach also eliminates the need to impose artificial boundary conditions that may be neither available nor suitable. Finally, such generalization enables the incorporation of other sparsifying transforms, such as wavelet \cite{rioul1991wavelets} or polynomial annihilation \cite{archibald2005polynomial,archibald2016image} operators. 

\subsection*{Variable noise variances}

Further enriching our framework, we extend the IAS algorithm to treat the noise variance $\sigma^2$---an essential component of the data model \cref{eq:intro_IP}---as a random variable. 
This adaptation is critical in numerous applications where the noise variance is either imprecise or subject to fluctuations during the device's operational lifespan. 
Rather than merely representing the uncertainty about the noise variance, the corresponding random variable also encapsulates model discrepancies, a recurrent issue in almost all realistic scenarios. 
Notably, evidence from similar models indicates that employing a random variable can bolster results, even when the exact noise variance is known \cite{zhang2011clarify}. 
We derive an additional yet computationally efficient update step for the correspondingly adapted IAS algorithm by modeling the noise variance as generalized gamma-distributed. 
We also provide convexity and convergence analyses for the resulting IAS method.

\subsection*{Outline} 

We review the state of the art of the IAS framework in \Cref{sec:background}. 
In \Cref{sec:main_results} we generalize the IAS algorithm to accommodate sparsifying transforms with nontrivial kernels and unknown noise variances. Numerical experiments, including a computed tomography (CT) inverse problem, are presented in \Cref{sec:numerics}. Finally, \Cref{sec:summary} provides some concluding remarks.

\section{Preliminaries}
\label{sec:background} 

We first review the class of sparsity-promoting hierarchical models developed in \cite{calvetti2007gaussian,calvetti2009conditionally,calvetti2015hierarchical} and the IAS algorithm for their MAP estimation. 
Also see the more recent developments in \cite{calvetti2019hierachical,calvetti2020sparse,calvetti2020sparsity,glaubitz2024leveraging} and references therein. While this section primarily serves to review existing methods along with their properties,  \cref{thm:IAS_G_convergence_convex_case} and \cref{thm:IAS_G_limit_points} provide some new convergence results.

\subsection{Notation and nomenclature} 

We use $x_i$ to denote the $i$th component of $\mathbf{x} \in \mathbb{R}^N$, and $D_{\mathbf{x}}$ or $\operatorname{diag}(\mathbf{x})$ to denote the $N \times N$ diagonal matrix with $\mathbf{x}$ on its diagonal. When convenient, we denote a vector in terms of its entries as $\mathbf{x} = \operatorname{vec}(x_i)$. We write $\mathbb{R}^N_{+}$ and $\mathbb{R}_{++}^N$ for the nonnegative and positive orthants (containing the vectors with only nonnegative and positive entries), respectively.
For a given set $S$, we denote by $\operatorname{Int}(S)$ its interior and  by $\operatorname{Bd}(S)$ its boundary, along with its corresponding characteristic function $\delta_S$ with $\delta_S(\mathbf{x}) = 0$ if $\mathbf{x} \in S$ and $\delta_S(\mathbf{x}) = +\infty$ otherwise, as well as the indicator function $\mathbbm{1}_{S}$ with $\mathbbm{1}_{S}(\mathbf{x}) = 1$ if $\mathbf{x} \in S$ and $\mathbbm{1}_{S}(\mathbf{x}) = 0$ otherwise. We call a point $\mathbf{x}^* \in \mathbb{R}^N$ a stationary point of an extended real-valued function $\mathcal{F}:\mathbb{R}^N \to \mathbb{R} \cup \{ + \infty \}$ if $\mathbf{0}_N \in \partial \mathcal{F}(\mathbf{x}^*)$, where $\partial \mathcal{F}$ is the \emph{limiting subdifferential} or \emph{subdifferential} (see \cite{Mordukhovich2006}) of $\mathcal{F}$ at $\mathbf{x}^*$. If $\mathcal{F}$ is differentiable at a point $\mathbf{x}$, then we simply have $\partial \mathcal{F}(\mathbf{x}) = \{ \nabla \mathcal{F}(\mathbf{x}) \}$. We employ this definition of the subdifferential since, for our purposes, $\mathcal{F}$ may be nonconvex.

\subsection{The hierarchical Bayesian model}
\label{sub:background_model} 

Consider the linear data model \cref{eq:intro_IP} with independent and identically distributed (i.i.d.) zero-mean normal noise $\mathbf{e}$. Assume further that the parameter vector $\mathbf{x}$ is sparse. 
In a hierarchical Bayesian setting, this motivates the generative model  
\begin{equation}\label{eq:IAS_model} 
\begin{aligned}
	\mathbf{y} | \mathbf{x} 
		& \sim \mathcal{N}( F \mathbf{x}, \sigma^2 I ), \\ 
	\mathbf{x} | \boldsymbol{\theta} 
		& \sim \mathcal{N}(\mathbf{0}_N,D_{\boldsymbol{\theta}}), \\ 
	\theta_i 
		&\overset{\text{ind}}{\sim} \textrm{GG}(r,\beta,\vartheta_i), \quad i=1,\dots,N,   
\end{aligned}
\end{equation} 
for a noise variance parameter $\sigma^2$ and hyper-parameters $\boldsymbol{\theta} = [\theta_1,\dots,\theta_N]^T \in \mathbb{R}_{+}^N$. Here $ \textrm{GG}(r, \beta, \vartheta)$ denotes the \emph{generalized Gamma distribution} with density 
\begin{align}
    \pi(\theta) = \frac{|r|}{\Gamma(\beta)} \left( \frac{\theta}{\vartheta} \right)^{r \beta - 1} \exp\left( - \left( \frac{\theta}{\vartheta} \right) \right)^r \mathbbm{1}_{\mathbb{R}_{+}}(\theta),
\end{align}
defined for parameters $r \in \mathbb{R} \setminus \{ 0 \}$, $\beta > 0$, and $\vartheta > 0$. 
Following \cite{calvetti2007gaussian,glaubitz2022generalized}, the combination of a conditional Gaussian prior and a generalized gamma hyper-prior promoting sparsity can be understood as follows: 
The generalized gamma hyper-prior is centered at zero but allows for occasional outliers. 
A typical realization $\boldsymbol{\theta}$ from this distribution will have mostly small components, with some significantly larger than zero. 
If $\theta_i \approx 0$, then the conditionally Gaussian prior for the $i$th component of $\mathbf{x}$, which has distribution $\mathcal{N}(0, \theta_i)$,  favors $x_i \approx 0$ since such an $x_i$ has a higher probability. 
However, if $\theta_i$ is one of the few components significantly larger than zero, then $| x_i | \gg 0$ becomes more likely. 

By Bayes' theorem, the joint posterior density function is given by $\pi( \mathbf{x}, \boldsymbol{\theta} | \mathbf{y} ) 
		\propto \pi( \mathbf{y} | \mathbf{x} ) \, 
			\pi( \mathbf{x} | \boldsymbol{\theta} ) \,  
			\pi(\boldsymbol{\theta})$, i.e., the product of the likelihood, prior, and hyper-prior densities. 
According to \cref{eq:IAS_model}, the posterior density can be expressed as
\begin{equation}\label{eq:IAS_posterior} 
	\pi( \mathbf{x}, \boldsymbol{\theta} | \mathbf{y} ) 
		\propto \exp\left( - \frac{1}{2 \sigma^2} \| F \mathbf{x} - \mathbf{y} \|_2^2 - \frac{1}{2} \| D_{\boldsymbol{\theta}}^{-1/2} \mathbf{x} \|_2^2 - \sum_{i=1}^N \left( \frac{\theta_i}{\vartheta_i} \right)^r \right) \det( D_{\boldsymbol{\theta}} )^{r \beta - 3/2} \mathbbm{1}_{ \mathbb{R}_{+}^N}(\boldsymbol{\theta}).
\end{equation}

\subsection{MAP estimation and the IAS algorithm}
\label{sub:background_MAP}

We now address Bayesian inference for \cref{eq:IAS_model}. 
To this end, a common strategy is to solve for the \emph{MAP estimate} $(\mathbf{x}^{\MAP},\boldsymbol{\theta}^{\MAP})$ for given measurements $\mathbf{y}$, which is the maximizer of the joint posterior density \cref{eq:IAS_posterior}. 
Equivalently, the MAP estimate is the minimizer of the negative logarithm of the posterior, i.e.,  
\begin{equation}\label{eq:IAS_MAP_estimate}
	(\mathbf{x}^{\MAP},\boldsymbol{\theta}^{\MAP}) 
		= \argmin_{ \mathbf{x}, \boldsymbol{\theta} } \left\{ \mathcal{G}( \mathbf{x}, \boldsymbol{\theta} ) \right\}, 
\end{equation} 
with objective function (also called the \emph{Gibbs energy functional} or \emph{potential}) given by $\mathcal{G}( \mathbf{x}, \boldsymbol{\theta} ) = - \log \pi( \mathbf{x}, \boldsymbol{\theta} | \mathbf{y} )$. Substituting \cref{eq:IAS_posterior} into $\mathcal{G}$ yields  
\begin{equation}\label{eq:IAS_G}
	\mathcal{G}( \mathbf{x}, \boldsymbol{\theta} ) 
		= \frac{1}{2 \sigma^2} \| F \mathbf{x} - \mathbf{y} \|_2^2 + \frac{1}{2} \| D_{\boldsymbol{\theta}}^{-1/2} \mathbf{x}  \|_2^2 + \sum_{i=1}^N \left( \frac{\theta_i}{\vartheta_i} \right)^r - \sum_{i=1}^N ( r \beta - 3/2 ) \log( \theta_i ) + \delta_{\mathbb{R}_{+}^N}( \boldsymbol{\theta}) 
\end{equation}
up to constants that depend neither on $\mathbf{x}$ nor $\boldsymbol{\theta}$. 
Here, we treat $\mathcal{G}:\mathbb{R}^{2N} \to \mathbb{R}\cup\{+\infty\}$ as an extended real-valued function.
A prevalent algorithm to approximate the minimizer of $\mathcal{G}$, and therefore the MAP estimate $(\mathbf{x}^{\MAP},\boldsymbol{\theta}^{\MAP})$, is the so-called IAS algorithm \cite{calvetti2007gaussian,calvetti2015hierarchical,calvetti2019hierachical,calvetti2020sparse}. 
The IAS algorithm is a type of \emph{block-coordinate descent} method \cite{Bertsekas1995NonlinearP, wright2015coordinate,  beck2017first} that aims to minimize $\mathcal{G}$ by alternatingly minimizing $\mathbf{x}$ and $\boldsymbol{\theta}$. Such methods are often referred to as \emph{alternating minimization} or \emph{block Gauss-Seidel} methods.
Specifically, given an initial guess for the hyper-parameter vector $\boldsymbol{\theta}$, the IAS algorithm proceeds through a sequence of updates of the form 
\begin{equation}\label{eq:IAS}
    \boldsymbol{\theta}^{(k+1)} = \argmin_{\boldsymbol{\theta} } \left\{ \mathcal{G}(\mathbf{x}^{(k)},\boldsymbol{\theta}) \right\}, \quad 
    \mathbf{x}^{(k+1)} = \argmin_{\mathbf{x} } \left\{ \mathcal{G}(\mathbf{x},\boldsymbol{\theta}^{(k+1)}) \right\}, 
\end{equation}
until a convergence criterion is met. The IAS algorithm is motivated by the fact that the two sub-problems in \cref{eq:IAS} are easier to solve than the original optimization problem \cref{eq:IAS_MAP_estimate}.

\subsubsection*{Updating the hyper-parameters $\boldsymbol{\theta}$}

Updating the hyper-parameters $\boldsymbol{\theta}$ given $\mathbf{x}$ requires solving problems of the form $\boldsymbol{\theta}^\star = \argmin_{\boldsymbol{\theta}} \left\{ \mathcal{G}(\mathbf{x},\boldsymbol{\theta}) \right\}$. 
It was shown in \cite{calvetti2020sparse,calvetti2020sparsity} that the unique solution to the $\boldsymbol{\theta}$-update is given by
\begin{equation}\label{eq:IAS_update_beta_general} 
	\theta_i^\star = \vartheta_i \cdot \varphi\left( \frac{ | x_i | }{ \sqrt{\vartheta_i} } \right), \quad i=1,\dots,N,
\end{equation} 
where $\varphi$ is the solution to the initial value problem 
\begin{equation}\label{eq:IAS_update_beta_IVP}
	\varphi'(t) = \cfrac{2 t \varphi(t)}{ 2 r^2 \varphi(t)^{r + 1} + t^2 }, \quad \varphi(0) = (\eta/r)^{1/r},
\end{equation}
with $\eta = r \beta - 3/2$, assuming that either (i) $r < 0$ and $\eta < -\frac{3}{2}$ or (ii) $r > 0$ and $\eta > 0$. Upon ordering the components $|x_i|$ in increasing order, the updates in \cref{eq:IAS_update_beta_general} can be efficiently calculated by numerically solving \cref{eq:IAS_update_beta_IVP} only once. Moreover, for $r = \pm1$, the updates in \cref{eq:IAS_update_beta_general} admit a simple explicit solution formula  \cite{calvetti2020sparse,calvetti2020sparsity}. As a final note, since the right-hand side of  \cref{eq:IAS_update_beta_IVP} is nonnegative for $t \geq 0$, a lower bound for the optimal $\theta_i^\star$ is provided by
\begin{align} \label{eq:theta_lower}
    \theta_i^\star \geq \vartheta_i (\eta/r)^{1/r}.
\end{align}

\subsubsection*{Updating the parameter vector $\mathbf{x}$}

To update $\mathbf{x}$ given $\boldsymbol{\theta}$, we must solve $\mathbf{x}^\star = \argmin_{\mathbf{x}  } \left\{ \mathcal{G} (\mathbf{x},\boldsymbol{\theta}) \right\}$, which can be reduced to solving the quadratic optimization problem 
\begin{equation}\label{eq:IAS_x_update} 
	\mathbf{x}^\star 
		= \argmin_{\mathbf{x} \in \mathbb{R}^N} \left\{ \frac{1}{\sigma^2} \| F \mathbf{x} - \mathbf{y} \|_2^2 + \| D_{\boldsymbol{\theta}}^{-1/2} \mathbf{x} \|_2^2 \right\},
\end{equation}
where the objective is strictly convex in $\mathbf{x}$. Observe that the solution $\mathbf{x}^\star$ of \cref{eq:IAS_x_update} is the least squares solution of the overdetermined linear system
\begin{equation}\label{eq:IAS_x_update_least}
    \begin{bmatrix} \sigma^{-1} F \\ D_{\boldsymbol{\theta}}^{-1/2} \end{bmatrix} \mathbf{x} 
        = \begin{bmatrix} \sigma^{-1} \mathbf{y} \\ \mathbf{0}_N \end{bmatrix},
\end{equation}
 which in turn is the solution of  
\begin{equation}\label{eq:IAS_x_update_linear}
    \left( \sigma^{-2} F^T F + D_{\boldsymbol{\theta}}^{-1} \right) \mathbf{x} = \sigma^{-2} F^T \mathbf{y}.
\end{equation}  
Note that the coefficient matrix $\sigma^{-2} F^T F + D_{\boldsymbol{\theta}}^{-1}$ on the left-hand side of \cref{eq:IAS_x_update_linear} is symmetric and positive definite. 
Hence there is a unique solution to \cref{eq:IAS_x_update_linear} and, by extension, to the least squares problem \cref{eq:IAS_x_update_least}, as well as the quadratic optimization problem \cref{eq:IAS_x_update}. 

Various methods can be used to solve the $\mathbf{x}$-update. 
For sufficiently small $N$, employing direct methods to solve \cref{eq:IAS_x_update} at a cost of $\mathcal{O}(N^3)$ flops is reasonable. Iterative methods such as the conjugate gradient (CG) method \cite{saad2003iterative}, the conjugate gradient for least squares (CGLS) algorithm \cite{calvetti2018bayes,calvetti2020sparse} (see \cref{rem:CGLS} for more details), the gradient descent approach \cite{glaubitz2022generalized}, as well as preconditioned variants of these algorithms, are more appropriate for large $N$. 
\Cref{alg:IAS} summarizes the IAS algorithm.

\begin{algorithm}[h!]
\caption{The IAS algorithm}\label{alg:IAS} 
\begin{algorithmic}[1]
    \STATE{\textbf{Input:} Data $\mathbf{y}$, forward operator $F$, hyper-parameters $( r, \beta, \boldsymbol{\vartheta} )$, and initialization $\mathbf{x}^{(0)}$}
    \STATE{\textbf{Output:} Approximate MAP estimate $(\mathbf{x}^{\MAP},\boldsymbol{\theta}^{\MAP})$ for the joint posterior $\pi( \mathbf{x}, \boldsymbol{\theta} | \mathbf{y} )$ in \cref{eq:IAS_posterior}} 
    \REPEAT
        \STATE{Update the hyper-parameters $\boldsymbol{\theta}$ according to \cref{eq:IAS_update_beta_general}}
	\STATE{Update the parameter vector $\mathbf{x}$ according to \cref{eq:IAS_x_update}} 
    \UNTIL{convergence or the maximum number of iterations is reached}
\end{algorithmic}
\end{algorithm} 

\begin{remark}[The CGLS algorithm and priorconditioning]\label{rem:CGLS} 
The above $\mathbf{x}$-update can be obtained by direct application of the CGLS iterative method to \cref{eq:IAS_x_update_least}, with preconditioning used for enhanced efficiency. One such strategy, known as \emph{priorconditioning}, arises from making the change of variables $\mathbf{w} = D_{\boldsymbol{\theta}}^{-1/2} \mathbf{x}$ and corresponds to preconditioning by the conditional prior. In this case, \cref{eq:IAS_x_update_least} becomes 
    \begin{equation}\label{eq:IAS_x_update_least2}
        \begin{bmatrix} \sigma^{-1} F_{\boldsymbol{\theta}}  \\ I_N \end{bmatrix} \mathbf{w} 
            = \begin{bmatrix} \sigma^{-1} \mathbf{y} \\ \mathbf{0}_N \end{bmatrix}, 
    \end{equation} 
    where $F_{\boldsymbol \theta} = F D_{\boldsymbol{\theta}}^{1/2}$. Note that \cref{eq:IAS_x_update_least2} corresponds to putting the least squares problem \eqref{eq:IAS_x_update_least} into standard form \cite{Elden1977LeastSquares, Hansen2013Oblique},  and can be solved approximately for the optimal $\mathbf{w}^\star$ using the CGLS with the standard stopping criterion based on the relative residual norm. 
    The solution to the original problem is recovered as $\mathbf{x}^\star = D_{\boldsymbol{\theta}}^{1/2} \mathbf{w}^\star$. 
\end{remark}

\subsection{Existing convexity results}
\label{sub:background_convexity}

We now provide some results on the convexity of the objective function $\mathcal{G}(\mathbf{x}, \boldsymbol{\theta})$ and the convergence of the IAS algorithm in \cref{alg:IAS}. \cref{thm:previous_convexity_result} summarizes how the values of the hyper-prior parameters $r$, $\beta$, and $\vartheta$ affect the convexity properties of the objective function $\mathcal{G}$.
Originating in \cite{calvetti2020sparse}, these results were extended to multiple measurement vectors in \cite{glaubitz2024leveraging}. 

\begin{theorem}\label{thm:previous_convexity_result}
    Let $r \in \R \setminus\{ 0 \}$ and $\beta > 0$. 
    Furthermore, let $\mathcal{G}(\mathbf{x},\boldsymbol{\theta})$ be the objective function in \cref{eq:IAS_G} and let $\eta = r \beta - 3/2$. 
    \begin{enumerate}
	\item 
	If $r \geq 1$ and $\eta > 0$, then $\mathcal{G}(\mathbf{x},\boldsymbol{\theta})$ is globally strictly convex.

	\item 
	If $0 < r < 1$ and $\eta > 0$, or, if $r < 0$, then $\mathcal{G}(\mathbf{x},\boldsymbol{\theta})$ is locally convex at $(\mathbf{x}, \boldsymbol{\theta})$  provided that
	\begin{equation}
		\theta_i <  \vartheta_i \left( \cfrac{\eta}{r | r-1 |} \right)^{1/r}, \quad i=1,\ldots,N.
	\end{equation}
	
    \end{enumerate} 
	
\end{theorem}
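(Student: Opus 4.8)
The plan is to exploit the separable structure of $\mathcal{G}$ over the coordinate pairs $(x_i,\theta_i)$ and reduce the entire claim to a two-by-two Hessian computation. First I would rewrite the objective in \cref{eq:IAS_G} as
\begin{equation*}
  \mathcal{G}(\mathbf{x},\boldsymbol{\theta}) = \frac{1}{2\sigma^2}\|F\mathbf{x}-\mathbf{y}\|_2^2 + \sum_{i=1}^N g_i(x_i,\theta_i), \qquad g_i(x_i,\theta_i) = \frac{x_i^2}{2\theta_i} + \left(\frac{\theta_i}{\vartheta_i}\right)^r - \eta\log\theta_i,
\end{equation*}
understood on the open positive orthant $\theta_i>0$, where the characteristic function $\delta_{\mathbb{R}_{+}^N}$ vanishes and the logarithm is finite. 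The leading quadratic is convex in $\mathbf{x}$ because its Hessian $\sigma^{-2}F^TF$ is positive semidefinite, and it does not couple to the $\boldsymbol{\theta}$-block. Consequently, convexity of $\mathcal{G}$ on its (convex) domain follows once each $g_i$ is convex in its own pair $(x_i,\theta_i)$, and strict convexity of $\mathcal{G}$ follows once each $g_i$ is strictly convex, since a separable sum of block-strictly-convex functions is strictly convex jointly.

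The core step is therefore to analyze $\nabla^2 g_i$. I would compute
\begin{equation*}
  \nabla^2 g_i = \begin{pmatrix} \theta_i^{-1} & -x_i\theta_i^{-2} \\ -x_i\theta_i^{-2} & x_i^2\theta_i^{-3} + r(r-1)\vartheta_i^{-r}\theta_i^{r-2} + \eta\theta_i^{-2} \end{pmatrix}.
\end{equation*}
Since the $(1,1)$ entry $\theta_i^{-1}$ is strictly positive for $\theta_i>0$, the standard criterion for a symmetric two-by-two matrix reduces positive (semi)definiteness to the sign of the determinant alone, the $(2,2)$ entry then being automatically nonnegative. A direct calculation shows that the $x_i^2$ contributions cancel, leaving
\begin{equation*}
  \det \nabla^2 g_i = \frac{1}{\theta_i^{3}}\left( r(r-1)\left(\frac{\theta_i}{\vartheta_i}\right)^r + \eta \right),
\end{equation*}
so that $g_i$ has positive (semi)definite Hessian at $(x_i,\theta_i)$ precisely when $r(r-1)(\theta_i/\vartheta_i)^r + \eta > 0$ (resp. $\ge 0$), independently of $x_i$.

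It then remains to translate this sign condition into the parameter regimes of the statement. When $r\geq 1$ and $\eta>0$, the product $r(r-1)\ge 0$ together with $(\theta_i/\vartheta_i)^r>0$ makes the bracket strictly positive for every $\theta_i>0$; each $g_i$ is then strictly convex everywhere, which by the separable argument yields global strict convexity of $\mathcal{G}$. When $0<r<1$ (so $r(r-1)<0$) with $\eta>0$, or when $r<0$ (where $\beta>0$ forces $\eta=r\beta-3/2<0$ while $r(r-1)>0$), the bracket is nonnegative only on a sublevel set of $(\theta_i/\vartheta_i)^r$; solving $r(r-1)(\theta_i/\vartheta_i)^r + \eta \ge 0$ for $\theta_i$, using $r(r-1)=-\,r|r-1|$ in the first case and the identity $\eta/(r|r-1|)=-\eta/(r(r-1))>0$ in the second, produces in both cases the common threshold $\theta_i < \vartheta_i(\eta/(r|r-1|))^{1/r}$, delivering local convexity at any $(\mathbf{x},\boldsymbol{\theta})$ satisfying it. The main obstacle I anticipate is precisely this final bookkeeping: because $t\mapsto t^r$ and $t\mapsto t^{1/r}$ are decreasing when $r<0$ but increasing when $0<r<1$, the inequality direction flips between regimes, and one must verify that the single expression $(\eta/(r|r-1|))^{1/r}$ correctly captures both cases and remains a positive, well-defined real number. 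A secondary technical point is justifying that joint strict convexity of the separable sum (not merely blockwise strict convexity) holds, and that restricting to the open orthant is legitimate since $\mathcal{G}\equiv+\infty$ on its boundary.
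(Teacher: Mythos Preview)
Your approach is correct and essentially matches the paper's: both reduce the question to positivity of the Hessian of the prior part, and both arrive at the identical scalar condition $r(r-1)(\theta_i/\vartheta_i)^r+\eta>0$ via the cancellation of the $x_i^2$ contribution. The only cosmetic difference is that the paper writes out the full quadratic form $\mathbf{w}^T H\,\mathbf{w}$ and completes the square as $\sum_i \theta_i^{-1}(u_i-\theta_i^{-1}v_i x_i)^2+\sum_i v_i^2\bigl(\eta\theta_i^{-2}+r(r-1)\vartheta_i^{-r}\theta_i^{r-2}\bigr)$, whereas you invoke the $2\times 2$ leading-minor criterion---these are equivalent since for a $2\times 2$ block with $(1,1)$ entry $a>0$, the completed-square remainder is exactly $\det/a$.
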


\subsection{New convergence results}
\label{sub:background_convergence}

\cref{thm:previous_convexity_result} has implications for what can be expected from the output of \cref{alg:IAS}. In the case of a strictly convex model with $r \geq 1$ and $\eta > 0$,  $\mathcal{G}$ is globally strictly convex. 
For the subcase of $r = 1$, it was shown in \cite{calvetti2019hierachical} that \cref{alg:IAS} indeed converges to the global minimizer at a rate that is linear on the support of $\mathbf{x}$ and quadratic off the support. \cref{thm:IAS_G_convergence_convex_case} provides a new general convergence result in the strictly convex case for $r \geq 1$ and $\eta > 0$, which implies that the IAS algorithm is guaranteed to converge to the unique minimizer of $\mathcal{G}$. 

\begin{theorem}\label{thm:IAS_G_convergence_convex_case}
    Let $\mathcal{G}$ denote the objective in \cref{eq:IAS_G}, and let $\{ (\mathbf{x}^{(k)}, \boldsymbol{\theta}^{(k)}) \}$ denote the sequence of iterates of the IAS algorithm in \cref{alg:IAS}. If $r \geq 1$ and $\eta > 0$, then $\{ (\mathbf{x}^{(k)}, \boldsymbol{\theta}^{(k)}) \} \to (\mathbf{x}^{\MAP}, \boldsymbol{\theta}^{\MAP})$ as $k \to \infty$, where $(\mathbf{x}^{\MAP}, \boldsymbol{\theta}^{\MAP})$ is the unique global minimizer of $\mathcal{G}$ corresponding to \cref{eq:IAS_MAP_estimate}.
\end{theorem}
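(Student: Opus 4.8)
The plan is to treat the IAS iteration \cref{eq:IAS} as a two-block alternating minimization (block Gauss--Seidel) scheme applied to the strictly convex potential $\mathcal{G}$, and then invoke the classical convergence theory for such methods \cite{Bertsekas1995NonlinearP,wright2015coordinate,beck2017first}. Under the hypothesis $r \geq 1$ and $\eta > 0$, part~1 of \cref{thm:previous_convexity_result} guarantees that $\mathcal{G}$ is globally strictly convex, so it possesses \emph{at most} one global minimizer; the first task is therefore to show a minimizer exists and that the IAS iterates are well defined and bounded, and the second is to promote the weak conclusion ``every limit point is stationary'' to ``the whole sequence converges'' using strict convexity.

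First I would establish the three structural ingredients required by the alternating-minimization framework. (i) \emph{Well-posedness of each block update.} The $\boldsymbol{\theta}$-update has the unique closed-form solution \cref{eq:IAS_update_beta_general}, while the $\mathbf{x}$-update reduces to the symmetric positive definite linear system \cref{eq:IAS_x_update_linear}, whose solution is unique; hence each subproblem in \cref{eq:IAS} attains a unique minimizer. (ii) \emph{Iterates stay in the smooth interior.} This is the delicate point, since $\mathcal{G}$ is nonsmooth on $\operatorname{Bd}(\mathbb{R}_+^N)$ and blows up as any $\theta_i \to 0^+$ through both the term $\tfrac{1}{2}\|D_{\boldsymbol{\theta}}^{-1/2}\mathbf{x}\|_2^2$ and the $-\log$ terms. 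However, the lower bound \cref{eq:theta_lower}, which reads $\theta_i^\star \geq \vartheta_i(\eta/r)^{1/r} > 0$ when $r \geq 1$ and $\eta > 0$, shows that for all $k \geq 1$ the components $\theta_i^{(k)}$ are bounded uniformly away from zero. Consequently, after the first step the iterates live in an open set on which $\mathcal{G}$ is $C^\infty$ and the inequality constraint $\boldsymbol{\theta} \in \mathbb{R}_+^N$ is inactive. (iii) \emph{Coercivity.} I would verify that $\mathcal{G}$ has compact sublevel sets: the data-fit together with the $(\theta_i/\vartheta_i)^r$ terms dominate the $-(r\beta-3/2)\log\theta_i$ term as $\|\mathbf{x}\|,\|\boldsymbol{\theta}\| \to \infty$, so $\mathcal{G}(\mathbf{x},\boldsymbol{\theta}) \to +\infty$, which simultaneously yields existence of the global minimizer $(\mathbf{x}^{\MAP},\boldsymbol{\theta}^{\MAP})$ and confinement of the iterates.

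With these ingredients in hand, the argument closes as follows. Because each step minimizes $\mathcal{G}$ exactly over one block, the objective values $\{\mathcal{G}(\mathbf{x}^{(k)},\boldsymbol{\theta}^{(k)})\}$ are nonincreasing; combined with coercivity, the sequence $\{(\mathbf{x}^{(k)},\boldsymbol{\theta}^{(k)})\}$ stays in a compact sublevel set bounded away from $\operatorname{Bd}(\mathbb{R}_+^N)$ and therefore admits limit points. Since $\mathcal{G}$ restricted to this region is continuously differentiable and each block minimizer is unique, the two-block Gauss--Seidel convergence theorem implies that every limit point is a stationary point of $\mathcal{G}$, i.e., satisfies $\mathbf{0}_{2N} \in \partial \mathcal{G}$. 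By strict convexity the only such stationary point is the unique global minimizer $(\mathbf{x}^{\MAP},\boldsymbol{\theta}^{\MAP})$. A bounded sequence all of whose limit points coincide with a single point converges to that point, yielding $\{(\mathbf{x}^{(k)},\boldsymbol{\theta}^{(k)})\} \to (\mathbf{x}^{\MAP},\boldsymbol{\theta}^{\MAP})$.

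I expect the main obstacle to be rigorously handling the boundary of $\mathbb{R}_+^N$: the off-the-shelf alternating-minimization theorems presume a smooth objective (or a smooth part plus a separable nonsmooth part), whereas $\mathcal{G}$ fails to be differentiable --- indeed is infinite --- on $\{\theta_i = 0\}$. The lower bound \cref{eq:theta_lower} is precisely what rescues the argument by confining the iterates to the smooth interior, so the crux is to combine this bound with coercivity to certify that the hypotheses of the block coordinate descent theorem genuinely hold along the generated sequence, rather than merely on an abstract sublevel set.
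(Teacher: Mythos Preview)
Your proposal is correct and follows essentially the same route as the paper: invoke strict convexity from \cref{thm:previous_convexity_result}, use the lower bound \cref{eq:theta_lower} to keep the iterates in the smooth interior, establish boundedness via coercivity, show every limit point is the unique stationary point, and conclude. The only minor tactical difference is in the last step---the paper promotes ``every limit point is stationary'' to full convergence via a strict-descent-plus-contradiction argument on objective values, whereas you use the (equivalent and arguably cleaner) fact that a bounded sequence with a single limit point must converge to it.
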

\begin{proof}
    See \cref{app:convergence_part_one}.
\end{proof}

While global convexity simplifies the MAP estimate calculation, there are compelling reasons to choose hyper-parameters $(r, \beta, \vartheta)$ that lead to nonconvex, strongly sparsity-promoting models. 
 In particular, a deviation from the global convexity of the objective function can reinforce the sparsity of the minimizer (e.g., see \cite{Wen2018SurveyNonconvex}). However, a nonconvex objective $\mathcal{G}$ may cause spurious local minima to develop, and \cref{alg:IAS} may get stuck in one of these. Since to our knowledge a convergence result for \cref{alg:IAS} in the nonconvex regime has not previously been presented, we do so in \cref{thm:IAS_G_limit_points}.

\begin{theorem}\label{thm:IAS_G_limit_points}
    Let $\mathcal{G}$ denote the objective in \cref{eq:IAS_G}, and let $\{ (\mathbf{x}^{(k)}, \boldsymbol{\theta}^{(k)}) \}$ denote the sequence of iterates of the IAS algorithm in \cref{alg:IAS}. Then $\{ (\mathbf{x}^{(k)}, \boldsymbol{\theta}^{(k)}) \}$ is bounded, and any limit point of $\{ \mathbf{x}^{(k)}, \boldsymbol{\theta}^{(k)} \}$ is a stationary point of $\mathcal{G}$.
\end{theorem}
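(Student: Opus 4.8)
The plan is to establish this as a standard convergence result for block-coordinate descent applied to a coercive, lower-semicontinuous objective. The statement has two parts: boundedness of the iterate sequence, and stationarity of every limit point. I would treat these in order, since boundedness is the prerequisite that guarantees limit points exist in the first place.

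First I would prove boundedness. The key observation is that the IAS updates in \cref{eq:IAS} are exact block minimizations, so the objective is monotonically nonincreasing along the sequence: $\mathcal{G}(\mathbf{x}^{(k+1)}, \boldsymbol{\theta}^{(k+1)}) \leq \mathcal{G}(\mathbf{x}^{(k)}, \boldsymbol{\theta}^{(k+1)}) \leq \mathcal{G}(\mathbf{x}^{(k)}, \boldsymbol{\theta}^{(k)})$. Hence all iterates lie in the sublevel set $\{ (\mathbf{x}, \boldsymbol{\theta}) : \mathcal{G}(\mathbf{x}, \boldsymbol{\theta}) \leq \mathcal{G}(\mathbf{x}^{(0)}, \boldsymbol{\theta}^{(0)}) \}$. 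I would then argue this sublevel set is bounded by showing $\mathcal{G}$ is coercive on $\mathbb{R}^N \times \mathbb{R}_{+}^N$. Inspecting \cref{eq:IAS_G}, the term $\sum_i (\theta_i/\vartheta_i)^r$ (for $r>0$) or the log-barrier combined with the indicator $\delta_{\mathbb{R}_{+}^N}$ controls the $\boldsymbol{\theta}$ components, the fidelity and prior terms control $\mathbf{x}$, and I must check the two regimes $r>0$ and $r<0$ separately. The lower bound \cref{eq:theta_lower} is useful here, as it bounds the $\theta_i$ away from zero, which prevents the $\| D_{\boldsymbol{\theta}}^{-1/2} \mathbf{x} \|_2^2$ term from degenerating and confirms the iterates stay in the open positive orthant. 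Boundedness of the sequence then follows from the Bolzano--Weierstrass theorem.

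Next I would address stationarity of limit points. Let $(\mathbf{x}^*, \boldsymbol{\theta}^*)$ be a limit point, so some subsequence $(\mathbf{x}^{(k_j)}, \boldsymbol{\theta}^{(k_j)}) \to (\mathbf{x}^*, \boldsymbol{\theta}^*)$. Since $\mathcal{G}$ is monotonically nonincreasing and bounded below, the successive differences $\mathcal{G}(\mathbf{x}^{(k)}, \boldsymbol{\theta}^{(k)}) - \mathcal{G}(\mathbf{x}^{(k+1)}, \boldsymbol{\theta}^{(k+1)})$ are summable and hence tend to zero; this forces the block-update decrements to vanish asymptotically. The standard route is then to exploit that each block update is an exact minimizer, yielding block-optimality conditions $\mathbf{0} \in \partial_{\boldsymbol{\theta}} \mathcal{G}(\mathbf{x}^{(k)}, \boldsymbol{\theta}^{(k+1)})$ and $\mathbf{0} = \nabla_{\mathbf{x}} \mathcal{G}(\mathbf{x}^{(k+1)}, \boldsymbol{\theta}^{(k+1)})$ (the latter since the $\mathbf{x}$-subproblem is smooth and strictly convex per \cref{eq:IAS_x_update}). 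Passing to the limit along the subsequence and invoking outer semicontinuity (closedness) of the limiting subdifferential, together with continuity of the gradient of the smooth part and the fact that the consecutive iterates share the same limit, I would assemble $\mathbf{0}_{2N} \in \partial \mathcal{G}(\mathbf{x}^*, \boldsymbol{\theta}^*)$.

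The main obstacle will be the passage to the limit for the $\boldsymbol{\theta}$-block, because $\mathcal{G}$ is nonconvex in that regime and nonsmooth through the indicator $\delta_{\mathbb{R}_{+}^N}$ and the log term. I must verify that the limit point $\boldsymbol{\theta}^*$ lies in the open orthant $\mathbb{R}_{++}^N$ rather than on its boundary, so that $\mathcal{G}$ is actually smooth in $\boldsymbol{\theta}$ near $\boldsymbol{\theta}^*$ and the subdifferential calculus is unambiguous; the uniform lower bound \cref{eq:theta_lower} is exactly what guarantees this. A secondary subtlety is that, unlike in two-block problems where coordinatewise stationarity immediately gives joint stationarity, I must confirm that the two partial optimality conditions combine into membership in the full limiting subdifferential $\partial \mathcal{G}$; this is legitimate because near $(\mathbf{x}^*, \boldsymbol{\theta}^*)$ the objective is a sum of a smooth function and the separable indicator, so the subdifferential splits as a product and the gradient condition in $\mathbf{x}$ together with the stationarity condition in $\boldsymbol{\theta}$ yields the claim.
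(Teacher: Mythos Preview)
Your proposal is correct and arrives at the same conclusion, but the paper takes a somewhat different organizational route. The paper's device is to introduce an auxiliary function $\mathcal{H}(\mathbf{x},\boldsymbol{\theta}) = \mathcal{G}(\mathbf{x},\boldsymbol{\theta}) + \delta_{[\varepsilon,+\infty)^N}(\boldsymbol{\theta})$ with $\varepsilon = \vartheta_{\min}(\eta/r)^{1/r}$, whose domain is \emph{closed}; the point is that standard block-coordinate descent convergence results (\`a la Bertsekas) require a closed domain, which $\mathcal{G}$ lacks. After showing via two preparatory lemmas that (i) the IAS iterates land in $[\varepsilon,+\infty)^N$ after one step so $\mathcal{H}=\mathcal{G}$ along the sequence, and (ii) the coordinate-wise minimizers, local minimizers, and stationary points of $\mathcal{H}$ and $\mathcal{G}$ coincide, the paper simply invokes the standard BCD result for $\mathcal{H}$ (proper, closed, coercive, unique block minimizers) to conclude. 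Your approach instead works directly with $\mathcal{G}$: you use the same uniform lower bound \cref{eq:theta_lower} to keep iterates and limit points in the smooth interior, then pass to the limit in the block first-order conditions by hand. This is more self-contained but forces you to deal explicitly with the ``consecutive iterates share the same limit'' issue, which you correctly flag; it is resolvable via uniqueness and continuity of the $\boldsymbol{\theta}$-update map together with the vanishing objective decrement, but the paper sidesteps this entirely by packaging everything into the auxiliary $\mathcal{H}$.
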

\begin{proof}
    See \cref{app:convergence_part_one}.
\end{proof}

Note that \cref{thm:IAS_G_limit_points} is a weak convergence result in the sense that it does not guarantee that the sequence of IAS iterates tends to a stationary point of $\mathcal{G}$. 
A stronger result can be obtained by applying abstract convergence results for descent methods for nonconvex problems (e.g., see \cite{Attouch2013Convergence}). 
Specifically, suppose that the objective function is a Kurdyka-Łojasiewicz (KL) function. 
In that case, descent methods that satisfy certain sufficient decrease, relative error, and continuity conditions are guaranteed to produce stationary points of the objective. 
For example, the IAS method for the MAP estimation under a horseshoe prior developed in \cite{Dong2023Horshoe} is shown to produce a stationary point for its associated objective function using results from \cite{Attouch2013Convergence}.

To help avoid premature termination at a local but globally sub-optimal minimizer, \cite{calvetti2020sparsity} proposed hybrid versions of the IAS algorithm. These variations initially utilize the global convergence associated with gamma hyper-priors ($r=1$) to approach the vicinity of the unique minimizer before switching to a generalized gamma hyper-prior with $r < 1$ to promote greater sparsity in the solution. Following a similar philosophy, \cite{si2024path} applied path-following methods to develop a variant of the hybrid method wherein the hyper-parameters are continuously varied along a path in the hyper-parameter space.

\begin{remark}[Promoting sparsity under a linear transformation]\label{rem:transform_sparsity} 
Until now our description of hierarchical models for promoting sparsity and the IAS method for their MAP estimation has assumed the desire to promote sparsity in the parameter vector $\mathbf{x}$. 
However, oftentimes one does not wish to promote sparsity in $\mathbf{x}$ per se, but rather in some linear transformation $R \mathbf{x}$ with $R \in \mathbb{R}^{k \times n}$. Under the trivial kernel assumption that $\ker(R) = \{ \mathbf{0}_N \}$,  modifying both the hierarchical model in \Cref{sub:background_model} and its MAP estimation procedure via the IAS algorithm in \Cref{sub:background_MAP} to accommodate a sparsifying transformation $R$ is straightforward. For example, the methods in \cite{calvetti2020sparse, calvetti2020sparsity, si2024path, Uribe2022, Dong2023Horshoe} have considered taking $R$ to be a discrete-gradient operator with a zero boundary condition.  The convexity result of \cite{calvetti2020sparse} (\cref{thm:previous_convexity_result} presented in \Cref{sub:background_convexity}) no longer applies to the IAS method, however, as it was only proven for the case $R = I_N$. This also affects the convergence results. Moreover, the procedure and its analysis in the general case $\ker(R) \neq \{ \mathbf{0}_N \}$ remain mostly unexplored. Our results in \Cref{sec:main_results} serve to fill this gap.  
\end{remark}
\section{Generalized sparsity-promoting solvers}\label{sec:main_results}
The pre-existing IAS method (see \Cref{sec:background}) requires that the sparsifying transformation $R$ satisfies a trivial kernel condition $\ker(R) = \{ \mathbf{0}_N \}$ and that the signal noise variance is known \emph{a priori}. We now present a generalization of the IAS method that is able both to accommodate general sparsifying transformations $R \in \mathbb{R}^{K \times N}$ as well as  learn the noise variance $\sigma^2$ from the data.

We begin by imposing the weaker assumption that $R$ satisfies the common kernel condition 
\begin{align}\label{eq:common_kernel_condition}
    \ker(F) \cap \ker(R) = \{ \mathbf{0}_N \}
\end{align}
with respect to forward operator $F$ in \eqref{eq:intro_IP}. 
This immediately holds for $\ker(R) = \{ \mathbf{0}_N \}$ and is readily satisfied for a wide range of sparsifying transforms. In particular, our generalization to transformations $R$ with non-trivial kernels permits  various  discrete gradient operators not previously allowed, including both anisotropic and isotropic ones, across multiple dimensions. Furthermore, in satisfying this weaker assumption we avoid having to impose otherwise required artificial boundary conditions, such as Aristotelian boundary conditions \cite{Calvetti2006Aristotelian, calvetti2023bayesian}, which may be difficult to implement or are not consistent with the solution. Our generalization also enables the incorporation of other sparsifying transforms, such as discrete wavelet transformations \cite{rioul1991wavelets} and polynomial annihilation operators \cite{archibald2005polynomial,archibald2016image}. Even beyond the scope of the hierarchical models considered here, our treatment of general $R$ could be extended to other flavors of sparsity-promoting hierarchical models, such as those employing horseshoe priors \cite{Uribe2022, Dong2023Horshoe}. 

To remove the requirement that the noise variance $\sigma^2$ is known \emph{a priori}, we opt to treat the noise variance as a random variable endowed with a generalized gamma hyper-prior. 
This allows the noise variance to become a parameter that is learned during the inference procedure. 
Moreover, in addition to representing the uncertainty about the noise variance, the corresponding random variable in our new approach also encapsulates model discrepancies, which is a recurrent issue in almost all realistic scenarios. 
We also note that evidence from similar models indicates that employing a random variable can bolster results, even when the exact noise variance is known \cite{zhang2011clarify}.

\subsection{The hierarchical Bayesian model}
\label{sub:main_noise_model}

Consider the linear data model \cref{eq:intro_IP} with $\mathbf{e} \sim \mathcal{N}(\mathbf{0},\nu I)$. 
We now assume that the transformed parameter vector $R \mathbf{x}$ with $R \in \R^{K \times N}$ is sparse, and we will treat the noise variance $\nu$ as a random variable. This motivates the hierarchical model
\begin{equation}\label{eq:IAS_model_noise_variance} 
\begin{aligned}
	\mathbf{y} | \mathbf{x}, \nu 
		& \sim \mathcal{N}( F \mathbf{x}, \nu I ), \\ 
	R \mathbf{x} | \boldsymbol{\theta} 
		& \sim \mathcal{N}(\mathbf{0}_K ,D_{\boldsymbol{\theta}}), \\  
        \nu 
            & \sim  \textrm{GG}(\tilde{r}, \tilde{\beta}, \tilde{\vartheta}), \\ 
	\theta_i 
		&\overset{\text{ind}}{\sim}  \textrm{GG}(r, \beta, \vartheta_i ), \quad i=1,\ldots,K.
\end{aligned}
\end{equation}
Observe that \cref{eq:IAS_model_noise_variance} and \cref{eq:IAS_model} differ in several ways: 
(1) the likelihood density is now a conditionally Gaussian distribution, instead of a Gaussian distribution conditioned on the generalized gamma distributed variance parameter $\nu$; 
(2) the transformed parameter $R \mathbf{x}$ --rather than $\mathbf{x}$ itself-- conditioned on $\boldsymbol{\theta}$ follows a zero-mean normal distribution; 
(3) and there are now $K$ instead of $N$ hyper-parameters $\theta_i$, where $K$ is the number of rows of the sparsifying transform $R$.
We use a generalized gamma distribution for $\nu$ as this allows us to derive an update rule similar to that of $\boldsymbol{\theta}$, and because it encompasses many of the uninformative variance hyper-priors employed in the literature. As before, Bayes' theorem yields the joint posterior density function for the model parameters as $\pi( \mathbf{x}, \boldsymbol{\theta}, \nu | \mathbf{y} ) \propto \pi( \mathbf{y} | \mathbf{x}, \nu ) \pi( \mathbf{x} | \boldsymbol{\theta} ) \pi(\boldsymbol{\theta}) \pi(\nu)$.

\subsection{MAP estimation and the IAS algorithm}
\label{sub:main_noise_MAP}

Analogous to what follows \cref{eq:IAS_posterior}, the estimation for the hierarchical model  \cref{eq:IAS_model_noise_variance} corresponds to minimizing the objective function
\begin{equation}\label{eq:IAS_G_with_noise_variance} 
\begin{aligned}
    \mathcal{G}( \mathbf{x}, \boldsymbol{\theta}, \nu ) 
	= \ & \frac{1}{2 \nu} \| F \mathbf{x} - \mathbf{y} \|_2^2 
            + \frac{1}{2} \| D_{\boldsymbol{\theta}}^{-1/2} R \mathbf{x} \|_2^2 
            + \left( \frac{\nu}{\tilde{\vartheta}} \right)^{\tilde{r}}  
            + \sum_{i=1}^K \left( \frac{\theta_i}{\vartheta_i} \right)^{r} \\ 
            & - \left( \tilde{r} \tilde{\beta} - [M+2]/2 \right) \log( \nu )
            - \left( r \beta - 3/2 \right) \sum_{i=1}^K \log( \theta_i ) + \delta_{\mathbb{R}_{+}^{K+1}}(\boldsymbol{\theta}, \nu). 
\end{aligned} 
\end{equation} 

In contrast to \eqref{eq:IAS}, our generalized IAS algorithm now proceeds through a sequence of updates of the parameters $\boldsymbol{\theta}, \mathbf{x}, \nu$, until a convergence criterion is met. 
The new update rules for $\boldsymbol{\theta}$ and $\nu$ are similar to the update rule for $\boldsymbol{\theta}$ in \cref{eq:IAS}. 
The new update rule for $\mathbf{x}$ is also comparable, however, with the caveat that employing an analogue of the priorconditioning technique described in \cref{rem:CGLS} leads to a more complicated algorithm.

\subsubsection{Updating the hyper-parameters} 
\label{sec:updatehyper}

Similar to before, minimizing $\mathcal{G}$ in \cref{eq:IAS_G_with_noise_variance} for $\boldsymbol{\theta} \in \mathbb{R}_{+}^K$ with $\mathbf{x}$ and $\nu$ held fixed yields the update rule  
\begin{equation}\label{eq:IAS_update_beta_general_with_transformation} \theta_i^\star = \vartheta_i \cdot \varphi\left( \frac{ | [R \mathbf{x}]_i | }{ \sqrt{\vartheta_i} } \right), \quad i=1,\dots,K,
\end{equation}
where $\varphi$ is the solution to the initial value problem \cref{eq:IAS_update_beta_IVP} with $\eta = r \beta - 3/2$, assuming that either (i) $r < 0$ and $\eta < -\frac{3}{2}$ or (ii) $r > 0$ and $\eta > 0$. Moreover, the ODE has an analytic solution for certain cases, including  $r = \pm 1$, meaning that  \cref{eq:IAS_update_beta_general_with_transformation} can be obtained using a simple analytic formula.

\subsubsection{Updating the unknown noise variance $\nu$}\label{subsub:noise_variance}

Minimizing $\mathcal{G}$ in \cref{eq:IAS_G_with_noise_variance} for $\boldsymbol{\nu} \in \mathbb{R}_{+}$ with fixed $\mathbf{x}$ and $\boldsymbol{\theta}$ is equivalent to solving 
\begin{equation}\label{eq:IAS_update_nu}
    \nu^\star = \argmin_{\nu \geq 0} \left\{
        \frac{1}{2 \nu} \| F \mathbf{x} - \mathbf{y} \|_2^2 
        + \left( \frac{\nu}{\tilde{\vartheta}} \right)^{\tilde{r}} 
        - \tilde{\eta} \log( \nu )
    \right\}
\end{equation}
with $\tilde{\eta} = \tilde{r} \tilde{\beta} - (M+2)/2$. 
Following the arguments in \cite{calvetti2020sparse,calvetti2020sparsity}, the unique solution to \cref{eq:IAS_update_nu} is 
\begin{equation}\label{eq:IAS_update_nu2}
    \nu^\star 
        = \tilde{\varphi} \cdot \psi\left( 
            \frac{\| F \mathbf{x} - \mathbf{y} \|_2}{\tilde{\vartheta}^{1/2}} 
        \right),
\end{equation}
where $\psi$ is the solution to the initial value problem 
\begin{equation}\label{eq:IAS_update_nu_IVP}
    \psi'(t) = \frac{2 t \psi(t)}{ 2 \tilde{r}^2 \psi(t)^{\tilde{r} + 1} + t^2 }, 
    \quad \psi(0) = \left( \frac{\tilde{\eta}}{\tilde{r}} \right)^{ 1/\tilde{r} },
\end{equation}
assuming that either (i) $\tilde{r} < 0$ and $\tilde{\eta} < -(M+2)/2$ or (ii) $\tilde{r} > 0$ and $\tilde{\eta} > 0$. 
Finally, as before, \cref{eq:IAS_update_nu} becomes a quadratic problem in $\nu$ that admits a simple explicit solution formula when $r = \pm 1$.

\subsubsection{Updating the parameter vector $\mathbf{x}$}\label{sub:main_x_update}

Similarly to before, minimizing $\mathcal{G}$ in \cref{eq:IAS_G_with_noise_variance} for $\boldsymbol{\mathbf{x}} \in \mathbb{R}^N$ with fixed $\boldsymbol{\theta}$ and $\boldsymbol{\nu}$ reduces to solving the quadratic optimization problem 
\begin{equation}\label{eq:IAS_x_update_with_transformation} 
	\mathbf{x}^\star 
		= \argmin_{\mathbf{x} \in \mathbb{R}^N} \left\{ \frac{1}{\nu} \| F \mathbf{x} - \mathbf{y} \|_2^2 + \| D_{\boldsymbol{\theta}}^{-1/2} R \mathbf{x} \|_2^2 \right\},
\end{equation}
which is equivalent to solving for the least squares solution $\mathbf{x}^\star$  of the overdetermined linear system 
\begin{equation}\label{eq:IAS_x_update_least_generalized}
    \begin{bmatrix} \nu^{-1/2} F \\ D_{\boldsymbol{\theta}}^{-1/2} R \end{bmatrix} \mathbf{x} 
        = \begin{bmatrix} \nu^{-1/2} \mathbf{y} \\ \mathbf{0}_K \end{bmatrix}
\end{equation} 
or solving the regular linear system
\begin{equation}\label{eq:parameter_update_generalized}
    \left( \nu^{-1} F^T F + R^T D_{\boldsymbol{\theta}}^{-1} R \right) \mathbf{x} = \nu^{-1} F^T \mathbf{y}.
\end{equation}
Observe that the matrix $\nu^{-1} F^T F + R^T D_{\boldsymbol{\theta}}^{-1} R$ is symmetric positive-definite as long as the common kernel condition \cref{eq:common_kernel_condition} is satisfied. 
In this case, each of \cref{eq:parameter_update_generalized,eq:IAS_x_update_least_generalized,eq:IAS_x_update_with_transformation} shares the same unique solution.   \Cref{alg:IAS_noise} summarizes the resulting new {\em generalized IAS algorithm} for MAP estimation. In \Cref{sec:priorconditioning} we introduce a priorconditioning approach  designed to enhance its computational efficiency.

\begin{algorithm}[h]
\caption{The generalized IAS algorithm}\label{alg:IAS_noise} 
\begin{algorithmic}[1]
    \STATE{\textbf{Input:} Data $\mathbf{y}$, forward operator $F$, sparsifying operator $R$, hyper-parameters $( r, \beta, \vartheta )$ and $( \tilde{r}, \tilde{\beta}, \tilde{\vartheta} )$, and initialization $\mathbf{x}^{(0)}$}
    \STATE{\textbf{Output:} Approximate MAP estimate $(\mathbf{x}^{\MAP},\boldsymbol{\theta}^{\MAP}, \nu^{\MAP} )$ for the posterior of \cref{eq:IAS_model_noise_variance}} 
    \REPEAT
        \STATE{Update the hyper-parameters $\boldsymbol{\theta}$ according to \cref{eq:IAS_update_beta_general_with_transformation}}
        \STATE{Update the noise hyper-parameter $\nu$ according to \cref{eq:IAS_update_nu2}}
	\STATE{Update the parameter vector $\mathbf{x}$ according to \cref{eq:IAS_x_update_with_transformation} with $\sigma^2 = \nu$} 
    \UNTIL{convergence or the maximum number of iterations is reached}
\end{algorithmic}
\end{algorithm}

\subsection{Priorconditioning with general sparsifying transformation} \label{sec:priorconditioning}

As already mentioned, employing the priorconditioning strategy described \cref{rem:CGLS} now requires more attention paid to the sparsifying transformation $R$ if $R$ has a nontrivial kernel. 
In contrast to the priorconditioning approach detailed in \cref{rem:CGLS} for the case $R = I_N$,  the change of variables $\mathbf{w} = D_{\boldsymbol{\theta}}^{-1/2} R \mathbf{x}$ cannot be used since $\mathbf{x}$ cannot be recovered from $\mathbf{w}$ if $R$ has a non-trivial kernel. Specifically, $\mathbf{x}$ can only be recovered up to the part in the kernel (null space) of $R$. To avoid having to impose additional restrictions or constraints (see discussion following \cref{eq:common_kernel_condition}), we now introduce a new priorconditioning approach that can be applied to any general sparsifying transform satisfying the common kernel condition. 
To this end, consider the splitting of the least squares solution $\mathbf{x}^\star$ of \cref{eq:IAS_x_update_least_generalized} as 
\begin{equation}\label{eq:x_decomposition}
    \mathbf{x}^\star = \mathbf{x}_{\ker} + \mathbf{x}_{\perp},
\end{equation} 
where $\mathbf{x}_{\ker}$ is an element of the kernel of $R$, i.e., $R \mathbf{x}_{\ker} = \mathbf{0}_K$, and $\mathbf{x}_{\perp}$ is an element of the $F$-weighted orthogonal complement $\ker(R)^{\perp_F}$, i.e., $(F \mathbf{x}_{\perp})^T (F \mathbf{z}) = 0$ for all $\mathbf{z} \in \ker(R)$. This splitting makes it possible to compute $\mathbf{x}$ in a manner akin to the priorconditioning approach.

\subsubsection{An oblique projection approach}\label{sec:obliqueprojection}

In what follows, let $W \in \mathbb{R}^{N \times P}$,  $P =  \operatorname{dim}(\ker(R))$, be a  matrix with orthonormal columns such that $\operatorname{col}(W) = \ker(R)$ and let $(\cdot)^\dagger$ denote the Moore-Penrose pseudoinverse. Classical inverse problems methodology \cite[Section 8.5]{Hansen2013Oblique, hansen2010discrete} then provides the two components in \cref{eq:x_decomposition} as
\begin{align}\label{eq:x_decomposition_explicit}
    \mathbf{x}_{\ker} = W ( F W)^\dagger \mathbf{y}, \quad \mathbf{x}_{\perp} = R_{\boldsymbol{\theta}}^{\#} \mathbf{w}^\star, \quad R_{\boldsymbol{\theta}}^{\#} = \left( I_N - W(F W)^\dagger F \right) R_{\boldsymbol{\theta}}^\dagger.
\end{align}
Here $R_{\boldsymbol{\theta}}^{\#}$
 denotes the oblique ($F$-weighted) pseudoinverse of $R_{\boldsymbol{\theta}} \coloneqq D_{\boldsymbol{\theta}}^{-1/2} R$. Furthermore, $\mathbf{w}^\star$ in \cref{eq:x_decomposition_explicit} is the unique least squares solution of the ``whitened'' linear system 
\begin{equation}\label{eq:IAS_x_update_least_whitened}
    \begin{bmatrix} \nu^{-1/2} F R_{\boldsymbol{\theta}}^{\#} \\ I_K \end{bmatrix} \mathbf{w} 
        = \begin{bmatrix} \nu^{-1/2} \mathbf{y} \\ \mathbf{0}_K \end{bmatrix}.
\end{equation} 
Observe that the procedure resulting in \eqref{eq:IAS_x_update_least_whitened} corresponds to putting the least squares problem \eqref{eq:IAS_x_update_least} into standard form \cite{Elden1977LeastSquares, Hansen2013Oblique}. Therefore, to obtain the solution to \cref{eq:IAS_x_update_with_transformation}, we first solve \cref{eq:IAS_x_update_least_whitened} for $\mathbf{w}^\star$ using the CGLS method described in \cref{rem:CGLS}, and then directly  recover  $\mathbf{x}^\star$  in \eqref{eq:x_decomposition} from \cref{eq:x_decomposition_explicit}.

\begin{remark}[Special cases]
    If $R$ is invertible then $R_{\boldsymbol{\theta}}^{\#} = R^{-1}_{\boldsymbol \theta}$ and $\mathbf{x}_{\text{ker}} = \mathbf{0}_N$, and the solution $\mathbf{x}^\star$ to \cref{eq:IAS_x_update_least_generalized} is given by $\mathbf{x}^\star = R_{\boldsymbol{\theta}}^{-1} \mathbf{w}^\star$, where $\mathbf{w}^\star$ is the least squares solution to
    \begin{align}
       \begin{bmatrix} \nu^{-1/2} F R_{\boldsymbol{\theta}}^{-1}  \\ I_N \end{bmatrix} \mathbf{w} = \begin{bmatrix}
           \nu^{-1/2} \mathbf{y} \\ \mathbf{0}_N
       \end{bmatrix}.
    \end{align}
    If $R$ has full column rank with $R \in \R^{K \times N}$ then $R_{\boldsymbol{\theta}}^{\#} = R_{\boldsymbol{\theta}}^\dagger$ and $\mathbf{x}_{\text{ker}} = \mathbf{0}_N$. In this case the solution $\mathbf{x}^\star$ to \cref{eq:IAS_x_update_least_generalized} is given by $\mathbf{x}^\star = R_{\boldsymbol{\theta}}^{\dagger} \mathbf{w}^\star$, where $\mathbf{w}^\star$ is the least squares solution to 
     \begin{align}
       \begin{bmatrix} \nu^{-1/2} F R_{\boldsymbol{\theta}}^{\dagger}  \\ I_K \end{bmatrix} \mathbf{w} = \begin{bmatrix}
           \nu^{-1/2} \mathbf{y} \\ \mathbf{0}_K
       \end{bmatrix}.
    \end{align}
    Both cases represent forms of priorconditioning currently used, see \cite{calvetti2020sparse,calvetti2020sparsity} and references therein. 
    For the more general rank-deficient case, the procedure resulting in \cref{eq:IAS_x_update_least_whitened} offers a generalization for priorconditioning that has not yet been considered in the context of sparsity-promoting hierarchical Bayesian models. 
\end{remark}

\subsubsection{Efficient implementation of priorconditioning}

The computational bottleneck of solving \cref{eq:IAS_x_update_least_whitened} for large $N$ is the implementation of the potentially expensive matrix-vector product operations with the matrices $R^{\#}_{\boldsymbol{\theta}}$ and $(R^{\#}_{\boldsymbol{\theta}})^T$ as expressed in \cref{eq:x_decomposition_explicit}. 
It is reasonable to assume that the dimension of the kernel of $R$, denoted by $P = \operatorname{dim}(\ker(R))$, is relatively small. 
Since $W \in \mathbb{R}^{N \times P}$, we can readily compute the economic QR decomposition $FW = \tilde{Q} \tilde{R}$ with $\tilde{Q} \in \mathbb{R}^{ M \times P }$ and $\tilde{R} \in \mathbb{R}^{ P \times P }$.
Furthermore, using $(FW)^\dagger = \tilde{R}^{-1} \tilde{Q}^T$ provides an efficient way to compute matrix-vector products involving $(FW)^{\dagger}$ or $((FW)^{\dagger})^T$. 
    
Since $\ker(R)$ and $W$ are independent of $\boldsymbol{\theta}$, we need only to compute the QR decomposition of $F W$ once and then reuse this factorization to compute matrix-vector products involving $R^{\#}_{\boldsymbol{\theta}}$ and $(R^{\#}_{\boldsymbol{\theta}})^T$ for varying $\boldsymbol{\theta}$. 
However, greater care must be taken when computing matrix-vector products with pseudoinverses $R_{\boldsymbol{\theta}}^\dagger$ or  $(R_{\boldsymbol{\theta}}^\dagger)^T$, since the QR approach will be computationally infeasible for moderate to large values of $K$. 
One approach is to instead use an approximate approach based on the relations
\begin{align}\label{eq:approx_pseudoinverse} 
    R_{\boldsymbol{\theta}}^\dagger \approx (R_{\boldsymbol{\theta}}^T R_{\boldsymbol{\theta}} + \delta I_N )^{-1} R_{\boldsymbol{\theta}}^T, \quad
    (R_{\boldsymbol{\theta}}^\dagger)^T \approx R_{\boldsymbol{\theta}} (R_{\boldsymbol{\theta}}^T R_{\boldsymbol{\theta}} + \delta I_N )^{-1}, 
\end{align}
for some small $\delta > 0$. This approach is particularly attractive when $R_{\boldsymbol{\theta}}^T R_{\boldsymbol{\theta}}$ is a banded matrix, in which case computing the (banded) Cholesky factorization of $R_{\boldsymbol{\theta}}^T R_{\boldsymbol{\theta}} + \delta I_N$ to apply the inverse in \cref{eq:approx_pseudoinverse} costs $\mathcal{O}(p^2 N)$ flops where $p$ denotes the bandwidth of $R_{\boldsymbol{\theta}}^T R_{\boldsymbol{\theta}}$ \cite{rue2005gaussian}. 
We note that other approaches, such as multigrid methods \cite{Briggs2000Multigrid}, are not particularly advantageous here due to the dependence of the matrix on $\boldsymbol{\theta}$ which changes across iterations.

For extremely large problems where a (banded) Cholesky factorization is infeasible, one may instead evaluate the pseudoinverse approximations in \cref{eq:approx_pseudoinverse} using a CG or CGLS method. Alternatively, and perhaps more naturally, a modified CG algorithm \cite{hestenes1975pseudoinversus, hayami2018convergence} directly computes the pseudoinverses without requiring an approximation such as \cref{eq:approx_pseudoinverse}. Specifically, the matrix-vector product $R_{\boldsymbol{\theta}}^\dagger \mathbf{v}$ is approximated by applying the CG method to the solution of $R_{\boldsymbol{\theta}}^T R_{\boldsymbol{\theta}} \mathbf{z} = R_{\boldsymbol{\theta}}^T \mathbf{v}$ for $\mathbf{z} \in \operatorname{col}(R^T)$, initialized by some $\mathbf{z}^{(0)} \in \operatorname{col}(R^T)$. The method may be further accelerated with a preconditioner. 
For example, suppose $R$ represents a two-dimensional anisotropic discrete gradient operator with Neumann boundary conditions. In this case one may take advantage of a fast DCT-based spectral preconditioner derived from the unweighted discretized Laplacian $R^T R$ \cite{hansen2006deblurring,Strang1999DCT, ghiglia1994robust}. For ease of presentation, we defer a detailed explanation of this technique to  \cref{app:iterative_pinv_computations}. 
To our knowledge, this current investigation is the first to demonstrate the use of such a preconditioning strategy in the context of implementing priorconditioning for sparsity-promoting hierarchical Bayesian models.

\subsection{Convexity and convergence} 
\label{sub:main_noise_convexity} We now provide some results on the convexity of the objective $\mathcal{G}(\mathbf{x}, \boldsymbol{\theta})$ and the convergence of the generalized IAS algorithm in \cref{alg:IAS_noise}.

\subsubsection{Convexity}\label{sub:main_convexity}

\cref{thm:main_convexity} summarizes how the values of the hyper-prior parameters $r/\tilde{r}$, $\beta/\tilde{\beta}$, and $\vartheta/\tilde{\vartheta}$ affect the convexity properties of the objective function $\mathcal{G}$.

\begin{theorem}\label{thm:main_convexity}
    Let $\mathcal{G}( \mathbf{x}, \boldsymbol{\theta}, \nu )$ be the objective function in \cref{eq:IAS_G_with_noise_variance},  and let $\eta = r \beta - 3/2$,  $\tilde{\eta} = \tilde{r} \tilde{\beta} - [M+2]/2$. 
    Let $R \in \mathbb{R}^{K \times N}$ satisfy the common kernel condition \cref{eq:common_kernel_condition}.
    \begin{enumerate}
        \item 
	  If $r, \tilde{r} \geq 1$ and $\eta, \tilde{\eta} > 0$, then $\mathcal{G}( \mathbf{x}, \boldsymbol{\theta}, \nu )$ is globally strictly convex. 

	  \item 
        Assume the following two conditions hold: 
        \begin{enumerate}
            \item[(a)] \{ $0 < r < 1$ and $\eta > 0$ \} or \{ $r < 0$ \};
            \item[(b)] \{ $0 < \tilde{r} < 1$ and $\tilde{\eta} > 0$ \} or \{ $\tilde{r} < 0$  \}.
        \end{enumerate}
        Then $\mathcal{G}( \mathbf{x}, \boldsymbol{\theta}, \nu )$ is locally convex at $(\mathbf{x}, \boldsymbol{\theta}, \nu)$ provided that
        \begin{equation} 
            \theta_i < \vartheta_i \left( \frac{\eta}{r | r - 1 |} \right)^{1/r} 
            \quad \text{and} \quad 
            \nu < \tilde{\vartheta} \left( \frac{\tilde{\eta}}{\tilde{r} | \tilde{r} - 1 |} \right)^{\frac{1}{\tilde{r}}}, \quad i = 1,\dots,K.
	  \end{equation} 
    \end{enumerate} 	
\end{theorem}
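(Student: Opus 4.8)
The plan is to exploit the additive separability of $\mathcal{G}$ in its hyper-parameters and reduce both claims to a family of two-variable ``building-block'' convexity analyses of the type underlying \cref{thm:previous_convexity_result}, glued together using the common kernel condition \cref{eq:common_kernel_condition}. Writing $\mathcal{G} = \mathcal{G}_0 + \delta_{\mathbb{R}_+^{K+1}}$ with $\mathcal{G}_0$ the smooth part, I would decompose $\mathcal{G}_0(\mathbf{x},\boldsymbol{\theta},\nu) = \Phi_0(\mathbf{x},\nu) + \sum_{i=1}^K \Phi_i(\mathbf{x},\theta_i)$, where $\Phi_i(\mathbf{x},\theta_i) = \tfrac{1}{2}[R\mathbf{x}]_i^2/\theta_i + (\theta_i/\vartheta_i)^r - \eta\log\theta_i$ couples $\mathbf{x}$ to $\theta_i$ only through the scalar $[R\mathbf{x}]_i$, and $\Phi_0(\mathbf{x},\nu) = \tfrac{1}{2\nu}\|F\mathbf{x}-\mathbf{y}\|_2^2 + (\nu/\tilde{\vartheta})^{\tilde r} - \tilde\eta\log\nu$ couples $\mathbf{x}$ to $\nu$ only through the residual $\mathbf{r} = F\mathbf{x}-\mathbf{y}$. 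Since convexity is preserved under sums and $\delta_{\mathbb{R}_+^{K+1}}$ is the (convex) indicator of a convex set, it suffices to analyze the blocks on the open effective domain $\mathbb{R}^N\times\mathbb{R}_{++}^K\times\mathbb{R}_{++}$, where $\mathcal{G}_0$ is smooth.

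For the $\theta$-blocks I would invoke the scalar function $\phi(s,t) = \tfrac{s^2}{2t} + (t/\vartheta)^r - \eta\log t$ on $\mathbb{R}\times\mathbb{R}_{++}$, whose Hessian has positive $(1,1)$ entry $1/t$ and determinant $t^{-3}\big(r(r-1)(t/\vartheta)^r + \eta\big)$; this is the analysis behind \cref{thm:previous_convexity_result}, and it shows $\phi$ is strictly convex when $r\geq 1,\ \eta>0$, while under the remaining regimes the determinant condition $r(r-1)(t/\vartheta)^r+\eta>0$ is equivalent to $\theta<\vartheta(\eta/(r|r-1|))^{1/r}$. Each $\Phi_i$ is $\phi$ precomposed with the linear map $\mathbf{x}\mapsto[R\mathbf{x}]_i$, so its Hessian quadratic form along a direction $(\mathbf{d}_x,\mathbf{d}_\theta,\delta_\nu)$ equals the $\phi$-form at $([R\mathbf{d}_x]_i,[\mathbf{d}_\theta]_i)$: nonnegative under the stated conditions, and zero only when $[R\mathbf{d}_x]_i=0$ and $[\mathbf{d}_\theta]_i=0$.

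For the $\nu$-block I would establish the vector-valued analogue. Here $\tfrac{\|\mathbf{r}\|^2}{2\nu}$ is the perspective of $\tfrac12\|\cdot\|^2$, hence jointly convex in $(\mathbf{r},\nu)$, and a direct computation gives the Hessian quadratic form of $\Phi_0$ along $(\mathbf{d}_x,\delta_\nu)$ as $\tfrac1\nu\big\|F\mathbf{d}_x - (\delta_\nu/\nu)\mathbf{r}\big\|_2^2 + \delta_\nu^2\big(\tilde r(\tilde r-1)\nu^{-2}(\nu/\tilde\vartheta)^{\tilde r} + \tilde\eta\nu^{-2}\big)$. The scalar factor in the second term is exactly the $\phi$-determinant condition with tildes, so it is positive whenever $\tilde r\geq 1,\ \tilde\eta>0$ (Part 1) or $\nu<\tilde\vartheta(\tilde\eta/(\tilde r|\tilde r-1|))^{1/\tilde r}$ (Part 2). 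In either case the $\Phi_0$-form is nonnegative and vanishes only when $F\mathbf{d}_x=\mathbf{0}$ and $\delta_\nu=0$; thus the multivariate residual collapses back to the same one-dimensional threshold in $\nu$.

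Finally I would assemble the pieces: the full Hessian quadratic form of $\mathcal{G}_0$ along $(\mathbf{d}_x,\mathbf{d}_\theta,\delta_\nu)$ is the sum of the nonnegative block forms, and it vanishes only if $R\mathbf{d}_x=\mathbf{0}$, $\mathbf{d}_\theta=\mathbf{0}$, $F\mathbf{d}_x=\mathbf{0}$, and $\delta_\nu=0$. The common kernel condition \cref{eq:common_kernel_condition} then forces $\mathbf{d}_x\in\ker F\cap\ker R=\{\mathbf{0}_N\}$, so the direction is zero, yielding global strict convexity (Part 1) and positive-definiteness of the Hessian at the given point, i.e. local convexity (Part 2). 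I expect the main obstacle to be the $\nu$-block: unlike the scalar $\theta$-blocks it carries the full data misfit through the \emph{vector} residual $\mathbf{r}$, so one must verify that the perspective structure reduces the multivariate coupling to the single scalar threshold in $\nu$, which is the one place the argument genuinely goes beyond a direct citation of \cref{thm:previous_convexity_result}. A secondary point requiring care is the boundary behavior, handled by restricting attention to the open effective domain where the logarithmic and reciprocal terms are finite and the indicator term only imposes a further convex restriction.
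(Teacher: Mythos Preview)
Your proposal is correct and follows essentially the same route as the paper: split $\mathcal{G}$ into a $(\mathbf{x},\nu)$-block and a $(\mathbf{x},\boldsymbol{\theta})$-block, complete the square in each Hessian quadratic form to obtain $\nu^{-1}\|F\mathbf{d}_x-(\delta_\nu/\nu)\mathbf{r}\|_2^2$ and $\sum_i\theta_i^{-1}([R\mathbf{d}_x]_i-\theta_i^{-1}[\mathbf{d}_\theta]_i[R\mathbf{x}]_i)^2$ plus the scalar remainders that yield the thresholds, and add. Your explicit use of the common kernel condition \cref{eq:common_kernel_condition} to pass from nonnegativity to \emph{strict} positive-definiteness is in fact a point the paper's appendix glosses over (its Lemmas C.1 and C.2 assert strict convexity of each block separately, which fails when $R$ or $F$ has nontrivial kernel), so your assembly step is a genuine, if minor, tightening of the argument.
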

\begin{proof}
    See \cref{app:main_convexity_convergence}.
\end{proof}

\cref{thm:main_convexity} demonstrates that when the noise variance is treated as a generalized gamma-distributed random variable, new conditions arise for the convexity of the corresponding objective function $\mathcal{G}$.
The condition $\tilde{\eta} > 0$ is particularly stringent if global strict convexity is desired since it requires that $\tilde{r} \tilde{\beta} > \frac{M+2}{2}$, and in practice implies that $\tilde{r} \tilde{\beta}$ must be significantly large for a large number of observations. One variance hyper-prior that is frequently used in the literature (e.g., see \cite{Bardsley2012mcmcimaging, Fox2016FastSampling, Saibaba2019marginalization, Brown2018lowrank, gelman2006variance, glaubitz2022generalized}) is an ``uninformative'' inverse gamma distribution,\footnote{Note that an inverse gamma hyper-prior for a variance parameter is equivalent to a gamma hyper-prior for a precision (reciprocal of variance) parameter, as used in \cite{Bardsley2012mcmcimaging, Fox2016FastSampling, Saibaba2019marginalization, glaubitz2022generalized}.} which corresponds to the choice of hyper-prior parameters $\tilde{r} = -1, \tilde{\beta} = 1, \tilde{\vartheta} \approx 0$. 
This type of prior is meant to be weakly informative for the noise variance $\nu$. It approximates the Jeffreys prior $\pi(\nu) \propto \nu^{-1}$, which is the unique (but improper) prior distribution that is uninformative of the scale of $\nu$ \cite{Jeffreys1946Invariant}. \Cref{thm:main_convexity} shows that the resulting objective $\mathcal{G}$ is generally nonconvex for the choices of hyper-prior parameters needed to impose an uninformative variance hyper-prior on $\nu$.

In addition to providing conditions for the convexity of \cref{eq:IAS_model_noise_variance} with the noise variance treated as a random variable,  \cref{thm:main_convexity} yields a more general convexity result for the original model \cref{eq:IAS_model} where the noise variance is assumed to be known. 
Specifically, the existing convexity result in \cref{thm:previous_convexity_result} cannot be applied to determine the convexity of \cref{eq:IAS_model} when it has been modified to include a sparsifying transformation $R$. 

\begin{corollary}\label{lemma:transformation_convexity_result}
    Let $\mathcal{G}(\mathbf{x}, \boldsymbol{\theta}; \nu)$ be the objective function in \cref{eq:IAS_G_with_noise_variance} for any fixed noise variance $\nu > 0$, and let $\eta = r \beta - 3/2$. Let $R \in \mathbb{R}^{K \times N}$ satisfy the common kernel condition \cref{eq:common_kernel_condition}. 
    \begin{enumerate}
	   \item 
	   If $r \geq 1$ and $\eta > 0$, then $\mathcal{G}(\mathbf{x},\boldsymbol{\theta}; \nu)$ is globally strictly convex in the variables $(\mathbf{x}, \boldsymbol{\theta})$.

	   \item 
	   If $0 < r < 1$ and $\eta > 0$, or, if $r < 0$, then $\mathcal{G}(\mathbf{x},\boldsymbol{\theta}; \nu)$ is locally convex in $(\mathbf{x}, \boldsymbol{\theta})$ provided that
	   \begin{equation}
		  \theta_i < \vartheta_i \left( \cfrac{\eta}{r | r-1 |} \right)^{1/r}, \quad i=1,\ldots,K.
	   \end{equation}
	
    \end{enumerate} 
	
\end{corollary}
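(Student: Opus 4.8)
The plan is to derive this corollary from \cref{thm:main_convexity} by freezing the noise variance at the fixed value $\nu > 0$. The key observation is that, once $\nu$ is held constant in \cref{eq:IAS_G_with_noise_variance}, the two terms depending on the noise hyper-parameters, namely $(\nu/\tilde{\vartheta})^{\tilde{r}}$ and $-\tilde{\eta}\log(\nu)$, together with the $\nu$-part of the indicator, reduce to an additive constant in $(\mathbf{x}, \boldsymbol{\theta})$. Since convexity and strict convexity are invariant under adding a constant, the convexity of $\mathcal{G}(\mathbf{x}, \boldsymbol{\theta}; \nu)$ in $(\mathbf{x}, \boldsymbol{\theta})$ is unaffected by the values of $\tilde{r}, \tilde{\beta}, \tilde{\vartheta}$. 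This freedom is precisely what lets us borrow the conclusion of \cref{thm:main_convexity}, whose hypotheses constrain $\tilde{r}, \tilde{\beta}$, even though the corollary places no such constraints.

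Concretely, for part 1 I would introduce the auxiliary objective $\hat{\mathcal{G}}(\mathbf{x}, \boldsymbol{\theta}, \nu)$ obtained from \cref{eq:IAS_G_with_noise_variance} by replacing $(\tilde{r}, \tilde{\beta}, \tilde{\vartheta})$ with convenient values, e.g. $\hat{r} = 1$ and $\hat{\beta} > (M+2)/2$, so that $\hat{r} \geq 1$ and $\hat{\eta} = \hat{r}\hat{\beta} - (M+2)/2 > 0$. Together with the hypotheses $r \geq 1$, $\eta > 0$ and the common kernel condition \cref{eq:common_kernel_condition}, part 1 of \cref{thm:main_convexity} then guarantees that $\hat{\mathcal{G}}$ is globally strictly convex on $\mathbb{R}^N \times \mathbb{R}_{++}^K \times \mathbb{R}_{++}$. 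Restricting a strictly convex function to the affine subspace $\{ \nu = \nu_0 \}$ preserves strict convexity, and the restriction $\hat{\mathcal{G}}(\cdot, \cdot, \nu_0)$ differs from $\mathcal{G}(\cdot, \cdot; \nu_0)$ only by a constant; hence $\mathcal{G}(\mathbf{x}, \boldsymbol{\theta}; \nu_0)$ is globally strictly convex in $(\mathbf{x}, \boldsymbol{\theta})$.

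For part 2 I would proceed analogously but select auxiliary parameters meeting condition (b) of \cref{thm:main_convexity}, for instance $\hat{r} = -1$ with any $\hat{\beta} > 0$. The remaining task is to satisfy the companion bound on $\nu$ in part 2 of \cref{thm:main_convexity}, namely $\nu_0 < \hat{\vartheta}\,(\hat{\eta}/(\hat{r}|\hat{r}-1|))^{1/\hat{r}}$; since the right-hand side scales linearly in $\hat{\vartheta}$ while the remaining factor is a fixed positive number (one checks $\hat{\eta}/(\hat{r}|\hat{r}-1|) > 0$ for $\hat{r} = -1$, as both numerator and denominator are negative), it suffices to take $\hat{\vartheta}$ sufficiently large. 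With this choice, part 2 of \cref{thm:main_convexity} yields local convexity of $\hat{\mathcal{G}}$ at $(\mathbf{x}, \boldsymbol{\theta}, \nu_0)$ whenever $\theta_i < \vartheta_i(\eta/(r|r-1|))^{1/r}$ for all $i$, and restricting to $\{ \nu = \nu_0 \}$ transfers this to local convexity of $\mathcal{G}(\mathbf{x}, \boldsymbol{\theta}; \nu_0)$ at $(\mathbf{x}, \boldsymbol{\theta})$.

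The main obstacle is the bookkeeping in part 2: verifying that the auxiliary noise hyper-parameters can be chosen so that \emph{both} conditions of \cref{thm:main_convexity} part 2 hold at the prescribed $\nu_0$, which rests on the observation that $\hat{\vartheta}$ enters the $\nu$-bound as a free scaling factor. An alternative route, should one prefer to avoid auxiliary parameters, is to note that the Hessian of $\mathcal{G}(\cdot, \cdot; \nu_0)$ in $(\mathbf{x}, \boldsymbol{\theta})$ is exactly the principal submatrix of $\nabla^2\mathcal{G}(\mathbf{x}, \boldsymbol{\theta}, \nu_0)$ associated with the $(\mathbf{x}, \boldsymbol{\theta})$ block; this submatrix is independent of $\tilde{r}, \tilde{\beta}, \tilde{\vartheta}$, and the block-wise analysis in the proof of \cref{thm:main_convexity} shows it is positive (semi)definite under precisely the stated conditions on $r$ and $\beta$ and the common kernel condition, bypassing any constraint on $\nu$.
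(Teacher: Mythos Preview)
Your proposal is correct. The paper states this result as a corollary of \cref{thm:main_convexity} without giving an explicit proof, so your task is precisely to supply a derivation from that theorem, which you do.

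Your main route (introducing auxiliary noise hyper-parameters $\hat r,\hat\beta,\hat\vartheta$ so that the hypotheses of \cref{thm:main_convexity} are met, then restricting to $\nu=\nu_0$ and discarding the resulting additive constant) is valid and self-contained; the bookkeeping in part~2 with $\hat r=-1$ and $\hat\vartheta$ large is sound. Your alternative route---reading off the $(\mathbf{x},\boldsymbol{\theta})$ principal block of the Hessian, which is independent of $\tilde r,\tilde\beta,\tilde\vartheta$---is exactly what the decomposition $\mathcal{G}=\mathcal{G}_0+\mathcal{G}_1$ in the appendix proof of \cref{thm:main_convexity} hands you: with $\nu$ frozen, $\mathcal{G}_0(\mathbf{x},\nu_0)$ reduces to $\tfrac{1}{2\nu_0}\|F\mathbf{x}-\mathbf{y}\|_2^2$ plus a constant (convex in $\mathbf{x}$), and \cref{lem:noise_convexity1} handles $\mathcal{G}_1$; strictness in part~1 then follows from the common kernel condition. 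Either route works; the auxiliary-parameter trick treats \cref{thm:main_convexity} as a black box, while the direct route is shorter once the appendix lemmas are available.
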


Observe that the global and local convexity conditions in \cref{lemma:transformation_convexity_result} are equivalent to those in \cref{thm:previous_convexity_result}. 
That is, the convexity of the objective function does not change with the inclusion of a sparsifying transformation so long as the hyper-prior parameters remain unchanged. Although not part of the discussion in this investigation, the convexity result in \cref{lemma:transformation_convexity_result} allows us to amalgamate our generalized IAS algorithm with hybrid strategies, as elucidated in \cite{calvetti2020sparsity,si2024path}.

\subsubsection{Convergence}\label{sub:main_convergence} 

In light of \cref{thm:main_convexity}, we can state two convergence results for our generalized IAS algorithm in \cref{alg:IAS_noise}. The first result applies to the case of a convex model (when $r/\tilde{r} \geq 1$ and $\eta/\tilde{\eta} > 0$), in which case we see that \cref{alg:IAS_noise} is guaranteed to produce the unique global minimizer of $\mathcal{G}$. The second result states that, in the nonconvex case, any limit point of the iterates produced by \cref{alg:IAS_noise} must be a stationary point of $\mathcal{G}$. Since they closely follow those of  \cref{thm:IAS_G_convergence_convex_case,thm:IAS_G_limit_points}, the corresponding proofs are omitted.

\begin{theorem}\label{thm:main_convergence_convex_case}
    Let $\mathcal{G}$ denote the objective in \cref{eq:IAS_G_with_noise_variance} and let $\{ (\mathbf{x}^{(k)}, \boldsymbol{\theta}^{(k)}, \nu^{(k)}) \}$ denote the sequence of iterates of the  generalized IAS algorithm in \cref{alg:IAS_noise}. 
    If $r, \tilde{r} \geq 1$ and $\eta,\tilde{\eta} > 0$, then $\{ (\mathbf{x}^{(k)}, \boldsymbol{\theta}^{(k)}, \nu^{(k)} ) \} \to (\mathbf{x}^{\MAP}, \boldsymbol{\theta}^{\MAP}, \nu^{\MAP})$ as $k \to \infty$, where $(\mathbf{x}^{\MAP}, \boldsymbol{\theta}^{\MAP}, \nu^{\MAP})$ is the unique global minimizer of $\mathcal{G}$. 
\end{theorem}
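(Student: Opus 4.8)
The plan is to mirror the structure of the proofs the authors reference for \cref{thm:IAS_G_convergence_convex_case,thm:IAS_G_limit_points}, now carried out with three coordinate blocks $\boldsymbol{\theta}$, $\nu$, and $\mathbf{x}$ rather than two, while exploiting the global strict convexity granted by \cref{thm:main_convexity}. First I would record the consequences of the hypotheses $r, \tilde{r} \geq 1$ and $\eta, \tilde{\eta} > 0$: by \cref{thm:main_convexity}(1) the objective $\mathcal{G}$ in \cref{eq:IAS_G_with_noise_variance} is globally strictly convex, so it has at most one minimizer, which—once existence is established—must be the MAP point $(\mathbf{x}^{\MAP}, \boldsymbol{\theta}^{\MAP}, \nu^{\MAP})$. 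I would also note that on the open set $\mathbb{R}^N \times \mathbb{R}_{++}^K \times \mathbb{R}_{++}$ the function $\mathcal{G}$ is $C^1$ (the indicator term vanishes there), and that each of the three block updates in \cref{alg:IAS_noise} returns the \emph{unique} minimizer over its block: the $\boldsymbol{\theta}$-update via \cref{eq:IAS_update_beta_general_with_transformation}, the $\nu$-update via \cref{eq:IAS_update_nu2}, and the $\mathbf{x}$-update via \cref{eq:parameter_update_generalized}, whose coefficient matrix $\nu^{-1} F^T F + R^T D_{\boldsymbol{\theta}}^{-1} R$ is symmetric positive definite under the common kernel condition \cref{eq:common_kernel_condition}.

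Next I would establish that the iterates are confined to a compact subset of this open domain. Since every block step is an exact minimization, the objective values $\{\mathcal{G}(\mathbf{x}^{(k)}, \boldsymbol{\theta}^{(k)}, \nu^{(k)})\}$ are nonincreasing, so all iterates remain in the sublevel set determined by the initial value. I would argue this sublevel set is compact by coercivity: the terms $-\eta \log \theta_i$ and $(\theta_i/\vartheta_i)^r$ force each $\theta_i$ into a compact subinterval of $(0,\infty)$ (and \cref{eq:theta_lower} already supplies a uniform positive lower bound), the terms $-\tilde{\eta}\log\nu$ and $(\nu/\tilde{\vartheta})^{\tilde{r}}$ do the same for $\nu$, and with $\boldsymbol{\theta}$ and $\nu$ bounded away from $0$ and $\infty$ the remaining quadratic terms $\tfrac{1}{2\nu}\|F\mathbf{x}-\mathbf{y}\|_2^2 + \tfrac{1}{2}\|D_{\boldsymbol{\theta}}^{-1/2} R \mathbf{x}\|_2^2$ become coercive in $\mathbf{x}$, because $\ker(F)\cap\ker(R) = \{\mathbf{0}_N\}$ makes $\mathbf{x}\mapsto(F\mathbf{x}, R\mathbf{x})$ injective. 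Hence the sequence is bounded and stays uniformly away from the boundary, so that every tail iterate lies in the region where $\mathcal{G}$ is $C^1$; this is the three-block analogue of the boundedness claim in \cref{thm:IAS_G_limit_points}.

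The delicate step is to show that every limit point of the bounded sequence is a stationary point of $\mathcal{G}$, and I expect this to be the main obstacle. The difficulty is that cyclic block-coordinate descent over three or more blocks can fail to reach stationary points for general convex objectives (Powell's example), so the two-block argument does not transfer verbatim. The crucial safeguard is that, as recorded above, each block minimization has a \emph{unique} solution and the corresponding block-argmin maps are continuous on the compact region containing the iterates; under precisely this uniqueness hypothesis the standard block-coordinate-descent convergence theory (e.g.\ \cite{Bertsekas1995NonlinearP, wright2015coordinate}) guarantees that any limit point satisfies the block-wise first-order optimality conditions, which—since $\mathcal{G}$ is $C^1$ there—amount to $\nabla \mathcal{G} = \mathbf{0}$, i.e.\ stationarity. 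I would make this rigorous by passing to a convergent subsequence and using the sufficient decrease of $\mathcal{G}$ together with the continuity of the three update maps to pass to the limit in each optimality condition, exactly paralleling the argument underlying \cref{thm:IAS_G_limit_points}.

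Finally, I would combine strict convexity with the preceding steps. Strict convexity makes the stationary point unique and equal to the global minimizer, so every limit point of the iterates equals $(\mathbf{x}^{\MAP}, \boldsymbol{\theta}^{\MAP}, \nu^{\MAP})$. A bounded sequence all of whose limit points coincide converges to that common point, yielding $\{(\mathbf{x}^{(k)}, \boldsymbol{\theta}^{(k)}, \nu^{(k)})\} \to (\mathbf{x}^{\MAP}, \boldsymbol{\theta}^{\MAP}, \nu^{\MAP})$ and completing the proof.
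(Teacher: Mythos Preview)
Your proposal is correct and follows essentially the approach the paper intends: the paper omits this proof entirely, stating only that it ``closely follow[s]'' those of \cref{thm:IAS_G_convergence_convex_case,thm:IAS_G_limit_points}, and your outline is precisely the three-block adaptation of those arguments---invoking \cref{thm:main_convexity} for strict convexity, the uniqueness of each block update, boundedness of the iterates via coercivity and the lower bounds of the form \cref{eq:theta_lower}, and then concluding via uniqueness of the stationary point. The only cosmetic difference is that the paper's proof of \cref{thm:IAS_G_limit_points} packages the lower-bound step through an auxiliary function $\mathcal{H} = \mathcal{G} + \delta_{[\varepsilon,\infty)^N}$ with closed domain, whereas you argue compactness directly; both accomplish the same thing.
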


\begin{theorem}\label{thm:main_convergence_nonconvex_case}
    Let $\mathcal{G}$ denote the objective in \cref{eq:IAS_G_with_noise_variance} and let $\{ (\mathbf{x}^{(k)}, \boldsymbol{\theta}^{(k)}, \nu^{(k)} ) \}$ denote the sequence of iterates of the generalized IAS algorithm in \cref{alg:IAS_noise}. 
    Then $\{ (\mathbf{x}^{(k)}, \boldsymbol{\theta}^{(k)}, \nu^{(k)} ) \}$ is bounded and any limit point of $\{ (\mathbf{x}^{(k)}, \boldsymbol{\theta}^{(k)}, \nu^{(k)} ) \}$ is a stationary point of $\mathcal{G}$.
\end{theorem}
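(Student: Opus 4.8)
The plan is to recognize \cref{alg:IAS_noise} as an exact alternating minimization (block Gauss--Seidel) scheme and to mirror the argument behind \cref{thm:IAS_G_limit_points}. The key observation is that, although the algorithm updates three quantities, the $\boldsymbol{\theta}$- and $\nu$-updates are mutually decoupled given $\mathbf{x}$: minimizing $\mathcal{G}$ in \cref{eq:IAS_G_with_noise_variance} jointly over $(\boldsymbol{\theta},\nu)$ with $\mathbf{x}$ fixed separates into the two independent subproblems solved by \cref{eq:IAS_update_beta_general_with_transformation,eq:IAS_update_nu2}, since the $\boldsymbol{\theta}$-update depends only on $R\mathbf{x}$ and the $\nu$-update only on $\|F\mathbf{x}-\mathbf{y}\|_2$. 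Hence \cref{alg:IAS_noise} is effectively a \emph{two-block} exact alternating minimization between the block $\mathbf{x}\in\mathbb{R}^N$ and the block $(\boldsymbol{\theta},\nu)\in\mathbb{R}_{+}^{K+1}$, which places it in exactly the setting of \cref{thm:IAS_G_limit_points}. In particular the objective is nonincreasing along the iterates, so $\{\mathcal{G}(\mathbf{x}^{(k)},\boldsymbol{\theta}^{(k)},\nu^{(k)})\}$ is monotone; since $\mathcal{G}$ is bounded below (a nonnegative quadratic plus univariate generalized-gamma potentials that are themselves bounded below on $\mathbb{R}_{++}$), this sequence of values converges.

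Next I would establish boundedness by showing $\mathcal{G}$ is coercive, so that all iterates lie in a fixed sublevel set. There are three sources of coercivity: (i) in $\mathbf{x}$, the common kernel condition \cref{eq:common_kernel_condition} yields $\|F\mathbf{x}\|_2^2+\|R\mathbf{x}\|_2^2\ge c\|\mathbf{x}\|_2^2$ for some $c>0$, so the two quadratic terms jointly control $\|\mathbf{x}\|_2$; (ii) in each $\theta_i$, the penalty $(\theta_i/\vartheta_i)^r$ together with $-\eta\log\theta_i$ forces $\mathcal{G}\to+\infty$ as $\theta_i\to0^+$ or $\theta_i\to\infty$ in both admissible regimes; and (iii) the analogous statement for $\nu$ via $(\nu/\tilde{\vartheta})^{\tilde{r}}$ and $-\tilde{\eta}\log\nu$. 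Because the $(\boldsymbol{\theta},\nu)$-penalties do not involve $\mathbf{x}$, on any sublevel set $\boldsymbol{\theta}$ and $\nu$ are first confined to compact subintervals of $(0,\infty)$, after which the quadratic terms confine $\mathbf{x}$. Moreover the updates keep the iterates away from the boundary: \cref{eq:theta_lower} gives $\theta_i^{(k)}\ge\vartheta_i(\eta/r)^{1/r}$, and the initial value of \cref{eq:IAS_update_nu_IVP} gives $\nu^{(k)}\ge\tilde{\vartheta}(\tilde{\eta}/\tilde{r})^{1/\tilde{r}}$. Thus the sequence is bounded and lies in a compact subset of the open region $\{\boldsymbol{\theta},\nu>0\}$ on which $\mathcal{G}$ is smooth, so limit points exist and are interior, whence stationarity reduces to $\nabla\mathcal{G}=\mathbf{0}$.

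With boundedness in hand, I would invoke the standard convergence theory for exact two-block minimization, following the proof of \cref{thm:IAS_G_limit_points}. Each block subproblem has a unique minimizer: the $\mathbf{x}$-update is a strictly convex quadratic because $\nu^{-1}F^TF+R^TD_{\boldsymbol{\theta}}^{-1}R$ is symmetric positive definite under \cref{eq:common_kernel_condition} (cf.\ \cref{eq:parameter_update_generalized}), while the $(\boldsymbol{\theta},\nu)$-update is unique by \cref{eq:IAS_update_beta_general_with_transformation,eq:IAS_update_nu2}; and the block-minimization maps depend continuously on the fixed block over the compact region in which the iterates live. Taking a convergent subsequence $(\mathbf{x}^{(k_j)},\boldsymbol{\theta}^{(k_j)},\nu^{(k_j)})\to(\mathbf{x}^*,\boldsymbol{\theta}^*,\nu^*)$, the convergence of the objective values together with continuity of the block maps forces each block of the limit to already minimize $\mathcal{G}$ with the other block held fixed. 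The resulting partial optimality conditions $\nabla_{\mathbf{x}}\mathcal{G}=\mathbf{0}$, $\nabla_{\boldsymbol{\theta}}\mathcal{G}=\mathbf{0}$, and $\nabla_{\nu}\mathcal{G}=0$ at $(\mathbf{x}^*,\boldsymbol{\theta}^*,\nu^*)$ then concatenate into $\nabla\mathcal{G}(\mathbf{x}^*,\boldsymbol{\theta}^*,\nu^*)=\mathbf{0}$, i.e.\ the limit point is stationary.

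The main obstacle is the boundedness/coercivity step rather than the abstract block-descent machinery. Establishing coercivity uniformly across \emph{all} admissible hyper-parameter regimes---including the nonconvex cases $r<0$ and $\tilde{r}<0$, where near the boundary the $-\log$ terms no longer contribute positively and one must instead rely on the power penalties $(\theta_i/\vartheta_i)^r$ and $(\nu/\tilde{\vartheta})^{\tilde{r}}$ blowing up, while at infinity the roles reverse---requires a careful case analysis, and the coupling through $1/\nu$ and $1/\theta_i$ in the quadratic terms must be handled by first confining $(\boldsymbol{\theta},\nu)$ to a compact set independent of $\mathbf{x}$. A secondary subtlety, circumvented here by the interiority of the limit points, is that $\mathcal{G}$ is extended-real-valued and a priori one argues with its limiting subdifferential $\partial\mathcal{G}$; the uniform lower bounds keeping $\boldsymbol{\theta},\nu$ away from zero are precisely what let us replace $\partial\mathcal{G}$ by $\nabla\mathcal{G}$ and conclude ordinary first-order stationarity.
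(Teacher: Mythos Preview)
Your proposal is correct and takes essentially the same approach as the paper, which omits the proof and simply refers back to that of \cref{thm:IAS_G_limit_points}. Your reduction to a two-block scheme by grouping $(\boldsymbol{\theta},\nu)$---justified since these updates decouple given $\mathbf{x}$---is a clean way to port that argument, and your direct use of the lower bounds $\theta_i^{(k)}\ge\vartheta_i(\eta/r)^{1/r}$ and $\nu^{(k)}\ge\tilde{\vartheta}(\tilde{\eta}/\tilde{r})^{1/\tilde{r}}$ to keep iterates in the smooth interior plays the same role as the paper's auxiliary function $\mathcal{H}=\mathcal{G}+\delta_{[\varepsilon,\infty)^N}$ in \cref{lem:IAS_H_properties}.
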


Although not explicitly proven here, it is also possible to show global convergence of the iterates of \cref{alg:IAS_noise} to a stationary point of $\mathcal{G}$ using abstract convergence results for descent methods  \cite{Attouch2013Convergence}. Indeed, such behavior is observed in our numerical tests in \Cref{sec:numerics}.

\section{Numerical tests}
\label{sec:numerics} 

We now demonstrate the efficacy of our generalized IAS algorithm in \cref{alg:IAS_noise} through two numerical tests. The hyper-prior parameter vector $\boldsymbol{\vartheta}$ is assumed to be a constant vector, i.e., $\boldsymbol{\vartheta} =  [\vartheta_1, \ldots, \vartheta_K] = \vartheta \mathbf{1}_{K}$ for a scalar parameter $\vartheta \in \mathbb{R}_+$ which plays the role of a tunable regularization parameter. We consider various parameter sets for the remaining hyper-prior parameters $r, \beta, \tilde{r}$, $\tilde{\beta},$ and $\tilde{\vartheta}$. For a discussion of the sensitivity of the IAS method to the parameters $r$ and $\beta$, we refer the reader to \cite{calvetti2020sparse, si2024path}. Our choices of hyper-prior parameters $\tilde{r}$, $\tilde{\beta}$ $\tilde{\vartheta}$ yield an uninformative noise variance hyper-prior as discussed in \Cref{sub:main_convexity}.

\subsection{Signal denoising with different sparsifying transforms}
\label{sub:signal_denoising} 

We first apply our generalized IAS algorithm in \cref{alg:IAS_noise} to a simple signal denoising task and study the effect of varying the sparsifying transformation. We generate a ground truth vector $\overline{\mathbf{x}} \in \mathbb{R}^{N}$ by evaluating the function
\begin{align}
    f(x) =
    \begin{cases} 2 \sin(50 \pi x)  + 25 x, & 0 \leq x < 0.4, \\
2 \sin(50 \pi x)  + 25 x + 50, & 0.4 \leq x < 0.7, \\
2 \sin(50 \pi x)  + 25 x + 120, & 0.7 \leq x \leq 1,
\end{cases}
\end{align}
at $N = 1000$ equispaced points in the interval $[0, 1]$. We then generate a synthetic observation $\mathbf{y} = \overline{\mathbf{x}} + \mathcal{N}(\mathbf{0}_N, \overline{\nu} I_N )$, with $\overline{\nu} = 10$. 
The synthetic data are shown in \cref{fig:experiment_1_toy_data}.

\begin{figure}[h!]
    \centering
    \includegraphics[width=.85\textwidth]{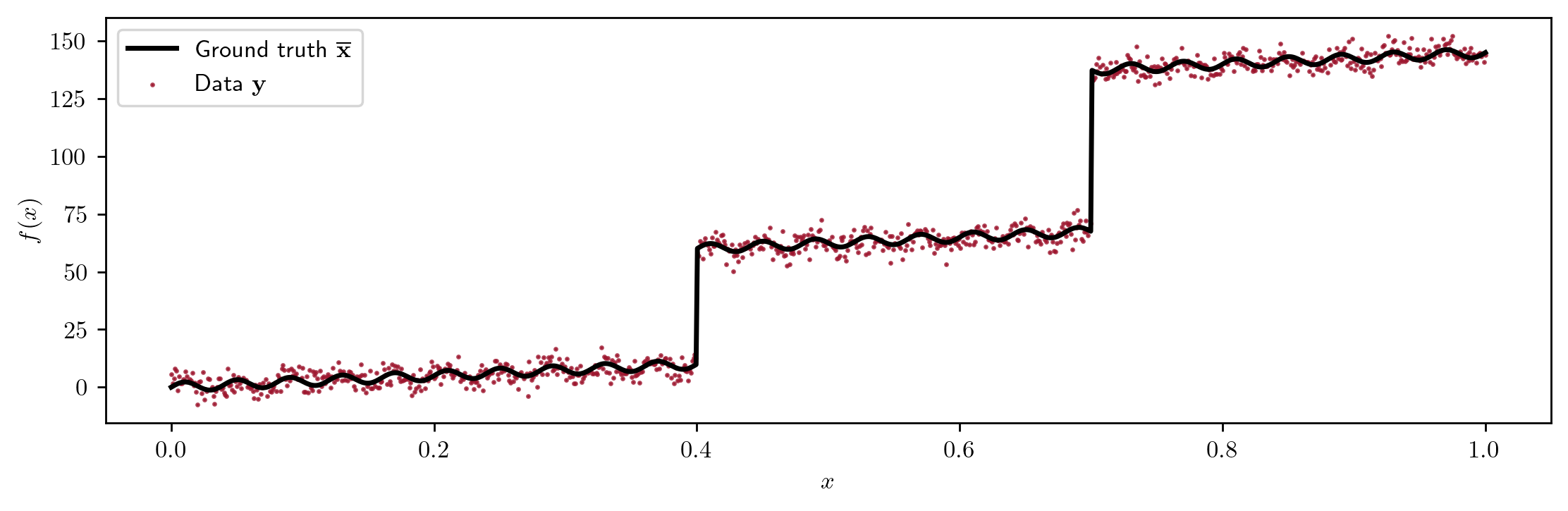}
    \caption{Ground truth signal $\overline{\mathbf{x}}$ and synthetic data $\mathbf{y}$.}
    \label{fig:experiment_1_toy_data}
\end{figure}

We seek to estimate $\overline{\mathbf{x}}$ from $\mathbf{y}$ with varying sparsity assumptions according to the three linear transformations
\begin{equation}
\label{eq:experiment_1_Rs}
\begin{aligned}
    R_1 = 
    \begin{bmatrix}
        -1 & 1  &   &  \\
        & \ddots & \ddots &  \\
        &  & -1 &  1 
    \end{bmatrix}, \ 
    R_2 = 
    \begin{bmatrix}
        -1 & 2 & -1  &   &  \\
        & \ddots & \ddots & \ddots & \\
        &  & -1 &  2 & 1
    \end{bmatrix}, \
    R_3 = 
    \begin{bmatrix}
        -1 & 3 & -3  & 1  & &  \\
        & \ddots & \ddots & \ddots & \ddots  &  \\
    &  & -1 &  3 & -3 & 1
    \end{bmatrix}.
\end{aligned}
\end{equation}
These matrices represent discretizations of the first, second, and third derivatives, respectively. Notably, $\ker(R_i) \neq \{ \mathbf{0}_N \}$ for $i \in \{ 1, 2, 3 \}$.

\begin{remark}
Observe that the kernels of $R_1$, $R_2$, and $R_3$ respectively consist of all constant signals ($R_1$), all constant and linear signals ($R_2$), and all constant, linear, and quadratic signals ($R_3$). 
It follows that $\operatorname{dim}(\operatorname{ker}(R_1)) = 1$, $\operatorname{dim}(\operatorname{ker}(R_2)) = 2$, and $\operatorname{dim}(\operatorname{ker}(R_3)) = 3$. 
That is, the linear transformations have an increasing---and non-trivial---kernel.
\end{remark}

Recall that $\{W_i\}_{i = 1}^3$, where $\ker(R_i) = \operatorname{col}(W_i)$, is needed to accelerate \cref{alg:IAS_noise} using the priorconditioning technique described in \cref{sec:priorconditioning}.
To this end, it suffices to take
\begin{align}
    W_1 = \begin{bmatrix}
        1 \\ \vdots \\ 1
    \end{bmatrix}, \quad W_2 = \begin{bmatrix}
        1 & 1 \\ 1 & 2 \\ \vdots & \vdots \\ 1 & N-1 \\ 1 & N
    \end{bmatrix}, \quad W_3 = \begin{bmatrix}
        1 & 1 & 1 \\ 1 & 2 & 3 \\ 
        1 & 3 & 6 \\ \vdots & \vdots & \vdots  \\ 
     1 & N & N(N+1)/2
    \end{bmatrix}.
\end{align}
 Let $R_{i, \boldsymbol{\theta}} = D_{\boldsymbol{\theta}}^{-1/2} R_i$. To implement the pseuodinverses $R_{i, \boldsymbol{\theta}}^\dagger$ needed for priorconditioning, we use \cref{eq:approx_pseudoinverse} to obtain
\begin{align}\label{eq:Ri_pinv}
    R_{i, \boldsymbol{\theta}}^\dagger \approx (R_{i, \boldsymbol{\theta}}^T R_{i, \boldsymbol{\theta}} + \delta I_N )^{-1} R_{i, \boldsymbol{\theta}}^T.
\end{align}
For this experiment, we set $\delta = \delta_{\text{PINV}} = 10^{-8}$ and directly employ Cholesky factorizations to compute $(R_{i, \boldsymbol{\theta}}^T R_{i, \boldsymbol{\theta}} + \delta I_N )^{-1}$.
Due to the bandwidths $p_i$ of the matrices $R_{i, \boldsymbol{\theta}}^T R_{i, \boldsymbol{\theta}} + \delta I_N$ being small (here $p_i = i$), this has a computational cost of $\mathcal{O}(p_i^2 N)$ flops. 

\setlength{\extrarowheight}{3pt}
\begin{table}[h!]
\centering
\begin{tabular}{|l|l|l|l|l|l|l|l|l|}
\hline
\textbf{Parameter} & $r$  & $\beta$  & $\tilde{r}$  & $\tilde{\beta}$  & $\tilde{\vartheta}$ & $\delta_{\text{PINV}}$  & $\varepsilon_{\text{CGLS}}$ & $\varepsilon_{\text{IAS}}$    \\ \hline
\textbf{Value}  & $1$  & $1.5 + 10^{-3}$ & $-1$ & $1$  & $10^{-4}$  & $10^{-8}$ & $10^{-4}$  & $10^{-3}$   \\ \hline
\end{tabular}\caption{Parameter values used for the first numerical test. }\label{table:experiment_1_parameters}
\end{table}

The other free parameters for \cref{alg:IAS_noise} are provided in \cref{table:experiment_1_parameters}, except for $\vartheta$. Following \cite{calvetti2019hierachical}, we choose the hyper-prior parameters $r$ and $\beta$ such that the MAP estimate is close to the $\ell_1$ regularized solution, promoting sparsity under the transform $R_i$.
Moreover, the hyper-prior parameters $\tilde{r}$, $\tilde{\beta}$, and $\tilde{\vartheta}$ impose an uninformative prior for the unknown variance $\nu$.

For the (inner loop) CGLS method performing the $\mathbf{x}$-update, we equip the CGLS method with the usual stopping criterion based on the relative residual norm with $\varepsilon_{\text{CGLS}} = 10^{-4}$. This is a relatively stringent tolerance which we utilize to standardize our comparison of \cref{alg:IAS_noise} with and without priorconditioning.  It is common in practice to solve subproblems of a coordinate descent method with less stringent tolerances. For the (outer loop) generalized IAS stopping criterion, we terminate the algorithm at the first index $k^\star$ such that the iterates satisfy both
\begin{align}\label{eq:IAS_stopping_criterion}
    \cfrac{ \| \boldsymbol{\theta}^{(k^\star) } - \boldsymbol{\theta}^{(k^\star - 1) }  \|_2  }{ \| \boldsymbol{\theta}^{(k^\star - 1)} \|_2  } < \varepsilon_{\text{IAS}} \quad \text{ and } \quad \frac{| \nu^{(k^\star)} - \nu^{(k^\star - 1)} |}{ \nu^{(k^\star - 1)} } < \varepsilon_{\text{IAS}}.
\end{align} 
Observe that by \cref{thm:main_convexity}, the parameter values in \cref{table:experiment_1_parameters} correspond to the minimization of a nonconvex objective function. This means that some care must be taken with the initialization of \cref{alg:IAS_noise}  to avoid suboptimal local minima. 
Indeed, in our numerical tests we occasionally observe that \cref{alg:IAS_noise} may learn a noise variance parameter much larger than expected if initialized with a simple initialization such as $\mathbf{x}^{(0)} = \mathbf{0}_N$. This behavior can be mitigated by using the initialization
\begin{align}\label{eq:tikhonov_initialization}
    \mathbf{x}^{(0)} = \argmin_{\mathbf{x} \in \mathbb{R}^N} \,\, \| F \mathbf{x} - \mathbf{y} \|_2^2 + \lambda \| R_i \mathbf{x} \|_2^2
\end{align}
for some suitable  $\lambda > 0$, corresponding to standard Tikhonov regularization.

\subsubsection{Reconstruction results for varying $\vartheta$}

We proceed by studying the behavior of \cref{alg:IAS_noise} applied to this signal denoising task, as a function of the remaining (regularization) parameter $\vartheta$. 
The parameter $\vartheta$ can be viewed as a tuning parameter governing the overall strength of the sparsity-promoting regularization in which small values of $\vartheta$ result in stronger regularization. 
\cref{table:experiment_one_table_R1,table:experiment_one_table_R2,table:experiment_one_table_R3} report the impact of $\vartheta$ on the solution produced by \cref{alg:IAS_noise} for each of the sparsifying transformations in \cref{eq:experiment_1_Rs}. The features we consider are: (1) the learned noise variance $\hat{\nu}$; (2) the  total number of CGLS iterations ($n_{\text{CGLS}})$ or priorconditioned CGLS ($n_{\text{PCGLS}}$) iterations expended across all IAS iterations until convergence; (3) the ratio $t_{\text{PCGLS}}/t_{\text{CGLS}}$ where $t_{\text{PCGLS}}$ and $t_{\text{CGLS}}$ denote the total wall-clock time required by the IAS algorithm with and without priorconditioning, respectively; (4) reconstruction performance metrics such as the relative reconstruction error (RRE), defined as $\text{RRE}(\mathbf{x}, \overline{\mathbf{x}}) = \| \mathbf{x} - \overline{\mathbf{x}} \|_2 / \| \overline{\mathbf{x}} \|_2$, and the structural similarity index measure (SSIM) \cite{wang2004image}; and (5) the value of the discrepancy principle residual $\operatorname{DP}(\mathbf{x},\nu) = \| F \mathbf{x} - \mathbf{y} \|_2^2 - \eta \nu M$ for a safeguard factor $\tau = 1.01$,
which measures how well the solution $\mathbf{x}$ agrees with the learned noise variance \cite{hansen2010discrete, vogel2002computational, engl1996regularization}.\footnote{For the ground truth $\overline{\mathbf{x}}$ and $\overline{\nu}$, we have $F \overline{\mathbf{x}} - \mathbf{y} \sim \mathcal{N}(\mathbf{0}_M, \overline{\nu} I_M)$, which equating the squared norms in expectation yields $\| F \overline{\mathbf{x}} - \mathbf{y} \|_2^2 = \overline{\nu} M$. Thus, we expect that $\operatorname{DP}(\mathbf{x}, \nu)$ is near zero if $(\mathbf{x}, \nu)$ is near $(\overline{\mathbf{x}}, \overline{\nu})$.}

\begin{table}[h!]
\centering
\begin{adjustbox}{width=0.6\textwidth}
\begin{tabular}{lrrrrrrr}
\toprule
$\vartheta$ & $\hat{\nu}$ & $n_{\text{CGLS}}$ & $n_{\text{PCGLS}}$ & $\frac{t_{\text{PCGLS}}}{t_{\text{CGLS}}}$ & RRE & SSIM & DP \\
\midrule
$10^{-3}$ & $16.42$ & $77887$ & $359$ & $0.13$ & 3.16\% & $0.918$ & $67.83$ \\
$10^{-2}$ & $11.35$ & $62749$ & $237$ & $0.21$ & 1.79\% & $0.946$ & $-73.16$ \\
$10^{-1}$ & $10.73$ & $17780$ & $228$ & $0.19$ & 1.64\% & $0.956$ & $-66.00$ \\
$10^{0}$ & $9.15$ & $7104$ & $495$ & $0.41$ & 1.46\% & $0.966$ & $89.38$ \\
$10^{1}$ & $0.29$ & $3117$ & $9016$ & $6.50$ & 3.51\% & $0.731$ & $-251.03$ \\
$10^{2}$ & $0.85$ & $170$ & $3481$ & $10.12$ & 3.52\% & $0.729$ & $-817.59$ \\
$10^{3}$ & $4.67$ & $25$ & $1598$ & $14.94$ & 3.41\% & $0.743$ & $-4613.07$ \\
\bottomrule
\end{tabular}
\end{adjustbox}
\caption{Summary results for the impact of varying $\vartheta$ on the output of \cref{alg:IAS_noise}, using $R_1$ in \cref{eq:experiment_1_Rs} as the sparsifying transformation.}\label{table:experiment_one_table_R1}
\end{table}
\begin{table}[h!]
\centering
\begin{adjustbox}{width=0.6\textwidth}
\begin{tabular}{lrrrrrrr}
\toprule
$\vartheta$ & $\hat{\nu}$ & $n_{\text{CGLS}}$ & $n_{\text{PCGLS}}$ & $\frac{t_{\text{PCGLS}}}{t_{\text{CGLS}}}$ & RRE & SSIM & DP \\
\midrule
$10^{-3}$ & $29.98$ & $539000$ & $2022$ & $0.05$ & 5.38\% & $0.926$ & $-158.53$ \\
$10^{-2}$ & $18.59$ & $475438$ & $2775$ & $0.04$ & 3.81\% & $0.938$ & $-111.35$ \\
$10^{-1}$ & $12.67$ & $86371$ & $5753$ & $0.26$ & 2.76\% & $0.959$ & $-73.77$ \\
$10^{0}$ & $9.74$ & $15937$ & $5833$ & $0.90$ & 2.07\% & $0.970$ & $-55.29$ \\
$10^{1}$ & $6.71$ & $2526$ & $8974$ & $5.71$ & 1.77\% & $0.946$ & $25.83$ \\
$10^{2}$ & $0.29$ & $837$ & $60562$ & $108.59$ & 3.58\% & $0.722$ & $-275.50$ \\
$10^{3}$ & $0.57$ & $108$ & $31878$ & $315.09$ & 3.62\% & $0.717$ & $-569.01$ \\
\bottomrule
\end{tabular}
\end{adjustbox}
\caption{Summary results for the impact of varying $\vartheta$ on the output of \cref{alg:IAS_noise}, using $R_2$ in \cref{eq:experiment_1_Rs} as the sparsifying transformation.}\label{table:experiment_one_table_R2}
\end{table}

\begin{table}[h!]
\centering
\begin{adjustbox}{width=0.6\textwidth}
\begin{tabular}{lrrrrrrr}
\toprule
$\vartheta$ & $\hat{\nu}$ & $n_{\text{CGLS}}$ & $n_{\text{PCGLS}}$ & $\frac{t_{\text{PCGLS}}}{t_{\text{CGLS}}}$ & RRE & SSIM & DP \\
\midrule
$10^{-3}$ & $18.53$ & $584000$ & $8308$ & $0.05$ & 3.88\% & $0.932$ & $-96.22$ \\
$10^{-2}$ & $13.22$ & $135000$ & $15084$ & $0.32$ & 2.94\% & $0.964$ & $-77.09$ \\
$10^{-1}$ & $11.14$ & $80661$ & $16027$ & $0.50$ & 2.48\% & $0.968$ & $-66.62$ \\
$10^{0}$ & $9.61$ & $33743$ & $22810$ & $1.38$ & 2.17\% & $0.966$ & $-55.79$ \\
$10^{1}$ & $7.71$ & $5724$ & $23681$ & $8.00$ & 2.06\% & $0.944$ & $-38.37$ \\
$10^{2}$ & $0.13$ & $5322$ & $71309$ & $22.14$ & 3.60\% & $0.720$ & $-118.33$ \\
$10^{3}$ & $0.24$ & $382$ & $40381$ & $58.20$ & 3.63\% & $0.716$ & $-240.97$ \\
\bottomrule
\end{tabular}
\end{adjustbox}
\caption{Summary results for the impact of varying $\vartheta$ on the output of \cref{alg:IAS_noise}, using $R_3$ in \cref{eq:experiment_1_Rs} as the sparsifying transformation.}\label{table:experiment_one_table_R3}
\end{table}

Recall that the true noise variance used to generate the data was $\overline{\nu} = 10$. We observe in all cases that the noise variance is underestimated when only little regularization ($\vartheta \approx 10^3$) is applied. On the other hand, the noise variance is overestimated when too much regularization ($\vartheta = 10^{-3}$) is applied. This observation may be explained as follows: 
As we increase regularization (promote sparsity more strongly), the model tends to explain data misfits by an increasing noise variance. 
In between these two extremes, we observe that there is a region where the learned noise variance is near the truth $\overline{\nu}$. 
Furthermore, we observe that the parameter $\vartheta$ has a significant impact on the total number of CGLS or PCGLS iterations. 
In the under-regularized regime ($\vartheta = 10^{3}$) we see that $n_{\text{PCGLS}}$ is extremely small compared to $n_{\text{CGLS}}$, whereas the reverse holds in the over-regularized regime ($\vartheta = 10^{-3}$). 
Thus, whether priorconditioning provides a reduction in the number of CGLS iterations is highly dependent on the strength of the regularization imposed. Comparing the reductions in wall-clock time gained by priorconditioning, we observe that the potential benefit of priorconditioning is not nearly as pronounced as it appears in the reduction of CGLS iterations. However, we anticipate that this ratio will depend dramatically on the cost of performing matrix-vector products with the measurement operator $F$. This experiment presents the least favorable conditions for priorconditioning in a wall-clock time comparison, since the forward operator $F$ being equal to the identity does not present any cost when implemented as a function handle. For $F$ of increasing complexity, we expect a wall-clock time comparison to appear increasingly advantageous for priorconditioning when $n_{\text{PCGLS}} < n_{\text{CGLS}}$.

\subsubsection{Reconstruction results with optimal $\vartheta$}

\begin{figure}[h!]
    \centering
    \includegraphics[width=.85\textwidth]{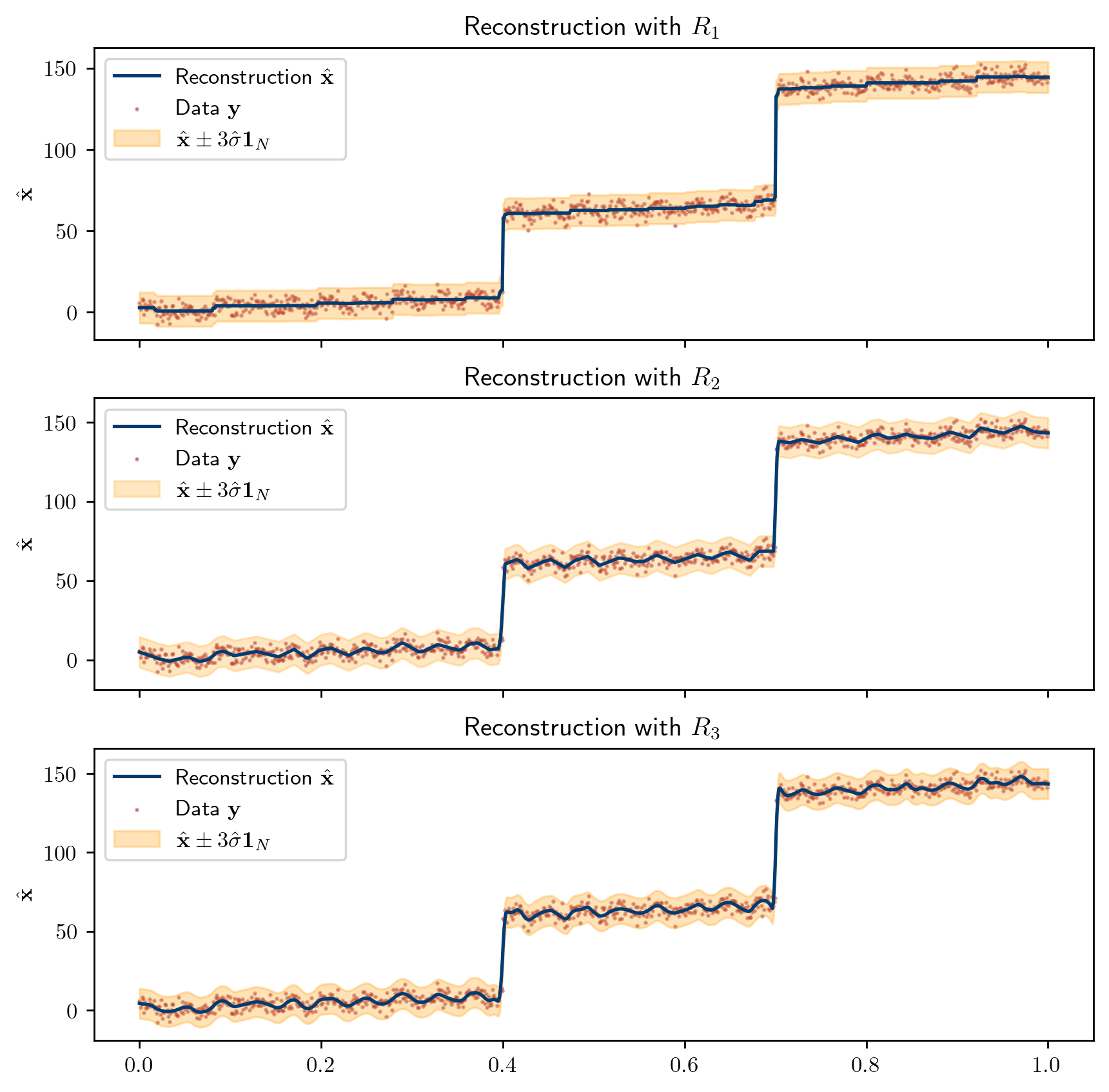}
    \caption{Results of applying \cref{alg:IAS_noise} to the signal denoising problem, according to three different prior sparsifying transformations. For comparison, we overlay the observed data as well as  $\pm 3$ standard deviation ($\hat{\sigma}$) intervals implied by the learned noise variance parameter $\hat{\nu} = \hat{\sigma}^2$. }
    \label{fig:test_1_reconstructions}
\end{figure}

In light of the reconstruction performance metrics in \cref{table:experiment_one_table_R1,table:experiment_one_table_R2,table:experiment_one_table_R3}, we select $\vartheta = 5 \cdot 10^{-1}$ as an ``optimal'' (according to the SSIM) parameter for all three sparsifying transformations and take a closer look at the reconstructions using this parameter. 
We emphasize that an automated regularization parameter selection for more precisely determining an optimal $\vartheta$ remains an open problem that will be investigated in future work. 

\Cref{fig:test_1_reconstructions} presents the reconstructions resulting from \cref{alg:IAS_noise} according to the three different sparsifying transformations given in \cref{eq:experiment_1_Rs}. 
Here, promoting sparsity under $R_1$, $R_2$, and $R_3$ corresponds to encouraging piecewise constant, linear, and quadratic behavior, respectively. We observe that the corresponding learned noise variances according to each sparsifying transformation are all within 4.1\% of the true noise variance $\overline{\nu} = 10$. 

Inspecting the implied $\pm 3$ standard deviation bars in \cref{fig:test_1_reconstructions}, it is evident that the learned noise variance parameters capture the deviation of the observed data $\mathbf{y}$ relative to the source estimate $\hat{\mathbf{x}}$. This suggests that our use of an uninformative noise variance hyper-prior leads to noise variance estimates that are consistent with the source estimate $\hat{\mathbf{x}}$ 
\emph{conditional on the proper choice of regularization parameter $\vartheta$.}

\begin{figure}[h!]
    \centering
    \includegraphics[width=.99\textwidth]{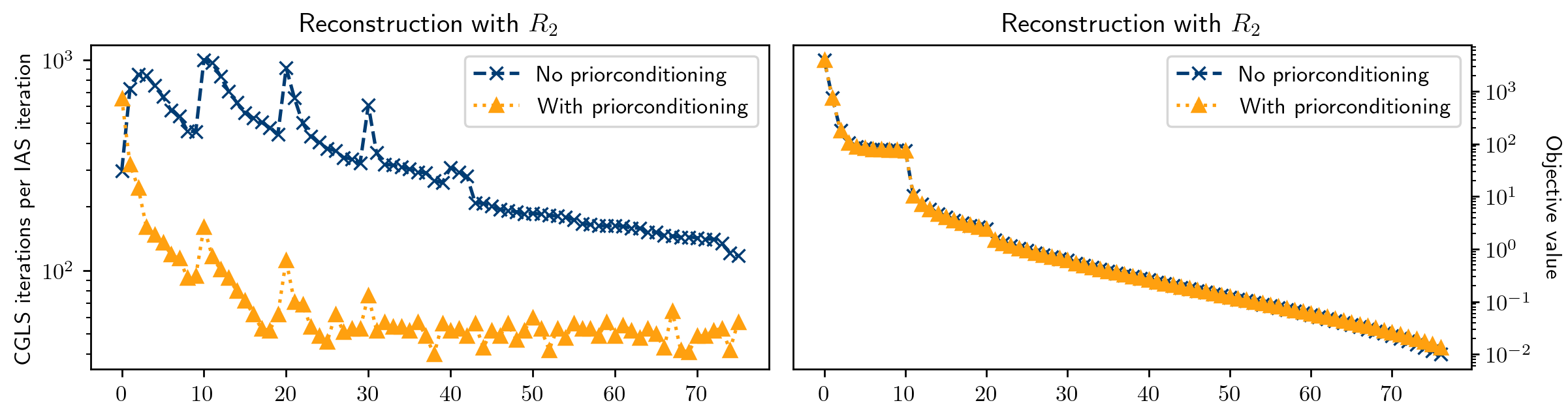}
    \caption{A comparison of the generalized IAS method with and without priorconditioning using the sparsifying transformation $R_2$ (the results using $R_1$ and $R_3$ are qualitatively similar). (Left) The number of CGLS iterations required at each outer IAS iteration. (Right) The objective value attained at each outer IAS iteration. The objective values have been offset by unimportant constants for visualization purposes.}
    \label{fig:test_1_cgls_objvals}
\end{figure}

Finally, \cref{fig:test_1_cgls_objvals} reports on the number of CGLS or PCGLS iterations required to compute each iteration of the generalized IAS algorithm, along with the objective value $\mathcal{G}(\mathbf{x}, \boldsymbol{\theta}, \nu)$ attained at each iteration. We find that the objective values of the algorithm with and without priorconditioning are nearly identical at each iteration. We further observe that priorconditioning reduces the number of CGLS iterations needed to satisfy the CGLS convergence criterion associated with the $\mathbf{x}$-updates in each case. 
The gradual decrease in the number of CGLS iterations is in part due to the warm start for each $\mathbf{x}$-update with the solution from the previous iteration. A second reason is that the priorconditioned least squares problems become better-conditioned as the estimate for $\boldsymbol{\theta}$ is refined.

\subsection{Computed tomography with an unknown noise variance}
\label{sub:ct_experiment}

We consider a CT inverse problem and compare the results of \cref{alg:IAS_noise} for a learned and fixed noise variance. 
We further compare the results for different hyper-prior parameters. 
For our numerical tests we use the $200 \times 200$ Shepp-Logan phantom image shown in \cref{fig:tomo_data}, which for computational purposes we view as a vectorized image $\overline{\mathbf{x}} \in \mathbb{R}^{40000}$. 
To set up the CT problem, we make use of the TRIPs-Py library \cite{pasha2024trips}. 
We define $F \in \mathbb{R}^{M \times N}$ to be a discretized Radon transformation corresponding to a parallel beam geometry, where $N = 40000$ and  $M = PQ$, with $P = 282$  the number of detector pixels, and $Q = 50$ the number of equispaced view angles in $[0, \pi]$ (oriented from the positive $y$-axis). 
The underdetermined rate for the problem given such $F$ is $M/N \approx 35\%$. For more details about CT and the specific problem formulation, we refer the reader to \cite{pasha2024trips, hansen2021computed}. 

To generate synthetic data for this experiment (and to avoid the inverse crime), we first define a second projection operator $\tilde{F} \in \mathbb{R}^{M \times \tilde{N}}$  using the same problem formulation but with a finer $600 \times 600$ grid ($\tilde{N} = 360000$) than is used to perform the reconstruction. We then generate a synthetic observation via $\mathbf{y} = \tilde{F} \overline{\mathbf{x}} + \mathcal{N}(\mathbf{0}_{M}, \overline{\nu} \mathbf{I}_{M})$. Here we have set the true noise variance equal to $3\%$ of the maximum of the noiseless transformed signal $F \overline{\mathbf{x}}$. The observation  (displayed as a sinogram) as well as baseline filtered backprojection and Tikhonov reconstructions are shown in \cref{fig:tomo_data}. 
Let
\begin{align}
\label{eq:R_one_dim}
    R_L 
    = \begin{bmatrix} R_1 \\ \mathbf{0}^T \end{bmatrix}
    \in \mathbb{R}^{L \times L}
\end{align}
be the one-dimensional discrete gradient matrix with reflexive boundary conditions. 
A two-dimensional, anisotropic discrete gradient operator with Neumann boundary conditions can then be expressed as
\begin{align}
\label{eq:RD2}
    R = \begin{bmatrix}
         R_{N_1} \otimes I_{N_2} \\ I_{N_1} \otimes R_{N_2}
    \end{bmatrix},
\end{align}
where $\otimes$ denotes the Kronecker product and $N_1 = N_2 = 200$. 
Observe that $\ker(R) = \operatorname{span}\{ \mathbf{1}_N\}$, meaning that $R$ possesses a nontrivial kernel of dimension one. 
However, $F$ and $R$ satisfy the common kernel condition in \cref{eq:common_kernel_condition} (this is checked numerically).

\begin{table}[h!]
\centering
\begin{tabular}{r r r r r r r}
\toprule
    & $r$ & $\beta$ & $\vartheta$ & $\tilde{r}$ & $\tilde{\beta}$ & $\vartheta$ \\
    \midrule
    $1$st prior model & $1$  & $1.5 + 10^{-3}$ & $10^{-1}$  & $-1$ & $1$ & $10^{-4}$  \\
    $2$nd prior model & $-1$  & $1$  & $5 \cdot 10^{-5}$  & $-1$  & $1$  &  $10^{-4}$ \\ \bottomrule
\end{tabular}
\caption{Hyper-prior parameters for the CT problem.}
\label{table:hyperprior_parameters_exp_two}
\end{table}

\cref{table:hyperprior_parameters_exp_two} displays the two sets of hyper-prior parameters that we use to perform the reconstruction. 
The first set of parameters imposes a gamma hyper-prior and loosely corresponds to $\ell_1$ regularization \cite{calvetti2019hierachical}. 
This first model would otherwise yield a convex problem if not for the noise variance hyper-prior parameters corresponding to an uninformative noise variance prior. 
The second set of parameters is chosen to impose an inverse gamma hyper-prior, resulting in a nonconvex problem even if the noise variance was held fixed. 
For both sets of hyper-prior parameters, the parameter $\vartheta$ has been hand-tuned.

It is computationally burdensome to implement priorconditioning for this experiment in the same way as in \Cref{sub:signal_denoising}. 
There, we implemented the required matrix-vector products for  $R_{\boldsymbol{\theta}}^\dagger$ approximated by \cref{eq:approx_pseudoinverse} using a Cholesky factorization followed by forward/back-substitution to apply the inverse of $R_{\boldsymbol{\theta}}^T R_{\boldsymbol{\theta}} + \delta I_N$. 
Although  $R_{\boldsymbol{\theta}}^T R_{\boldsymbol{\theta}} + \delta I_N$ is still sparse in this experiment, its bandwidth $p$ is related linearly to $N$ by $p = \sqrt{N}$. Hence performing a Cholesky factorization (even exploiting the sparsity) costs $\mathcal{O}(N^{3/2})$ flops. To overcome this obstacle, we resort to using the CG method for applying $R_{\boldsymbol{\theta}}^\dagger$ described earlier in \Cref{sec:priorconditioning}. That is, we use a CG method equipped with a fast DCT-based spectral preconditioner based on the (unweighted) operator $R^T R$. 
A detailed explanation of this technique is provided in \cref{app:iterative_pinv_computations}.

\begin{figure}[h!]
    \centering
    \includegraphics[width=.99\textwidth]{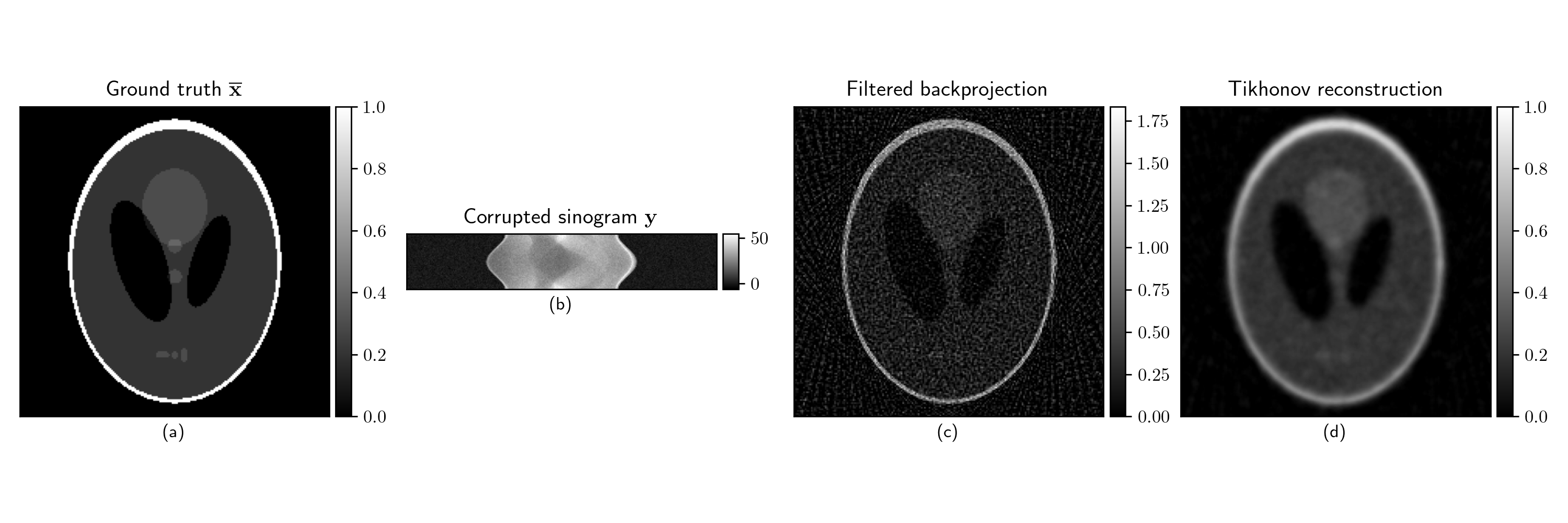}
    \caption{Setup for the tomography problem. (a) Ground truth phantom $\overline{\mathbf{x}}$. (b) Corrupted sinogram data. (c) Baseline filtered back-projection reconstruction. (d) Baseline Tikhonov reconstruction obtained via \cref{eq:tikhonov_initialization}. }
    \label{fig:tomo_data}
\end{figure}

\begin{figure}[h!]
    \centering
    \includegraphics[width=.95\textwidth]{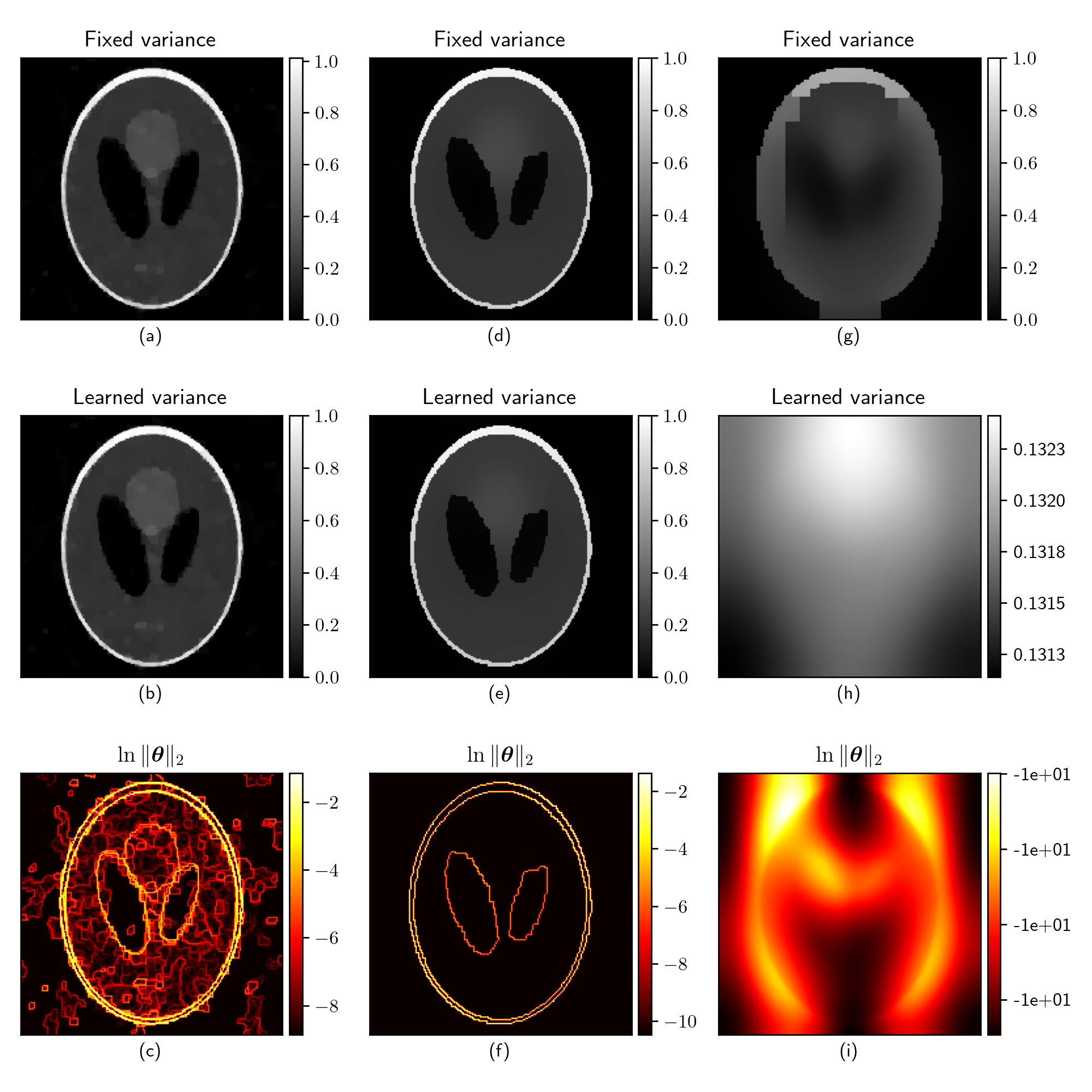}
    \caption{Comparison of reconstructions using fixed and learned noise variances. 
    First column: First hyper-prior parameter set and initialized using the Tikhonov solution in \cref{fig:tomo_data}. 
    Second column: Second hyper-prior parameter set and initialized using the corresponding solution in the first column. 
    Third column: Second hyper-prior parameter set and initialized with $\mathbf{x} = \mathbf{1}_N$. 
    Furthermore, the bottom row shows the natural logarithm of the pixel-wise norm of $\boldsymbol{\theta}$ corresponding to the solutions in the second row.}
    \label{fig:tomo_results}
\end{figure}

Various reconstruction results for the CT problem are shown in \cref{fig:tomo_results}. Here we make comparisons along three dimensions (1) the solutions for varying hyper-prior parameters; (2) the solutions for fixed versus learned noise variance; and (3) the solutions using the second hyper-prior parameter with both a ``good'' and ``bad'' initialization. To enforce nonnegativity in the reconstructions, each update to $\mathbf{x}$ is immediately followed by a projection onto the nonnegative orthant (e.g., see \cite[\S 5]{calvetti2020sparse} for a justification).

Comparing the hyper-priors, the reconstruction (b) using the first hyper-prior parameter set appears to better capture the features of the true phantom ($\text{SSIM} = $ $0.889$) than the reconstruction (e) using the second hyper-prior parameter set ($\text{SSIM} =$ $0.842$). Although the second reconstruction (e) misses some features, (f) indicates that this reconstruction is sparser (seen in the contrast in the components of $\boldsymbol{\theta}$) under $R$ compared to (c). 

Regarding fixed versus learned noise variances, \cref{alg:IAS_noise} with the first parameter set recovers a solution (b) with noise variance parameter $\hat{\nu} \approx 2.51$, which can be compared to the true signal variance  $\overline{\nu} = 2.60$. Although the learned variance is smaller than the truth, there is little qualitative difference between (b) and the solution obtained with fixed noise variance in (a). Note that since in (a) the noise variance is fixed, by \cref{thm:main_convexity} this solution corresponds to the unique global minimizer of a convex objective and is the only provably convex problem solved in this experiment. A similar observation can be made about the reconstruction (e) using the second parameter set and a good initialization; here the learned noise variance is $\hat{\nu} \approx 3.06$ which is greater than the truth, yet there is little qualitative difference when compared with (d).

For the second hyper-prior parameter set, which promotes sparsity more strongly, we illustrate the impact of the choice of initialization by comparing the results (second column)  obtained by initializing with the solution from the first parameter set (first column), versus the results (third column) obtained using a bad initialization chosen as the constant vector $\mathbf{x} = \mathbf{1}_N$. We find that the solution with learned noise variance (h) finds a local minimum with a very large noise variance $(\hat{\nu} \approx 15247$) and a source $\hat{\mathbf{x}}$ that is nearly constant. A similar default to a constant signal has been observed in \cite{Dong2023Horshoe, sanders2020effective} and is typical of a poor initialization in the nonconvex setting considered here. Although the analogous solution obtained with fixed noise variance (g) does not default to a constant, here the solver finds a local minimum with greater objective value than the solution (d) obtained using the same hyper-prior parameters but using (a) as the solver initialization. Overall, this comparison underscores the importance of picking a good initialization when utilizing a strongly sparsity-promoting hyper-prior. 
\section{Summary}\label{sec:summary}

We generalized traditional sparsity-promoting hierarchical Bayesian models and their MAP estimation using the IAS algorithm in two ways:
(1) We expanded the IAS framework to more general sparsifying transforms, which do not necessarily have  trivial kernels; 
(2) We allowed treating the noise variance as a random variable that is estimated during the inference procedure. The resulting generalized IAS algorithm arises from straightforward modifications to the original algorithm.
We demonstrated that these augmentations did not significantly burden the computational expense of the algorithm, and moreover  bring only small modifications to the convexity and convergence analyses of the original IAS procedure. Although not demonstrated here, our approach preserves the option to amalgamate our generalized IAS algorithms with hybrid strategies for nonconvex models, as elucidated in \cite{calvetti2020sparsity,si2024path}. To reduce the computational cost of the method, we discussed a generalization of the priorconditioning technique and detailed its efficient implementation for large-scale problems. Future work will include designing sampling strategies to explore the complete posterior distribution, a critical direction recently initiated in \cite{calvetti2024computationally}. Such sampling strategies may be accelerated using the priorconditioning technique and its efficient implementation detailed here. Additionally, automated methods for determining an appropriate hyper-prior (regularization) parameter $\vartheta$ will be investigated.

\section*{Acknowledgments}
JL and AG were supported in part by AFOSR grant \#F9550-22-1-0411 and US DOD (ONR MURI) grant \#N00014-20-1-2595. 
JG was supported in part by the US DOD (ONR MURI) grant \#N00014-20-1-2595. AG was also supported in part by NSF grants DMS \#1912685 and DOE ASCR \#DE-ACO5-000R22725.

\section*{Data availability statement}
The data that support the findings of this study are openly available at the following URL: \url{https://github.com/jlindbloom/GeneralizedSparsitySolvers}.

\appendix
\section{Proofs of \cref{thm:IAS_G_convergence_convex_case} and \cref{thm:IAS_G_limit_points}}\label{app:convergence_part_one}

Here we prove \cref{thm:IAS_G_convergence_convex_case} and \cref{thm:IAS_G_limit_points}. To prove the latter, we first establish two lemmas that apply in the nonconvex case. The idea is that since $\operatorname{dom}(\mathcal{G})$ is not closed, it is not straightforward to apply existing results to the convergence of block coordinate descent applied to the minimization of $\mathcal{G}$. However, it is sufficient to consider instead the minimization of an auxiliary function $\mathcal{H}$ for which $\operatorname{dom}(\mathcal{H})$ is closed. 

\begin{lemma}\label{lem:IAS_G_epsilon} 
    Let $\mathcal{G}$ denote the objective in \cref{eq:IAS_G}. 
    Then $(\mathbf{x}^*, \boldsymbol{\theta}^*)$ is a local minimizer of $\mathcal{G}$ if and only if it is a coordinate-wise minimizer of $\mathcal{G}$. The latter means that $\boldsymbol{\theta}^{*} = \argmin_{\boldsymbol{\theta}} \mathcal{G}(\mathbf{x}^*, \boldsymbol{\theta})$ and $\mathbf{x}^{*} = \argmin_{\mathbf{x}} \mathcal{G}(\mathbf{x}, \boldsymbol{\theta}^{*})$. Moreover, if $(\mathbf{x}^*, \boldsymbol{\theta}^*)$ is a local minimizer of $\mathcal{G}$ then $(\mathbf{x}^*, \boldsymbol{\theta}^*) \in \mathbb{R}^{N} \times [\varepsilon, +\infty)^{N}$, where $\varepsilon = \vartheta_{\textrm{min}} (\eta/r)^{1/r}$.
\end{lemma}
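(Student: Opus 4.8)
The plan is to treat the stated equivalence as two separate implications and then deduce the lower bound as a corollary of the coordinate-wise characterization. The structural facts that drive everything are that each block subproblem is well behaved: for fixed $\boldsymbol{\theta}$ the map $\mathbf{x} \mapsto \mathcal{G}(\mathbf{x}, \boldsymbol{\theta})$ is a strictly convex quadratic with a unique minimizer (cf. \cref{eq:IAS_x_update}), while for fixed $\mathbf{x}$ the map $\boldsymbol{\theta} \mapsto \mathcal{G}(\mathbf{x}, \boldsymbol{\theta})$ is separable, and each scalar term $g_i(\theta_i) = x_i^2/(2\theta_i) + (\theta_i/\vartheta_i)^r - \eta \log \theta_i$ is coercive on $(0,\infty)$ with a single interior critical point, namely the update \cref{eq:IAS_update_beta_general}. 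Hence each scalar subproblem is unimodal with a unique global minimizer. For the hard direction I would work with the \emph{marginal function} $\Psi(\mathbf{x}) = \min_{\boldsymbol{\theta}} \mathcal{G}(\mathbf{x}, \boldsymbol{\theta}) = \tfrac{1}{2\sigma^2}\| F\mathbf{x} - \mathbf{y}\|_2^2 + \sum_i h_i(x_i)$, where $h_i(x_i) = \min_{\theta_i > 0} g_i(\theta_i)$ is the induced scalar penalty. Since the inner minimizer $\boldsymbol{\theta}(\mathbf{x})$ is unique, positive, and (by the implicit function theorem applied to the stationarity condition) smooth, $\Psi$ is continuously differentiable near any point with $\boldsymbol{\theta}(\mathbf{x}) > \mathbf{0}$.

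For the forward direction, suppose $(\mathbf{x}^*, \boldsymbol{\theta}^*)$ is a local minimizer. Restricting $\mathcal{G}$ to the slice $\{\boldsymbol{\theta} = \boldsymbol{\theta}^*\}$ makes $\mathbf{x}^*$ a local minimizer of the strictly convex quadratic $\mathbf{x} \mapsto \mathcal{G}(\mathbf{x}, \boldsymbol{\theta}^*)$, hence its unique global minimizer, so $\mathbf{x}^* = \argmin_{\mathbf{x}} \mathcal{G}(\mathbf{x}, \boldsymbol{\theta}^*)$. Restricting to $\{\mathbf{x} = \mathbf{x}^*\}$ makes $\boldsymbol{\theta}^*$ a local minimizer of the separable, unimodal map $\boldsymbol{\theta} \mapsto \mathcal{G}(\mathbf{x}^*, \boldsymbol{\theta})$; because each scalar factor has a single interior critical point which is its global minimizer, local coordinate minimality upgrades to global minimality, giving $\boldsymbol{\theta}^* = \argmin_{\boldsymbol{\theta}} \mathcal{G}(\mathbf{x}^*, \boldsymbol{\theta})$. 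Thus $(\mathbf{x}^*, \boldsymbol{\theta}^*)$ is a coordinate-wise minimizer.

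For the backward direction I would pass through $\Psi$. From $\mathcal{G}(\mathbf{x}, \boldsymbol{\theta}) \ge \Psi(\mathbf{x})$ with equality exactly on $\boldsymbol{\theta} = \boldsymbol{\theta}(\mathbf{x})$, together with continuity of $\boldsymbol{\theta}(\cdot)$, one verifies that $(\mathbf{x}^*, \boldsymbol{\theta}^*)$ is a local minimizer of $\mathcal{G}$ if and only if $\boldsymbol{\theta}^* = \boldsymbol{\theta}(\mathbf{x}^*)$ and $\mathbf{x}^*$ is a local minimizer of $\Psi$. The coordinate-wise hypotheses supply $\boldsymbol{\theta}^* = \boldsymbol{\theta}(\mathbf{x}^*)$ and, by the envelope identity, $\nabla \Psi(\mathbf{x}^*) = \nabla_{\mathbf{x}} \mathcal{G}(\mathbf{x}^*, \boldsymbol{\theta}^*) = \mathbf{0}$, so $\mathbf{x}^*$ is at least stationary for $\Psi$. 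What remains is to certify that $\mathbf{x}^*$ is a genuine local minimizer, i.e.\ that the reduced Hessian $\nabla^2 \Psi(\mathbf{x}^*) = \sigma^{-2} F^T F + \diag\!\big(h_i''(x_i^*)\big)$ --- equivalently the Schur complement of the $\mathbf{x}$-block in $\nabla^2 \mathcal{G}(\mathbf{x}^*, \boldsymbol{\theta}^*)$ --- is positive semidefinite. I expect this to be the main obstacle. Unlike the forward direction it is not merely combinatorial: the scalar curvatures satisfy $h_i'' \propto g_i'' - x_i^2/\theta_i^{*3}$, which can turn negative once $\theta_i^*$ leaves the local-convexity window of \cref{thm:previous_convexity_result}, and any such deficit must be absorbed by the coupling term $\sigma^{-2} F^T F$. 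Controlling this interaction is precisely where the hyper-prior parameter regime must be invoked, and the natural tool is the local convexity analysis underlying \cref{thm:previous_convexity_result} (and its transform generalization).

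Finally, the lower bound follows from the coordinate-wise characterization established above. A local minimizer satisfies $\boldsymbol{\theta}^* = \argmin_{\boldsymbol{\theta}} \mathcal{G}(\mathbf{x}^*, \boldsymbol{\theta})$, so each $\theta_i^*$ is given by the update \cref{eq:IAS_update_beta_general}; the monotonicity estimate \cref{eq:theta_lower} then yields $\theta_i^* \ge \vartheta_i (\eta/r)^{1/r} \ge \vartheta_{\textrm{min}} (\eta/r)^{1/r} = \varepsilon$ for every $i$ (using $(\eta/r)^{1/r} > 0$ in each admissible regime), which places $(\mathbf{x}^*, \boldsymbol{\theta}^*)$ in $\mathbb{R}^N \times [\varepsilon, +\infty)^N$ as claimed.
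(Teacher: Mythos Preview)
Your forward direction and the lower-bound argument are essentially identical to the paper's: restrict to coordinate slices, use strict convexity in $\mathbf{x}$ and unimodality in each $\theta_i$, then invoke \cref{eq:theta_lower}. No issues there.

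For the backward direction you diverge substantially from the paper. The paper does \emph{not} pass through the marginal $\Psi$ or analyze a Schur complement. Instead it simply asserts that, because $\mathcal{G}$ is smooth on $\operatorname{Int}(\operatorname{dom}(\mathcal{G}))$ and every stationary point lies in that interior, ``$(\mathbf{x}^*,\boldsymbol{\theta}^*)$ is a local minimizer of $\mathcal{G}$ if and only if $\nabla_{\mathbf{x}}\mathcal{G}=\mathbf{0}_N$ and $\nabla_{\boldsymbol{\theta}}\mathcal{G}=\mathbf{0}_N$,'' and then says the reverse implication ``follows similarly and is due to the smoothness of $\mathcal{G}$,'' citing Lemma~3.3 of \cite{calvetti2020sparse} for an analogous statement. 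In other words, the paper treats the equivalence of coordinate-wise minimality, stationarity, and local minimality as a structural fact about this particular $\mathcal{G}$ and defers the verification to the external reference rather than arguing it out.

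Your approach is more careful and, I would say, more honest about where the difficulty lies: you correctly observe that for a generic smooth nonconvex function a coordinate-wise minimizer can be a saddle, so something specific to $\mathcal{G}$ must be used, and you locate that ``something'' in the positivity of the reduced Hessian $\sigma^{-2}F^TF+\operatorname{diag}(h_i'')$. The paper never confronts this; it absorbs it into the citation. So your route is genuinely different---more self-contained in intent, but (as you acknowledge) incomplete at exactly the step the paper outsources. If you want to close the gap along your lines you would need either the second-order information from the cited lemma or a direct computation showing $h_i''(x_i^*)\ge 0$ at the induced minimizer, which is not automatic outside the convexity window of \cref{thm:previous_convexity_result}.
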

\begin{proof} 

Observe by inspection of \cref{eq:IAS_G} that $\mathcal{G}$ is differentiable on $\operatorname{Int}(\operatorname{dom}(\mathcal{G})) = \mathbb{R}^N \times \mathbb{R}_{++}^N$. Furthermore, any stationary point of $\mathcal{G}$ must lie in $\operatorname{Int}(\operatorname{dom}(\mathcal{G}))$. 
Thus, $(\mathbf{x}^*, \boldsymbol{\theta}^*)$ is a local minimizer of $\mathcal{G}$ if and only if $\nabla_{\mathbf{x}} \mathcal{G}(\mathbf{x}^*, \boldsymbol{\theta}^*) = \mathbf{0}_N$ and  $\nabla_{\boldsymbol{\theta}} \mathcal{G}(\mathbf{x}^*, \boldsymbol{\theta}^*) = \mathbf{0}_N$. 
Suppose first that $(\mathbf{x}^*, \boldsymbol{\theta}^*)$ is a local minimizer of $\mathcal{G}$. Since the objective function appearing in the $\mathbf{x}$-update in \cref{eq:IAS_x_update} is smooth and strictly convex, $\mathbf{x}^*$ must be the unique solution to $\argmin_{\mathbf{x}} \mathcal{G}(\mathbf{x}, \boldsymbol{\theta}^*)$. 
Although the objective function appearing in the $\boldsymbol{\theta}$-update is not always strictly convex, from the discussion following \cref{eq:IAS_update_beta_general}, it has a unique minimizer for any $\mathbf{x}$. 
This implies that $\boldsymbol{\theta}^*$ must be the unique solution to $\argmin_{\boldsymbol{\theta}} \mathcal{G}(\mathbf{x}^*, \boldsymbol{\theta})$. 
Thus, if $(\mathbf{x}^*, \boldsymbol{\theta}^*)$ is a local minimizer of $\mathcal{G}$ then it must be a coordinate-wise minimizer of $\mathcal{G}$. The reverse direction follows similarly and is due to the smoothness of $\mathcal{G}$ (see Lemma 3.3 of \cite{calvetti2020sparse} for a similar statement). 
Finally,  from the lower bound in \cref{eq:theta_lower}  we have $(\mathbf{x}^*, \boldsymbol{\theta}^*) \in \mathbb{R}^{N} \times [\varepsilon, +\infty)^{N}$.
\end{proof}

\begin{lemma}\label{lem:IAS_H_properties} Let $\mathcal{G}$ denote the objective in \cref{eq:IAS_G}, and let $\mathcal{H}(\mathbf{x}, \boldsymbol{\theta}):\mathbb{R}^{2N} \to \mathbb{R} \cup \{ +\infty\}$ be the extended real-valued function given by $\mathcal{H}(\mathbf{x}, \boldsymbol{\theta}) = \mathcal{G}(\mathbf{x}, \boldsymbol{\theta}) + \delta_{[\varepsilon, +\infty)^N}(\boldsymbol{\theta})$ with $\varepsilon = \vartheta_{\text{min}} (\eta/r)^{1/r}$. Then $\mathcal{H}$ is proper, closed, coercive, and has bounded level sets. Moreover, $(\mathbf{x}^*, \boldsymbol{\theta}^*)$ is a local minimizer of $\mathcal{H}$ if and only if it is a coordinate-wise minimizer of $\mathcal{H}$.  Finally, the functions $\mathcal{G}$ and $\mathcal{H}$ attain the same minimal value on $\mathbb{R}^{2N}$ at the same stationary point, and their sets of stationary points are the same.
\end{lemma}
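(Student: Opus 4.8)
The plan is to handle the four structural properties (proper, closed, coercive, bounded level sets) first, then the local-versus-coordinate-wise minimizer equivalence, and finally the coincidence of minimal values and stationary sets, reducing each assertion back to properties of $\mathcal{G}$ and to \cref{lem:IAS_G_epsilon}. Properness is immediate, since $\mathcal{G}$ is finite at any interior point with $\boldsymbol{\theta} \in [\varepsilon,+\infty)^N$ and never takes the value $-\infty$. For closedness I would observe that $\operatorname{dom}(\mathcal{H}) = \mathbb{R}^N \times [\varepsilon,+\infty)^N$ is closed (because $\varepsilon > 0$) and that every term of $\mathcal{G}$ in \cref{eq:IAS_G} is continuous there: the bound $\theta_i \geq \varepsilon > 0$ keeps $D_{\boldsymbol{\theta}}^{-1/2}$, the power term, and the logarithm finite and continuous. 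Hence $\mathcal{H}$ is lower semicontinuous, i.e.\ closed.

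Coercivity is the step I expect to be the main obstacle, both because of the coupling between $\mathbf{x}$ and $\boldsymbol{\theta}$ in $\tfrac12 \| D_{\boldsymbol{\theta}}^{-1/2} \mathbf{x} \|_2^2 = \tfrac12 \sum_i x_i^2/\theta_i$ and because the source of growth in $\boldsymbol{\theta}$ switches between the two admissible parameter regimes. I would split $\mathcal{H} = Q(\mathbf{x}, \boldsymbol{\theta}) + B(\boldsymbol{\theta})$ on its domain, where $Q = \tfrac{1}{2\sigma^2}\|F\mathbf{x}-\mathbf{y}\|_2^2 + \tfrac12\|D_{\boldsymbol{\theta}}^{-1/2}\mathbf{x}\|_2^2 \geq 0$ and $B(\boldsymbol{\theta}) = \sum_i [ (\theta_i/\vartheta_i)^r - \eta \log\theta_i ]$. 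The key observation is that each summand of $B$ is bounded below and tends to $+\infty$ as $\theta_i \to +\infty$: when $r > 0$ the power term dominates the logarithm, while when $r < 0$ one has $\eta < -3/2 < 0$, so the $-\eta\log\theta_i$ term supplies the growth. Then, for any sequence with $\|(\mathbf{x}, \boldsymbol{\theta})\| \to \infty$, either $\|\boldsymbol{\theta}\| \to \infty$ along a subsequence, in which case $B \to +\infty$ and $Q \geq 0$ force $\mathcal{H} \to +\infty$; or $\boldsymbol{\theta}$ remains in a box $\theta_i \leq C$, forcing $\|\mathbf{x}\| \to \infty$, in which case $Q \geq \tfrac12 \|\mathbf{x}\|_2^2 / C \to +\infty$ while $B$ stays bounded below. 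This gives coercivity, and since $\mathcal{H}$ is proper and closed, bounded (indeed compact) level sets follow at once.

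For the minimizer equivalence I would reduce to \cref{lem:IAS_G_epsilon} by showing the $\varepsilon$-floor is never binding at a coordinate-wise minimizer. The $\mathbf{x}$-subproblem of $\mathcal{H}$ coincides with the strictly convex one of $\mathcal{G}$ in \cref{eq:IAS_x_update} (the indicator $\delta_{[\varepsilon,+\infty)^N}$ does not involve $\mathbf{x}$), and the $\boldsymbol{\theta}$-subproblem $\min_{\boldsymbol{\theta} \in [\varepsilon,+\infty)^N} \mathcal{G}(\mathbf{x}^*, \boldsymbol{\theta})$ is separable with a unique minimizer (cf.\ the discussion following \cref{eq:IAS_update_beta_general}) whose components obey the bound \cref{eq:theta_lower}, namely $\theta_i^\star \geq \vartheta_i (\eta/r)^{1/r} \geq \varepsilon$. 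Thus the constrained minimizer over $[\varepsilon,+\infty)^N$ agrees with the unconstrained minimizer over $\mathbb{R}_+^N$, so the coordinate-wise minimizers of $\mathcal{H}$ and $\mathcal{G}$ coincide. Chaining this with \cref{lem:IAS_G_epsilon} (coordinate-wise minimizer of $\mathcal{G}$ $\iff$ local minimizer of $\mathcal{G}$) and the elementary fact that a local minimizer of $\mathcal{G}$ lying in $\operatorname{dom}(\mathcal{H})$ is a local minimizer of $\mathcal{H}$ and conversely (since $\mathcal{H} \geq \mathcal{G}$ with equality on $\operatorname{dom}(\mathcal{H})$) yields the stated equivalence for $\mathcal{H}$.

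Finally, for the equality of minimal values and the coincidence of stationary sets, I would use that any stationary point of $\mathcal{G}$ lies in $\operatorname{Int}(\operatorname{dom}(\mathcal{G})) = \mathbb{R}^N \times \mathbb{R}_{++}^N$ (established in the proof of \cref{lem:IAS_G_epsilon}) and is a coordinate-wise minimizer, hence lands in $\operatorname{dom}(\mathcal{H})$ where $\mathcal{G} = \mathcal{H}$, making it stationary for $\mathcal{H}$. Conversely, writing $\partial \mathcal{H} = \nabla \mathcal{G} + N_{[\varepsilon,+\infty)^N}$ on the relevant set (with $N$ the normal cone and the sum rule valid since $\mathcal{G}$ is smooth there), a stationary point of $\mathcal{H}$ satisfies $\nabla_{\mathbf{x}}\mathcal{G} = \mathbf{0}_N$ and $-\nabla_{\boldsymbol{\theta}}\mathcal{G} \in N_{[\varepsilon,+\infty)^N}(\boldsymbol{\theta}^*)$; because the unconstrained $\boldsymbol{\theta}$-minimizer already satisfies $\theta_i \geq \vartheta_i(\eta/r)^{1/r} \geq \varepsilon$, the floor is inactive in the effective sense and this forces $\nabla_{\boldsymbol{\theta}}\mathcal{G} = \mathbf{0}_N$, i.e.\ a stationary point of $\mathcal{G}$. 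Hence the stationary sets agree and $\mathcal{G} = \mathcal{H}$ there. The identity $\min_{\mathbb{R}^{2N}} \mathcal{G} = \min_{\mathbb{R}^{2N}} \mathcal{H}$ then follows because any global minimizer of $\mathcal{G}$ is in particular local, hence satisfies $\boldsymbol{\theta} \in [\varepsilon,+\infty)^N$ by \cref{lem:IAS_G_epsilon} and lies in $\operatorname{dom}(\mathcal{H})$, while $\mathcal{H} \geq \mathcal{G}$ everywhere supplies the reverse inequality.
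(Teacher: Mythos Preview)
Your proposal is correct and follows essentially the same route as the paper: proper/closed via continuity of $\mathcal{G}$ on the closed set $\mathbb{R}^N\times[\varepsilon,\infty)^N$, coercivity of $\mathcal{G}$ yielding coercivity (and hence bounded level sets) of $\mathcal{H}$, and the minimizer/stationary-point coincidence by reduction to \cref{lem:IAS_G_epsilon}. Your treatment is in fact more thorough than the paper's---the paper dispatches coercivity in a single line ``by observing \cref{eq:IAS_G}'' without the case split you give, and handles the minimizer equivalence by a brief interior/boundary dichotomy rather than your explicit argument that the $\varepsilon$-floor is never binding via \cref{eq:theta_lower}; both approaches are valid, and yours is arguably cleaner.
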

\begin{proof}
    First, by definition, $\mathcal{H}$ is proper since a value of $-\infty$ is never attained and $\operatorname{dom}(\mathcal{G})$ is nonempty. Second, note that $\mathcal{H}$ is continuous on $\operatorname{dom}(\mathcal{H}) = \mathbb{R}^{N} \times [\varepsilon, +\infty)^{N}$ and that $\operatorname{dom}(\mathcal{H})$ is closed. 
    Hence, $\mathcal{H}$ is closed (see, e.g., Theorem 2.8 of \cite{beck2017first}). 
    Third, by observing \cref{eq:IAS_G}, we see that $\mathcal{G} \to +\infty$ as $\| (\mathbf{x}, \boldsymbol{\theta}) \|_2 \to +\infty$. 
    This implies that $\mathcal{H}$ is coercive. 
    Finally, coercivity yields that $\mathcal{H}$ has bounded level sets, i.e., $\operatorname{Lev}(\mathcal{H}, \alpha) = \{ (\mathbf{x}, \boldsymbol{\theta}) \in \mathbb{R}^{2N} \mid \mathcal{H}(\mathbf{x}, \boldsymbol{\theta}) \leq \alpha \}$ is bounded for every $\alpha \in \mathbb{R}$. Since $\mathcal{H}$ is proper, closed, and coercive and $\mathbb{R}^{2N}$ is closed, it follows from the Weierstrass theorem (see \cite[Theorem 2.14]{beck2017first}) that $\mathcal{H}$ attains its minimal value over $\mathbb{R}^{2N}$. 
    
    For the next part of the lemma, let $(\mathbf{x}^*, \boldsymbol{\theta}^*)$ be a local minimizer of $\mathcal{H}$. 
    By definition, this implies that $\mathbf{0}_{2N} \in \partial \mathcal{H}(\mathbf{x}^*, \boldsymbol{\theta}^*)$. This, in turn, requires either $\nabla_{\mathbf{x}, \boldsymbol{\theta}} \mathcal{H}(\mathbf{x}^*, \boldsymbol{\theta}^*) = \mathbf{0}_{2N}$ if $(\mathbf{x}^*, \boldsymbol{\theta}^*) \in \operatorname{Int}(\operatorname{dom}(\mathcal{H}))$, or $(\mathbf{x}^*, \boldsymbol{\theta}^*) \in \operatorname{Bd}(\operatorname{dom}(\mathcal{H}))$ otherwise (this holds if $\theta_i^* = \varepsilon$ for at least one of the $\theta_i^*$). 
    In both cases, invoking the uniqueness of the minimizer for each coordinate update and applying logic similar to that in the proof of \cref{lem:IAS_G_epsilon}, we have that $(\mathbf{x}^*, \boldsymbol{\theta}^*)$ is also a coordinate-wise minimizer of $\mathcal{H}$. The other direction follows similarly.
    
    The last observation to make is that by \cref{lem:IAS_G_epsilon}, we then have that the sets of stationary points of $\mathcal{H}$ and $\mathcal{G}$ are the same. Thus, $\mathcal{G}$ also attains its minimal value on $\mathbb{R}^{2N}$.
\end{proof}

Notably, \cref{lem:IAS_G_epsilon} and \cref{lem:IAS_H_properties} guarantee that the limit points of the iterates produced by \cref{alg:IAS} must be stationary points, even in the nonconvex case.
We can now prove \cref{thm:IAS_G_limit_points}, which for convenience is restated below. 
This will follow with a proof of \cref{thm:IAS_G_convergence_convex_case}.

\begin{proof}[Proof of \cref{thm:IAS_G_limit_points}]
    Although it is possible that $(\mathbf{x}^{(0)}, \boldsymbol{\theta}^{(0)}) \not \in \mathbb{R}^{N} \times [\varepsilon, +\infty)^N$, the subsequence $\{ (\mathbf{x}^{(k)}, \boldsymbol{\theta}^{(k)})  \}_{k \geq 1}$ must be contained in $\mathbb{R}^{N} \times [\varepsilon, +\infty)^N$ since every future coordinate update $\boldsymbol{\theta}^{(k+1)}$ must be contained in $[\varepsilon, +\infty)^N$, where $\varepsilon = \vartheta_{\text{min}} (\eta/r)^{1/r}$. With $\mathcal{H}(\mathbf{x}, \boldsymbol{\theta}) = \mathcal{G}(\mathbf{x}, \boldsymbol{\theta}) + \delta_{[\varepsilon, +\infty)^N}$, this implies that $\mathcal{H}(\mathbf{x}^{(k)}, \boldsymbol{\theta}^{(k)}) = \mathcal{G}(\mathbf{x}^{(k)}, \boldsymbol{\theta}^{(k)})$ for $k \geq 1$. 
    From our discussion in \cref{sub:background_MAP}, note that the solutions to both the coordinate updates for $\mathbf{x}$ and $\boldsymbol{\theta}$ are always unique. 
    Moreover, from \cref{lem:IAS_H_properties}, we have that $\mathcal{H}$ is a proper closed function that is continuous on $\operatorname{dom}(\mathcal{H}) = \mathbb{R}^{N} \times [\varepsilon, +\infty)^N$ and has bounded level sets. 
    These properties imply that $\{ (\mathbf{x}^{(k)}, \boldsymbol{\theta}^{(k)} )\}$ is bounded, and furthermore that any limit point of this sequence is a coordinate-wise minimizer of $\mathcal{H}$ (and also of $\mathcal{G}$). 
    By \cref{lem:IAS_G_epsilon}, any such limit point must also be a local minimizer of $\mathcal{G}$. 
\end{proof}

The fact that \cref{alg:IAS} converges to the unique global minimizer $(\mathbf{x}^{\MAP}, \boldsymbol{\theta}^{\MAP})$ of $\mathcal{G}$ then follows from the uniqueness of the stationary point.


\begin{proof}[Proof of \cref{thm:IAS_G_convergence_convex_case}]
    Since $r \geq 1$ and $\eta > 0$, \cref{thm:previous_convexity_result} implies that $\mathcal{G}$ is globally strictly convex.
    Furthermore, from \cref{thm:IAS_G_limit_points}, we have that $\{ (\mathbf{x}^{(k)}, \boldsymbol{\theta}^{(k)}) \}$ is bounded, and that any of its limit points are stationary points of $\mathcal{G}$. Moreover, since $\mathcal{G}$ is strictly convex and coercive, there exists exactly one such stationary point $(\mathbf{x}^{\MAP}, \boldsymbol{\theta}^{\MAP})$ at which $\mathcal{G}$ achieves its global minimum value. 
    Note that $\{ (\mathbf{x}^{(k)}, \boldsymbol{\theta}^{(k)} ) \}$ satisfies a descent property. 
    That is, the sequence of objective values $\{ \mathcal{G}(\mathbf{x}^{(k)}, \boldsymbol{\theta}^{(k)} ) \}$ is nonincreasing. 
    It is straightforward to show that the strict convexity of $\mathcal{G}$ implies the stronger descent property
    \begin{align*}
        \mathcal{G}(\mathbf{x}^{(k)}, \boldsymbol{\theta}^{(k)}) > \mathcal{G}(\mathbf{x}^{(k)}, \boldsymbol{\theta}^{(k+1)}) > \mathcal{G}(\mathbf{x}^{(k+1)}, \boldsymbol{\theta}^{(k+1)})
    \end{align*}
     if $\mathcal{G}(\mathbf{x}^{(k)}, \boldsymbol{\theta}^{(k)} ) \neq \mathcal{G}(\mathbf{x}^{\MAP}, \boldsymbol{\theta}^{\MAP} )$, i.e., that the sequence of objective values is strictly decreasing unless the minimum value is reached. This implies that $\{ \mathcal{G}(\mathbf{x}^{(k)}, \boldsymbol{\theta}^{(k)} ) \} \to  \mathcal{G}(\mathbf{x}^{\MAP}, \boldsymbol{\theta}^{\MAP} )$ as $k \to \infty$. 
 Observe that since $\mathcal{G}$ is strictly convex, for any $\delta > 0$ there exists some $\varepsilon > 0$ such that 
 \begin{align}
    \| ( \mathbf{x}^{(k)}, \boldsymbol{\theta}^{(k)} ) - ( \mathbf{x}^{\MAP}, \boldsymbol{\theta}^{\MAP} )  \| > \delta \implies | \mathcal{G}(\mathbf{x}^{(k)}, \boldsymbol{\theta}^{(k)}) - \mathcal{G}(\mathbf{x}^{\MAP}, \boldsymbol{\theta}^{\MAP} ) | > \varepsilon.
\end{align}
 Suppose that $\{ (\mathbf{x}^{(k)}, \boldsymbol{\theta}^{(k)}) \} \nrightarrow (\mathbf{x}^{\MAP}, \boldsymbol{\theta}^{\MAP})$ as $k \to \infty.$ Then there exists some $\delta > 0$ such that $\| ( \mathbf{x}^{(k)}, \boldsymbol{\theta}^{(k)} ) - ( \mathbf{x}^{\MAP}, \boldsymbol{\theta}^{\MAP} )  \| > \delta$ for infinitely many $k$.  However, by the initial observation, this implies the existence of an $\varepsilon > 0$ such that 
    \begin{align} 
    | \mathcal{G}(\mathbf{x}^{(k)}, \boldsymbol{\theta}^{(k)} ) - \mathcal{G}(\mathbf{x}^{\MAP}, \boldsymbol{\theta}^{\MAP}) | > \varepsilon 
    \end{align}
    for infinitely many $k$, contradicting the fact that $\{ \mathcal{G}(\mathbf{x}^{(k)}, \boldsymbol{\theta}^{(k)} ) \} \to  \mathcal{G}(\mathbf{x}^{\MAP}, \boldsymbol{\theta}^{\MAP})$ as $k \to \infty$.
    Thus it must be the case that $\{ (\mathbf{x}^{(k)}, \boldsymbol{\theta}^{(k)}) \} \to (\mathbf{x}^{\MAP}, \boldsymbol{\theta}^{\MAP})$ as $k \to \infty$.
    \end{proof}

 \section{Iterative computation of pseudoinverses}\label{app:iterative_pinv_computations}

We discuss iterative methods for computing matrix-vector products with the pseudoinverses $R_{\boldsymbol{\theta}}^{\dagger}$ and $(R_{\boldsymbol{\theta}}^\dagger)^T$. 
For ease of notation, we momentarily drop the dependence of $R_{\boldsymbol{\theta}}$ on $\boldsymbol{\theta}$, simply denoting it by $R$. 
The matrix-vector product $R^\dagger \mathbf{v}$ can be computed by applying the CG method to the solution of
\begin{align}\label{eq:RtR}
    R^T R \mathbf{z} = R^T \mathbf{v}
\end{align} 
for $\mathbf{z} \in \operatorname{col}(R^T)$, intialized at some $\mathbf{z}^{(0)} \in \operatorname{col}(R^T)$ \cite{hestenes1975pseudoinversus, hayami2018convergence}. To this end, the initial value can be simply chosen as $\mathbf{z}^{(0)} = \mathbf{0}_N$. In exact arithmetic, this procedure terminates at some iteration index $k^\star \leq N$ such that $\mathbf{z}^{(k^\star)} = R^\dagger \vecv$. Initializations with $\mathbf{z}^{(0)} \not \in \operatorname{col}(R^T)$ generally cause this procedure to fail, however. 

The matrix-vector product $(R^\dagger)^T \mathbf{v}$ can be computed in a similar way. Recall that the pseudoinverse commutes with transposition, i.e., that $(R^\dagger)^T = (R^T)^\dagger$. Using an analogous construction to \cref{eq:RtR}, $(R^\dagger)^T \mathbf{v}$ can be computed by applying the CG method to the solution of
\begin{align}\label{eq:RRt}
    R R^T \mathbf{z} = R \mathbf{v}
\end{align}
for $\mathbf{z} \in \operatorname{col}(R)$. This time, the initial value should be selected so that $\mathbf{z}^{(0)}  \in  \operatorname{col}(R)$. In exact arithmetic, this procedure again terminates at some iteration index $k^\star \leq K$ such that $\mathbf{z}^{(k^\star)} = (R^\dagger)^T \mathbf{v}$.

A preconditioner can be used to accelerate the convergence of both CG methods. To this end, we note that while nonsingular symmetric preconditioners for singular symmetric systems have been studied (e.g., see \cite{Kaasschieter1988Preconditioned, Pearson2020Preconditioners}),  preconditioning a singular symmetric system with a {\em singular} symmetric preconditioner has been studied considerably less (e.g., see \cite{Pearson2020Preconditioners, Elden2012GMRESSingular, Ranjbar2014CauchyVariableCoeff}). Thus, because of its apparent usefulness in accelerating the convergence of \cref{alg:IAS_noise}, we introduce such a preconditioner here.  First, we provide the following lemma.

\begin{lemma}\label{lem:nonsingular_preconditioning}
    Let $A, M \in \mathbb{R}^{N \times N}$ be symmetric matrices such that $A, M \succeq 0$ (the matrices are positive semi-definite) and $\operatorname{col}(M) = \operatorname{col}(A)$, and let $\mathbf{b} \in \operatorname{col}(A)$. Let $L$ be any matrix such that $M = L L^T$, such as (but not limited to) the spectral square root. Then the problem
    \begin{align}\label{eq:pinv_prob1}
      \text{find } \mathbf{x} \in \operatorname{col}(A) \text{  s.t. }  A \mathbf{x} = \mathbf{b}
    \end{align}
    has a the unique solution $\mathbf{x}^\star = (L^{T})^\dagger \mathbf{z}^\star$, where $\mathbf{z}^\star$ is the unique solution to the problem
    \begin{align}\label{eq:pinv_prob2}
        \text{find }  \mathbf{z} \in \operatorname{col}(A) \text{  s.t. } L^{\dagger} A (L^T)^\dagger \mathbf{z} = L^\dagger \mathbf{b}.
    \end{align}

\end{lemma}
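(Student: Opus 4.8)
The plan is to reduce everything to the orthogonal decomposition $\mathbb{R}^N = V \oplus V^\perp$, where $V := \operatorname{col}(A)$, exploiting that a symmetric positive semi-definite matrix satisfies $\operatorname{col}(A) = \ker(A)^\perp$. First I would dispose of the original problem \cref{eq:pinv_prob1}: since $\mathbf{b} \in \operatorname{col}(A)$ the system $A\mathbf{x} = \mathbf{b}$ is consistent, and any two of its solutions lying in $V$ differ by a vector in $\ker(A) \cap \operatorname{col}(A) = \ker(A) \cap \ker(A)^\perp = \{\mathbf{0}_N\}$; hence \cref{eq:pinv_prob1} has the unique solution $\mathbf{x}^\star = A^\dagger \mathbf{b}$. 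The remaining content is then to verify that the substituted system reproduces exactly this vector.

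Next I would record the column-space bookkeeping that drives the rest. From $M = L L^T$ one has $\operatorname{col}(L) = \operatorname{col}(L L^T) = \operatorname{col}(M)$, which combined with the hypothesis $\operatorname{col}(M) = \operatorname{col}(A)$ gives $\operatorname{col}(L) = V$; for the spectral square root (and any symmetric factor) $L = L^T$, so also $\operatorname{col}(L^T) = V$. The standard pseudoinverse identities then yield $\ker(L^\dagger) = \operatorname{col}(L)^\perp = V^\perp$ and $\operatorname{col}((L^T)^\dagger) = \operatorname{col}(L) = V$, together with $(L^T)^\dagger L^T = P_{\operatorname{col}(L)}$, the orthogonal projector onto $V$.

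The heart of the argument is to show that the back-substitution $\mathbf{x}^\star = (L^T)^\dagger \mathbf{z}^\star$ turns any solution of \cref{eq:pinv_prob2} into the solution of \cref{eq:pinv_prob1}. Given $\mathbf{z}^\star \in V$ solving \cref{eq:pinv_prob2}, I note first that $\mathbf{x}^\star \in \operatorname{col}((L^T)^\dagger) = V$. Rewriting the equation as $L^\dagger(A\mathbf{x}^\star - \mathbf{b}) = \mathbf{0}_N$ shows $A\mathbf{x}^\star - \mathbf{b} \in \ker(L^\dagger) = V^\perp$; but $A\mathbf{x}^\star \in \operatorname{col}(A) = V$ and $\mathbf{b} \in V$, so $A\mathbf{x}^\star - \mathbf{b} \in V \cap V^\perp = \{\mathbf{0}_N\}$, giving $A\mathbf{x}^\star = \mathbf{b}$ and thus $\mathbf{x}^\star = A^\dagger\mathbf{b}$ by the first step. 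For existence of $\mathbf{z}^\star$, I would take $\mathbf{z}^\star = L^T A^\dagger \mathbf{b} \in \operatorname{col}(L^T) = V$: since $A^\dagger \mathbf{b} \in \operatorname{col}(A) = \operatorname{col}(L)$ the projector identity gives $(L^T)^\dagger \mathbf{z}^\star = P_{\operatorname{col}(L)} A^\dagger \mathbf{b} = A^\dagger\mathbf{b}$, whence $L^\dagger A (L^T)^\dagger \mathbf{z}^\star = L^\dagger A A^\dagger \mathbf{b} = L^\dagger \mathbf{b}$ because $A A^\dagger \mathbf{b} = \mathbf{b}$ for $\mathbf{b} \in \operatorname{col}(A)$. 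Uniqueness of $\mathbf{z}^\star$ within $V$ then follows since two solutions $\mathbf{z}_1, \mathbf{z}_2 \in V$ both map to $A^\dagger\mathbf{b}$ under $(L^T)^\dagger$, so $\mathbf{z}_1 - \mathbf{z}_2 \in \ker((L^T)^\dagger) = \operatorname{col}(L^T)^\perp = V^\perp$, which intersected with $V$ forces $\mathbf{z}_1 = \mathbf{z}_2$.

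The main obstacle is purely one of careful pseudoinverse bookkeeping: every cancellation relies on the identity $\operatorname{col}(L^T) = \operatorname{col}(L) = \operatorname{col}(M) = \operatorname{col}(A) = V$ and on the complementary splitting $\mathbb{R}^N = V \oplus V^\perp$ to annihilate residual terms. The delicate point I would be most careful about is ensuring that each of $L^\dagger$, $(L^T)^\dagger$, and $A^\dagger$ is applied on the invariant subspace on which it acts as a genuine inverse, so that no component in $\ker(A) = V^\perp$ is silently created or discarded; this is exactly where the symmetry of $A$ (and of the chosen factor $L$) is used.
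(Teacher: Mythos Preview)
Your argument is essentially the paper's: both identify $\mathbf{x}^\star = A^\dagger \mathbf{b}$ via the bijection $A^\dagger\colon\operatorname{col}(A)\to\operatorname{col}(A)$, set $\mathbf{z}^\star = L^T \mathbf{x}^\star$, and verify the substituted system using the standard pseudoinverse projector identities. Your explicit restriction to symmetric $L$ (so that $\operatorname{col}(L^T)=V$) matches what the paper's proof tacitly assumes when it asserts that $L^\dagger\colon\operatorname{col}(A)\to\operatorname{col}(A)$ is a bijection; this restriction is in fact necessary for the lemma as stated, since for a non-symmetric factor such as $A=M=\operatorname{diag}(1,0)$ with $L=e_1 e_2^T$ one has $(L^T)^\dagger \mathbf{z}=\mathbf{0}$ for every $\mathbf{z}\in\operatorname{col}(A)$ and \cref{eq:pinv_prob2} admits no solution there.
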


\begin{proof}
    We begin by recalling that if $C \in \mathbb{R}^{N \times N}$ is a symmetric matrix then its  pseudoinverse viewed as the restricted map 
    $$C^\dagger : \operatorname{col}(C) \to \ker(C)^\perp$$
    is a bijection. 
    Moreover, since $C$ is symmetric, by the fundamental subspace theorem, we have $\ker(C)^\perp = \operatorname{col}(C^T) = \operatorname{col}(C)$. Thus $C^\dagger$ is a bijection when viewed as a transformation from the range of $C$ to itself. In the context of \cref{eq:pinv_prob1}, since $A$ is symmetric and $\mathbf{b} \in \operatorname{col}(A)$, the unique solution is given by $\mathbf{x}^\star = A^\dagger \mathbf{b}$.
    
    Next, observe that $\operatorname{col}(L) = \operatorname{col}(M) = \operatorname{col}(A)$. Since $L^\dagger$ viewed as a function $L^\dagger : \operatorname{col}(A) \to \operatorname{col}(A)$ is a bijection, there must exist some $\mathbf{z}^\star \in \operatorname{col}(A)$ such that $\mathbf{z}^\star = L^T \mathbf{x}^\star$ and $\mathbf{x}^\star = (L^T)^\dagger \mathbf{z}^\star$. 
    Due to the bijection property of $A^\dagger$, which extends to $L^\dagger A (L^T)^\dagger$, \cref{eq:pinv_prob2} has the unique solution $\mathbf{z}^\star$ given by 
    \begin{align}
        L^\dagger A (L^T)^\dagger \mathbf{z}^\star =  L^\dagger A \mathbf{x}^\star = L^\dagger \mathbf{b}.
    \end{align}
\end{proof}
Similar to the preconditioned CG method for the nonsingular symmetric case, by examining the preconditioned CG method resulting from \cref{lem:nonsingular_preconditioning}, we observe that the method can be written such that it avoids referencing $L^\dagger$ altogether and only requires matrix-vector products with $M^{\dagger}$. Thus, the preconditioned CG method for the singular symmetric system is just the standard preconditioned CG method with the simple modifications of replacing $M^{-1}$ with $M^\dagger$ and requiring that the method is initialized with $\mathbf{x}^{(0)} \in \operatorname{col}(A)$.
Here, these CG methods for computing pseudoinverse matrix-vector products are embedded as an inner-loop within an outer-loop CG method computing the solution to \cref{eq:IAS_x_update_least_whitened}. Note that these CG methods for computing matrix-vector products with pseudoinverses must be equipped with a stopping criterion. Furthermore, because of the inherent nature of iterative methods, these matrix-vector products will not be computed exactly. As such, some care is needed w.r.t. the relationship between the stopping criteria of the inner- and outer-loop CG methods. For one such study, see \cite{Golub1999Inexact}.

\subsection*{Case study: Two-dimensional Neumann gradient with preconditioning}

Although our method is suitable for any sparse transform operator $R$ satisfying \cref{eq:common_kernel_condition}, we now focus on the case where $R$ represents a two-dimensional discrete gradient operator equipped with Neumann boundary conditions (given by \cref{eq:RD2}) for a uniform $N_1 \times N_2$ grid. 
The operator $R^T R$ can then be interpreted (up to a scale factor) as a discretized Laplacian operator $\Delta$. Similarly, $R_{\boldsymbol{\theta}}^T R_{\boldsymbol{\theta}}$ can be interpreted as a weighted Laplacian $\nabla \cdot ( \mathbf{w}(\mathbf{x}) \nabla )$. 

If the grid is sufficiently large, such as in the imaging context, the aforementioned CG methods for computing matrix-vector products with $R_{\boldsymbol{\theta}}^\dagger$ and $(R_{\boldsymbol{\theta}}^\dagger)^T$ may require many iterations until a sufficient approximation is reached. We would like to implement a preconditioner for the CG method to reduce the computational burden. A natural spectral preconditioner for the matrix $R_{\boldsymbol{\theta}}^T R_{\boldsymbol{\theta}}$ can be derived from the unweighted Laplacian $R^T R$. The matrix  $R^T R$ is a sum of specially-structured matrices,\footnote{Specifically, in the Neumann boundary condition case $R^T R$ can be written as the sum of block Toeplitz with Toeplitz blocks (BTTB), block Toeplitz with Hankel blocks (BTHB), block Hankel with Toeplitz blocks (BHTB), and block Hankel with Hankel blocks (BHHB) matrices \cite{hansen2006deblurring}. In the Dirichlet boundary condition case, $R^T R$ is a block Toeplitz with Toeplitz blocks (BTTB) matrix and can be diagonalized by the type I discrete sine transform (DST). In the periodic boundary condition case, $R^T R$ is a block circulant with circulant blocks (BCCB) matrix and can be diagonalized by the discrete Fourier transform (DFT).} such that it can be diagonalized \emph{a priori} by the (orthonormal, type II) two-dimensional discrete cosine transform (DCT) (e.g., see \cite{hansen2006deblurring, Strang1999DCT, Makhoul1980FastCosine}). Specifically, letting $B$ denote the DCT for a $N_1 \times N_2$ grid, it holds that
\begin{align}\label{eq:dct_diagonalization}
  M = R^T R = B^T \Lambda B,
\end{align}
where $\Lambda$ is a diagonal matrix with nonnegative entries containing the eigenvalues of $R^T R$, and $B^T = B^{-1}$ denotes the inverse DCT. Note that the eigenvalues are quickly computed as $\Lambda = \operatorname{diag}( (B R^T R B^T \mathbf{v}) \oslash \mathbf{v} )$ for a vector $\mathbf{v} \in \mathbb{R}^N$ with nonzero entries and $\oslash$ denoting component-wise division. We are therefore motivated to use $M^\dagger = (R^T R)^{\dagger} = B^T \Lambda^\dagger B$ as a preconditioner when applying $R_{\boldsymbol{\theta}}^\dagger$ via the CG method applied to \cref{eq:RtR} since it can be applied directly and cheaply in $\mathcal{O}(N_1 N_2 \log(N_1 N_2))$ flops.
Consider \cref{lem:nonsingular_preconditioning} with $A = R_{\boldsymbol{\theta}}^T R_{\boldsymbol{\theta}}$ and $\mathbf{b} = R_{\boldsymbol{\theta}}^T \mathbf{v}$. Clearly $\mathbf{b} \in \operatorname{col}(A)$. Now, let $L = B^T \Lambda^{1/2}$ in \cref{eq:dct_diagonalization}. \cref{lem:nonsingular_preconditioning} then justifies the use of the preconditioned CG method for a singular symmetric system to compute the matrix-vector product $R_{\boldsymbol{\theta}}^\dagger \mathbf{v}$ with preconditioner $M^\dagger = B^T \Lambda^{\dagger} B$. We also require a method for computing matrix-vector products of the form $(R_{\boldsymbol{\theta}}^\dagger)^T \mathbf{v}$. 
One option is to take the approach described via \cref{eq:RRt}, wherein the CG method is applied to the solution of
\begin{align}\label{eq:zsolve}
    R_{\boldsymbol{\theta}} R_{\boldsymbol{\theta}}^T \mathbf{z} = R_{\boldsymbol{\theta}} \mathbf{v}
\end{align}
for $\mathbf{z} \in \operatorname{col}(R)$, with an initialization $\mathbf{z}^{(0)} \in \operatorname{col}(R)$. Unfortunately, $R_{\boldsymbol{\theta}} R_{\boldsymbol{\theta}}^T$ is no longer a Laplacian operator (in the continuous setting, this corresponds to the gradient of the divergence), and unlike in the previous case, \emph{there is no exploitable structure} in the corresponding unweighted operator $R R^T$ from which we can derive a spectral preconditioner. 
However, by use of pseudoinverse identities it is in fact possible to ``recycle'' the same spectral preconditioner as used for $R_{\boldsymbol{\theta}}^T R_{\boldsymbol{\theta}}$. In this regard observe that for any $C \in \mathbb{R}^{K \times N}$, we have  
$$(C^\dagger)^T = (C^T)^\dagger \text{ and } (C^\dagger)^T = C (C^T C)^\dagger,$$
so that the solution $\mathbf{z}$ of \cref{eq:zsolve} can be expressed as  $\mathbf{z} = R_{\boldsymbol{\theta}} ( R_{\boldsymbol{\theta}}^T R_{\boldsymbol{\theta}} )^\dagger \mathbf{v}$. Decomposing the CG method as two algorithms acting separately on the column and kernel spaces  \cite{hayami2018convergence}, the matrix-vector product  $\mathbf{u} = (R_{\boldsymbol{\theta}}^T R_{\boldsymbol{\theta}})^\dagger \mathbf{v}$ can be computed by applying the CG method to the solution of the singular system
\begin{align}
    R_{\boldsymbol{\theta}}^T R_{\boldsymbol{\theta}} \mathbf{u} = (I_N - W W^\dagger) \mathbf{v}
\end{align}
for $\mathbf{u} \in \operatorname{col}(R^T)$, with any initialization $\mathbf{u}^{(0)} \in \operatorname{col}(R^T)$ and any matrix $W$ such that $\operatorname{col}(W) = \ker(R)$. Note that $W$ can be chosen to be the same matrix described earlier in \cref{sec:obliqueprojection}, and that $I_N - W W^\dagger$ represents an orthogonal projection operator onto the subspace $\ker(R)^\perp = \operatorname{col}(R^T)$. 

We conclude this section with a brief numerical experiment demonstrating the efficacy of the spectral preconditioner for the singular system. Letting $N_1 = N_2$ and $N = N_1^2$,  \cref{fig:lapinv_results} provides the number of CG iterations and wall-clock time needed to compute the matrix-vector product $R_{\boldsymbol{\theta}}^\dagger \mathbf{v}$ for varying $N$. Here we have averaged the results over 50 test vectors $\mathbf{v} \sim \mathcal{N}(\veczero_N, I_N)$ and hyperparameter vectors $\theta_i \overset{\text{ind}}{\sim} \mathcal{U}([1,50])$ for $i = 1, \ldots, K$, terminating the CG methods at the first iteration $k^\star$ such that 
\begin{align}
    \frac{ \| R_{\boldsymbol{\theta}}^T R_{\boldsymbol{\theta}} \mathbf{z}^{(k^\star)} - R_{\boldsymbol{\theta}}^T \mathbf{v} \|_2  }{ \| R_{\boldsymbol{\theta}}^T \mathbf{v} \|_2  } < 10^{-5}.
\end{align}
Since the preconditioner involves the DCT, an operation that is particularly efficient to perform on GPU computing architectures, we have also shown the wall-clock time results using both CPU and GPU computing architectures. The CPU computations were performed on a 2018 MacBook Pro with a 2.3 GHz Quad-Core Intel Core i5 processor and 8 GB of memory, and the GPU computations were performed on a computing cluster equipped with a single NVIDIA Tesla V100-SXM2-32GB GPU.

In light of \cref{fig:lapinv_results}, clearly the spectral preconditioning method drastically decreases the number of CG iterations needed and reduces the wall clock time required to compute the matrix-vector product with the pseudoinverse. This observation is more pronounced for extremely large $N$ and for the GPU computing architecture. In fact, the wall-clock time needed to compute the pseudoinverse with the spectral preconditioner and a GPU architecture is less than a tenth of a second for grids with dimensions as large as $1000 \times 1000$. For a comparison with a direct method as a benchmark, we have included results for the $\delta
$-approximation given in \cref{eq:approx_pseudoinverse}, which we implement using a banded Cholesky factorization costing $\mathcal{O}(N^{3/2})$. We observe that the Cholesky method is the preferred method for $N$ corresponding to grids with dimension smaller than about $56 \times 56$, but quickly becomes infeasible for larger grids due to the growth in cost.

\begin{figure}[tb]
    \centering
    \includegraphics[width=.99\textwidth]{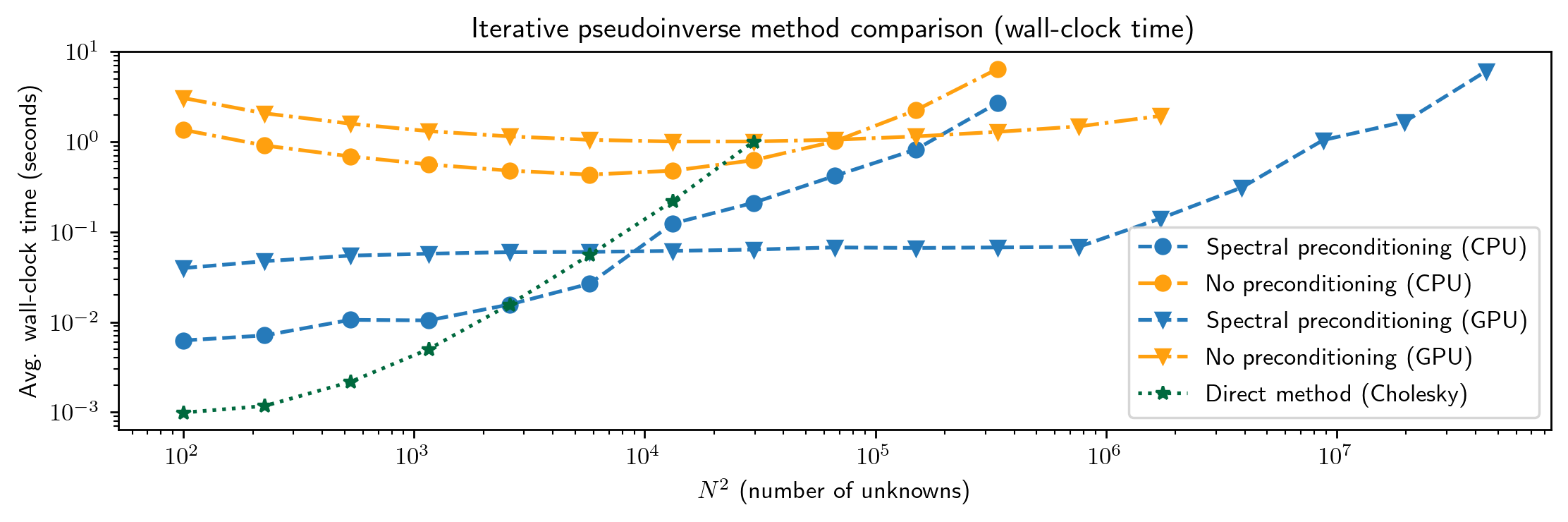}
    \caption{A comparison of the performance (in terms of wall-clock time) of iterative methods for computing matrix-vector products of the form $R_{\boldsymbol{\theta}}^\dagger \mathbf{v}$. We compare the CG method with and without the spectral preconditioner, carried out on both a CPU and GPU. We also provide results for a direct method using a banded Cholesky factorization as a benchmark.}
    \label{fig:lapinv_results}
\end{figure}

\section{Proof of \cref{thm:main_convexity}}
\label{app:main_convexity_convergence} We first re-write $\mathcal{G}$ in \cref{eq:IAS_G_with_noise_variance} as 
\begin{equation}\label{eq:IAS_noise_convexity1}
    \mathcal{G}( \mathbf{x}, \boldsymbol{\theta}, \nu ) 
        = \mathcal{G}_0( \mathbf{x}, \nu )
        + \mathcal{G}_1( \mathbf{x}, \boldsymbol{\theta} ), 
\end{equation}
with $\mathcal{G}_0$ and $\mathcal{G}_1$ respectively given by 
\begin{equation}\label{eq:IAS_noise_convexity2}
\begin{aligned} 
    \mathcal{G}_0( \mathbf{x}, \nu ) 
	& = \frac{1}{2 \nu} \| F \mathbf{x} - \mathbf{y} \|_2^2 
        + \left( \frac{\nu}{\tilde{\vartheta}} \right)^{\tilde{r}}  
        - \left( \tilde{r} \tilde{\beta} - [M+2]/2 \right) \log( \nu ) + \delta_{\mathbb{R}_{+}}(\nu), \\ 
    \mathcal{G}_1( \mathbf{x}, \boldsymbol{\theta} ) 
	& = \frac{1}{2} \| D_{\boldsymbol{\theta}}^{-1/2} R \mathbf{x} \|_2^2 
        + \sum_{i=1}^K \left( \frac{\theta_i}{\vartheta_i} \right)^{r} 
        - \left( r \beta - 3/2 \right) \sum_{i=1}^K \log( \theta_i ) + \delta_{\mathbb{R}_{+}^K}(\boldsymbol{\theta}). 
\end{aligned}
\end{equation} 
Observe that $\mathcal{G}$ is convex if $\mathcal{G}_0$ and $\mathcal{G}_1$ are convex. 
We proceed by addressing the convexity of $\mathcal{G}_1$ and $\mathcal{G}_0$ in \cref{lem:noise_convexity1,lem:noise_convexity2}, respectively, and combine these results to show \cref{thm:main_convexity}.

\begin{lemma}\label{lem:noise_convexity1}
    Let $r \in \R \setminus\{ 0 \}$ and $\beta > 0$. 
    Furthermore, let $\mathcal{G}_1(\mathbf{x},\boldsymbol{\theta})$ be the objective function in \cref{eq:IAS_noise_convexity2} and let $\eta = r \beta - 3/2$. 
    \begin{enumerate}
	   \item 
	   If $r \geq 1$ and $\eta > 0$, then $\mathcal{G}_1(\mathbf{x},\boldsymbol{\theta})$ is globally strictly convex. 

	   \item 
	   If $0 < r < 1$ and $\eta > 0$, or, if $r < 0$, then $\mathcal{G}_1(\mathbf{x},\boldsymbol{\theta})$ is locally convex at $(\mathbf{x}, \boldsymbol{\theta})$  provided that
	   \begin{equation}
		  \theta_i < \vartheta_i \left( \frac{\eta}{r | r - 1 |} \right)^{1/r}, \quad i=1,\ldots,K.
	   \end{equation}
	
    \end{enumerate} 	
\end{lemma}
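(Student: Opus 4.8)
The plan is to exploit the separable-plus-coupling structure of the objective $\mathcal{G}_1$ from \cref{eq:IAS_noise_convexity2} and reduce the whole question to the scalar convexity of a perspective-type function. Writing $u_i = [R\mathbf{x}]_i$ and $g_i(\theta) = (\theta/\vartheta_i)^r - \eta\log\theta$, I would first express $\mathcal{G}_1(\mathbf{x},\boldsymbol{\theta}) = \Phi(R\mathbf{x},\boldsymbol{\theta})$, where the reduced function $\Phi(\mathbf{u},\boldsymbol{\theta}) = \sum_{i=1}^{K}\phi_i(u_i,\theta_i)$ splits into $K$ independent pieces
\[
    \phi_i(u,\theta) = \frac{u^2}{2\theta} + g_i(\theta) + \delta_{\mathbb{R}_{+}}(\theta).
\]
Since $(\mathbf{x},\boldsymbol{\theta})\mapsto(R\mathbf{x},\boldsymbol{\theta})$ is linear, convexity of $\mathcal{G}_1$ reduces to convexity of $\Phi$, hence of each $\phi_i$ on $\mathbb{R}\times\mathbb{R}_{++}$, where $\mathcal{G}_1$ is smooth.

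Next I would analyze each $\phi_i$ through its $2\times 2$ Hessian on the open region $\theta>0$,
\[
    H_i = \begin{pmatrix} 1/\theta & -u/\theta^2 \\ -u/\theta^2 & u^2/\theta^3 + g_i''(\theta) \end{pmatrix}, \qquad \det H_i = \frac{g_i''(\theta)}{\theta}.
\]
Because the leading entry $1/\theta$ is positive, $H_i\succ 0$ exactly when $g_i''(\theta)>0$ and $H_i\succeq 0$ when $g_i''(\theta)\geq 0$. A direct computation gives $g_i''(\theta) = \theta^{-2}\left[\, r(r-1)(\theta/\vartheta_i)^r + \eta \,\right]$, so everything collapses to the sign of $S_i(\theta) = r(r-1)(\theta/\vartheta_i)^r + \eta$.

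The sign analysis is then a routine case split that reproduces the $R=I_N$ computation behind \cref{thm:previous_convexity_result}. If $r\geq 1$ and $\eta>0$, then $r(r-1)\geq 0$ forces $S_i(\theta)\geq\eta>0$ for all $\theta>0$, so each $\phi_i$---and hence $\Phi$---is strictly convex. If $0<r<1$ with $\eta>0$, the coefficient $r(r-1)$ is negative; if $r<0$, then $\eta=r\beta-3/2<-3/2<0$ (since $\beta>0$) while $r(r-1)>0$. In either subcase I would solve $S_i(\theta)>0$ for $\theta$, using $|r-1|=1-r$ together with the monotonicity of $t\mapsto t^{1/r}$ (increasing for $r>0$, decreasing for $r<0$) to preserve or flip the inequality appropriately; both subcases yield exactly the stated threshold $\theta_i<\vartheta_i(\eta/(r|r-1|))^{1/r}$, which is precisely the region where $g_i''(\theta_i)>0$.

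The main obstacle is the strictness asserted in item (1) once $R$ has a nontrivial kernel. Even when every $\phi_i$ is strictly convex, the precomposition $\mathbf{x}\mapsto R\mathbf{x}$ is injective only when $\ker(R)=\{\mathbf{0}_N\}$: the $\mathbf{x}$-block of the Hessian of $\mathcal{G}_1$ equals $R^{T}D_{\boldsymbol{\theta}}^{-1}R$, which annihilates every direction $(\mathbf{v},\mathbf{0})$ with $R\mathbf{v}=\mathbf{0}$. Thus $\mathcal{G}_1$ on its own is strictly convex only modulo $\ker(R)\times\{\mathbf{0}\}$. I would therefore prove positive-definiteness of the Hessian on the complement of these directions and defer the recovery of full strict convexity to the assembly step in \cref{thm:main_convexity}, where the term $\mathcal{G}_0$ contributes curvature $\nu^{-1}F^{T}F$ along $\ker(R)$; the common kernel condition \cref{eq:common_kernel_condition}, namely $\ker(F)\cap\ker(R)=\{\mathbf{0}_N\}$, then guarantees that $F$ is injective on $\ker(R)$ so that no flat direction survives in the full objective $\mathcal{G}$.
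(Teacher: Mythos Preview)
Your argument is essentially the paper's: both reduce to positive (semi-)definiteness of the Hessian and hence to the sign of $r(r-1)(\theta/\vartheta_i)^r+\eta$, with the paper completing the square in the full quadratic form $\mathbf{w}^T H_{\mathcal{G}_1}\mathbf{w}$ while you factor through the linear map $(\mathbf{x},\boldsymbol{\theta})\mapsto(R\mathbf{x},\boldsymbol{\theta})$ and inspect the separable $2\times 2$ blocks---the same computation in slightly different packaging. Your observation that strict convexity of $\mathcal{G}_1$ alone fails along $\ker(R)\times\{\mathbf{0}\}$ when $R$ has nontrivial kernel, and must be recovered from the $\nu^{-1}F^TF$ curvature of $\mathcal{G}_0$ via the common kernel condition \cref{eq:common_kernel_condition}, is correct and is in fact a point the paper's own proof of the lemma glosses over.
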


\begin{proof} The proof follows the same logic as that of \cref{thm:previous_convexity_result}, originally given in \cite{calvetti2020sparse}.
    For completeness, we provide the most important steps here. 
    Since $\mathcal{G}_1$ is smooth on $\operatorname{Int}(\operatorname{dom}(\mathcal{G}_1))$, $\mathcal{G}_1$ is convex if and only if its Hessian is positive definite. 
    To this end, let $\mathbf{u} \in \R^N$ and $\mathbf{v} \in \R^K$ and $\small \mathbf{w} = [ \mathbf{u}; \mathbf{v} ]$. 
    To determine when the Hessian $H_{\mathcal{G}_1}(\mathbf{x}, \boldsymbol{\theta})$ is positive definite, we express the relevant quadratic form as
    \begin{eqnarray*} 
    \mathbf{w}^T H_{\mathcal{G}_1}(\mathbf{x}, \boldsymbol{\theta}) \mathbf{w}  
            = \sum_{i=1}^K \theta_i^{-1} \left( [R \mathbf{u}]_i - \theta_i^{-1} v_i [R \mathbf{x}]_i \right)^2 + \sum_{i=1}^K v_i^2 \left( \eta \theta_i^{-2} + \frac{r (r - 1)}{\vartheta_i^{r}} \theta_i^{r - 2} \right).
    \end{eqnarray*}
    Clearly, the first term on the right-hand side of the last equation are always non-negative, while the last term is non-negative provided that $\eta \theta_i^{-2} + \frac{r (r - 1)}{\vartheta_i^{r}} \theta_i^{r -2} > 0$ for $i = 1, \ldots, K$, yielding the assertion.
\end{proof}

\begin{lemma}\label{lem:noise_convexity2}
    Let $\tilde{r} \in \R \setminus\{ 0 \}$ and $\tilde{\beta} > 0$. 
    Furthermore, let $\mathcal{G}_0(\mathbf{x},\nu)$ be the objective function in \cref{eq:IAS_noise_convexity2} and let $\tilde{\eta} = \tilde{r} \tilde{\beta} - [M+2]/2$. 
    \begin{enumerate}
        \item 
	  If $\tilde{r} \geq 1$ and $\tilde{\eta} > 0$, then $\mathcal{G}_0(\mathbf{x},\nu)$ is globally strictly convex. 

	  \item 
	  If $0 < \tilde{r} < 1$ and $\tilde{\eta} > 0$, or, if $\tilde{r} < 0$, then $\mathcal{G}_0(\mathbf{x},\nu)$ is locally convex at $(\mathbf{x}, \nu)$ provided that
	  \begin{equation}
            \nu < \tilde{\vartheta} \left( \frac{ \tilde{\eta} }{ \tilde{r} | \tilde{r} - 1 |} \right)^{1/\tilde{r}}.
	  \end{equation}
    \end{enumerate} 	
\end{lemma}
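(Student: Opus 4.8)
The plan is to mirror the Hessian-based argument used for $\mathcal{G}_1$ in \cref{lem:noise_convexity1}, the essential new ingredient being the coupling between $\mathbf{x}$ and $\nu$ introduced by the data-misfit term $\frac{1}{2\nu}\|F\mathbf{x}-\mathbf{y}\|_2^2$. Since $\mathcal{G}_0$ is smooth on $\operatorname{Int}(\operatorname{dom}(\mathcal{G}_0)) = \mathbb{R}^N \times \mathbb{R}_{++}$ and the indicator $\delta_{\mathbb{R}_{+}}(\nu)$ restricts attention to $\nu > 0$, convexity is equivalent to positive semidefiniteness of the Hessian $H_{\mathcal{G}_0}(\mathbf{x},\nu)$ on this open set. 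First I would compute the relevant quadratic form. Writing a direction as $\mathbf{w} = [\mathbf{u}; s]$ with $\mathbf{u} \in \mathbb{R}^N$ and $s \in \mathbb{R}$ the perturbation in $\nu$, and using $\nabla^2_{\mathbf{x}}\mathcal{G}_0 = \nu^{-1}F^TF$, $\partial_\nu\nabla_{\mathbf{x}}\mathcal{G}_0 = -\nu^{-2}F^T(F\mathbf{x}-\mathbf{y})$, and $\partial^2_\nu\mathcal{G}_0 = \nu^{-3}\|F\mathbf{x}-\mathbf{y}\|_2^2 + \tilde r(\tilde r-1)\tilde\vartheta^{-\tilde r}\nu^{\tilde r-2} + \tilde\eta\nu^{-2}$, I would verify the key completion-of-square identity
\begin{equation*}
  \mathbf{w}^T H_{\mathcal{G}_0}(\mathbf{x},\nu)\,\mathbf{w} = \frac{1}{\nu}\Bigl\| F\mathbf{u} - \frac{s}{\nu}(F\mathbf{x}-\mathbf{y}) \Bigr\|_2^2 + \frac{s^2}{\nu^2}\Bigl( \tilde\eta + \tilde r(\tilde r-1)\Bigl(\frac{\nu}{\tilde\vartheta}\Bigr)^{\tilde r} \Bigr).
\end{equation*}
This is the exact analogue of the expression for $H_{\mathcal{G}_1}$, with the $F$-weighted square playing the role of the $R$-weighted square there.

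With this identity in hand the remainder is a sign analysis identical in spirit to that of \cref{lem:noise_convexity1}. The first term is always nonnegative, so $H_{\mathcal{G}_0} \succeq 0$ precisely when the scalar coefficient $c(\nu) \coloneqq \tilde\eta + \tilde r(\tilde r-1)(\nu/\tilde\vartheta)^{\tilde r}$ is nonnegative. For part (1), $\tilde r \geq 1$ forces $\tilde r(\tilde r-1)\geq 0$, and since $\tilde\eta > 0$ we get $c(\nu) > 0$ for every $\nu > 0$; the second term is then strictly positive whenever $s \neq 0$, and the first term is positive whenever $F\mathbf{u} \neq 0$, giving convexity. For part (2), I would rewrite $\tilde r(\tilde r-1) = -\,\tilde r\,|\tilde r-1|$, valid both when $0<\tilde r<1$ and when $\tilde r < 0$, so that $c(\nu) \geq 0$ becomes $\tilde r|\tilde r-1|(\nu/\tilde\vartheta)^{\tilde r} \leq \tilde\eta$; solving this for $\nu$ (tracking the sign of $\tilde r$, which flips the inequality when raising to the power $1/\tilde r$ for $\tilde r<0$) yields exactly $\nu < \tilde\vartheta(\tilde\eta/(\tilde r|\tilde r-1|))^{1/\tilde r}$, and local convexity follows on the region where this holds. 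The absolute value $|\tilde r-1|$ in the stated bound is precisely what unifies the two sign regimes.

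The step I expect to be the main obstacle — and the only genuinely new computation relative to \cref{lem:noise_convexity1} — is verifying the completion-of-square identity, i.e., confirming that the three contributions of $\frac{1}{2\nu}\|F\mathbf{x}-\mathbf{y}\|_2^2$ (the $\mathbf{x}\mathbf{x}$-block, the cross block, and the $\nu\nu$-block) assemble into a single $F$-weighted square. I would organize this by setting $\mathbf{p} = F\mathbf{u}$ and $\mathbf{g} = F\mathbf{x} - \mathbf{y}$ and expanding $\nu^{-1}\|\mathbf{p} - (s/\nu)\mathbf{g}\|_2^2$ to match coefficients. One subtlety worth flagging is the word \emph{strictly}: since $\mathcal{G}_0$ depends on $\mathbf{x}$ only through $F\mathbf{x}$, its Hessian is singular on $\ker F \times \{0\}$ whenever $\ker F \neq \{\mathbf{0}_N\}$, so $\mathcal{G}_0$ alone is strictly convex only when $F$ is injective. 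The global strict convexity of the full objective $\mathcal{G} = \mathcal{G}_0 + \mathcal{G}_1$ asserted in \cref{thm:main_convexity} is then recovered in the combination step, where the $\mathbf{x}\mathbf{x}$-block $\nu^{-1}F^TF + R^T D_{\boldsymbol{\theta}}^{-1}R$ is positive definite under the common kernel condition \cref{eq:common_kernel_condition}.
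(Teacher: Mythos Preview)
Your proposal is correct and takes essentially the same approach as the paper: the paper's proof also writes the quadratic form of $H_{\mathcal{G}_0}$ as the sum $\nu^{-1}\|F\mathbf{u} - v\nu^{-1}(F\mathbf{x}-\mathbf{y})\|_2^2 + v^2\bigl(\tilde\eta\nu^{-2} + \tilde r(\tilde r-1)\tilde\vartheta^{-\tilde r}\nu^{\tilde r-2}\bigr)$ and reads off the sign conditions from the second term. Your additional remark about strict convexity failing for $\mathcal{G}_0$ alone on $\ker F\times\{0\}$ is a valid caveat that the paper does not make explicit; it does not affect the overall argument since, as you note, the combination step with $\mathcal{G}_1$ and the common kernel condition \cref{eq:common_kernel_condition} recovers strict convexity of $\mathcal{G}$.
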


\begin{proof} The proof follows the same logic as before.
    This time, the relevant quadratic form can be expressed as 
    \begin{equation}\label{eq:IAS_noise_convexity4} 
    \begin{aligned}
        \mathbf{w}^T H_{\mathcal{G}_0}(\mathbf{x}, \nu) \mathbf{w}  
            = \nu^{-1} \| F \mathbf{u} - v \nu^{-1} (F \mathbf{x} - \mathbf{y})  \|_2^2 + v^2 \left( \tilde{\eta} \nu^{-2} + \frac{ \tilde{r} ( \tilde{r} - 1)}{ \tilde{\vartheta}^{ \tilde{r} }} \nu^{ \tilde{r} - 2} \right).
    \end{aligned}
    \end{equation}
    The first term at the right-hand side of \cref{eq:IAS_noise_convexity4} is always non-negative, while the second term is non-negative if $\tilde{\eta} \nu^{-2} + \frac{ \tilde{r} ( \tilde{r} - 1)}{ \tilde{\vartheta}^{\tilde{r}}} \nu^{ \tilde{r} - 2} > 0$, yielding the assertion. 
\end{proof}

Finally, by combining \cref{lem:noise_convexity1,lem:noise_convexity2}, we get convexity conditions for $\mathcal{G}( \mathbf{x}, \boldsymbol{\theta}, \nu ) = \mathcal{G}_0( \mathbf{x}, \nu ) + \mathcal{G}_1( \mathbf{x}, \boldsymbol{\theta} )$,  therefore proving \cref{thm:main_convexity}.

\bibliographystyle{siamplain}
\bibliography{references}

\begin{thebibliography}{10}

\bibitem{archibald2016image}
{\sc R.~Archibald, A.~Gelb, and R.~B. Platte}, {\em Image reconstruction from
  undersampled {F}ourier data using the polynomial annihilation transform},
  Journal of Scientific Computing, 67 (2016), pp.~432--452.

\bibitem{archibald2005polynomial}
{\sc R.~Archibald, A.~Gelb, and J.~Yoon}, {\em Polynomial fitting for edge
  detection in irregularly sampled signals and images}, SIAM Journal on
  Numerical Analysis, 43 (2005), pp.~259--279.

\bibitem{Attouch2013Convergence}
{\sc H.~Attouch, J.~Bolte, and B.~F. Svaiter}, {\em Convergence of descent
  methods for semi-algebraic and tame problems: proximal algorithms,
  forward--backward splitting, and regularized gauss--seidel methods},
  Mathematical Programming, 137 (2013), pp.~91--129.

\bibitem{babacan2010sparse}
{\sc S.~D. Babacan, R.~Molina, and A.~K. Katsaggelos}, {\em Sparse {B}ayesian
  image restoration}, in 2010 IEEE International Conference on Image
  Processing, IEEE, 2010, pp.~3577--3580.

\bibitem{Bardsley2012mcmcimaging}
{\sc J.~M. Bardsley}, {\em {MCMC}-based image reconstruction with uncertainty
  quantification}, SIAM Journal on Scientific Computing, 34 (2012),
  pp.~A1316--A1332.

\bibitem{beck2017first}
{\sc A.~Beck}, {\em First-Order Methods in Optimization}, SIAM, 2017.

\bibitem{Bertsekas1995NonlinearP}
{\sc D.~Bertsekas}, {\em Nonlinear Programming}, Athena Scientific, 1999.

\bibitem{Briggs2000Multigrid}
{\sc W.~L. Briggs, V.~E. Henson, and S.~F. McCormick}, {\em A Multigrid
  Tutorial}, SIAM, 2000.

\bibitem{Brown2018lowrank}
{\sc D.~A. Brown, A.~Saibaba, and S.~Vall{\'e}lian}, {\em Low-rank independence
  samplers in hierarchical {B}ayesian inverse problems}, SIAM/ASA Journal on
  Uncertainty Quantification, 6 (2018), pp.~1076--1100.

\bibitem{calvetti2009conditionally}
{\sc D.~Calvetti, H.~Hakula, S.~Pursiainen, and E.~Somersalo}, {\em
  Conditionally {G}aussian hypermodels for cerebral source localization}, SIAM
  Journal on Imaging Sciences, 2 (2009), pp.~879--909.

\bibitem{Calvetti2006Aristotelian}
{\sc D.~Calvetti, J.~Kaipio, and E.~Somersalo}, {\em Aristotelian prior
  boundary conditions}, International Journal of Mathematics and Computer
  Science, 1 (2006), pp.~63--81.

\bibitem{calvetti2015hierarchical}
{\sc D.~Calvetti, A.~Pascarella, F.~Pitolli, E.~Somersalo, and B.~Vantaggi},
  {\em A hierarchical {K}rylov--{B}ayes iterative inverse solver for {MEG} with
  physiological preconditioning}, Inverse Problems, 31 (2015), p.~125005.

\bibitem{calvetti2018bayes}
{\sc D.~Calvetti, F.~Pitolli, E.~Somersalo, and B.~Vantaggi}, {\em Bayes meets
  {K}rylov: Statistically inspired preconditioners for {CGLS}}, SIAM Review, 60
  (2018), pp.~429--461.

\bibitem{calvetti2020sparsity}
{\sc D.~Calvetti, M.~Pragliola, and E.~Somersalo}, {\em Sparsity promoting
  hybrid solvers for hierarchical {B}ayesian inverse problems}, SIAM Journal on
  Scientific Computing, 42 (2020), pp.~A3761--A3784.

\bibitem{calvetti2020sparse}
{\sc D.~Calvetti, M.~Pragliola, E.~Somersalo, and A.~Strang}, {\em Sparse
  reconstructions from few noisy data: analysis of hierarchical {B}ayesian
  models with generalized gamma hyperpriors}, Inverse Problems, 36 (2020),
  p.~025010.

\bibitem{calvetti2007gaussian}
{\sc D.~Calvetti and E.~Somersalo}, {\em A {G}aussian hypermodel to recover
  blocky objects}, Inverse Problems, 23 (2007), p.~733.

\bibitem{calvetti2023bayesian}
{\sc D.~Calvetti and E.~Somersalo}, {\em Bayesian Scientific Computing},
  vol.~215, Springer Nature, 2023.

\bibitem{calvetti2024computationally}
{\sc D.~Calvetti and E.~Somersalo}, {\em Computationally efficient sampling
  methods for sparsity promoting hierarchical {B}ayesian models}, SIAM/ASA
  Journal on Uncertainty Quantification, 12 (2024), pp.~524--548.

\bibitem{calvetti2019hierachical}
{\sc D.~Calvetti, E.~Somersalo, and A.~Strang}, {\em Hierachical {B}ayesian
  models and sparsity: $\ell_2$-magic}, Inverse Problems, 35 (2019), p.~035003.

\bibitem{chantas2006bayesian}
{\sc G.~K. Chantas, N.~P. Galatsanos, and A.~C. Likas}, {\em Bayesian
  restoration using a new nonstationary edge-preserving image prior}, IEEE
  Transactions on Image Processing, 15 (2006), pp.~2987--2997.

\bibitem{Dong2023Horshoe}
{\sc Y.~Dong and M.~Pragliola}, {\em Inducing sparsity via the horseshoe prior
  in imaging problems}, Inverse Problems, 39 (2023), p.~074001.

\bibitem{Elden1977LeastSquares}
{\sc L.~Eld{\'e}n}, {\em Algorithms for the regularization of ill-conditioned
  least squares problems}, BIT Numerical Mathematics, 17 (1977), pp.~134--145.

\bibitem{Elden2012GMRESSingular}
{\sc L.~Elden and V.~Simoncini}, {\em Solving ill-posed linear systems with
  gmres and a singular preconditioner}, SIAM Journal on Matrix Analysis and
  Applications, 33 (2012), pp.~1369--1394.

\bibitem{engl1996regularization}
{\sc H.~W. Engl, M.~Hanke, and A.~Neubauer}, {\em Regularization of Inverse
  Problems}, vol.~375, Springer Science \& Business Media, 1996.

\bibitem{Fox2016FastSampling}
{\sc C.~Fox and R.~A. Norton}, {\em Fast sampling in a linear-{G}aussian
  inverse problem}, SIAM/ASA Journal on Uncertainty Quantification, 4 (2016),
  pp.~1191--1218.

\bibitem{gelman2006variance}
{\sc A.~Gelman}, {\em Prior distributions for variance parameters in
  hierarchical models (comment on article by {B}rowne and {D}raper)}, Bayesian
  Analysis, 1 (2006), pp.~515--534.

\bibitem{ghiglia1994robust}
{\sc D.~C. Ghiglia and L.~A. Romero}, {\em Robust two-dimensional weighted and
  unweighted phase unwrapping that uses fast transforms and iterative methods},
  JOSA A, 11 (1994), pp.~107--117.

\bibitem{glaubitz2024leveraging}
{\sc J.~Glaubitz and A.~Gelb}, {\em Leveraging joint sparsity in hierarchical
  {B}ayesian learning}, SIAM/ASA Journal on Uncertainty Quantification, 12
  (2024), pp.~442--472.

\bibitem{glaubitz2022generalized}
{\sc J.~Glaubitz, A.~Gelb, and G.~Song}, {\em Generalized sparse {B}ayesian
  learning and application to image reconstruction}, SIAM/ASA Journal on
  Uncertainty Quantification, 11 (2023), pp.~262--284.

\bibitem{Golub1999Inexact}
{\sc G.~H. Golub and Q.~Ye}, {\em Inexact preconditioned conjugate gradient
  method with inner-outer iteration}, SIAM Journal on Scientific Computing, 21
  (1999), pp.~1305--1320.

\bibitem{groetsch1993inverse}
{\sc C.~W. Groetsch and C.~Groetsch}, {\em Inverse Problems in the Mathematical
  Sciences}, vol.~52, Springer, 1993.

\bibitem{hansen2010discrete}
{\sc P.~C. Hansen}, {\em Discrete Inverse Problems: Insight and Algorithms},
  SIAM, 2010.

\bibitem{Hansen2013Oblique}
{\sc P.~C. Hansen}, {\em Oblique projections and standard-form transformations
  for discrete inverse problems}, Numerical Linear Algebra with Applications,
  20 (2013), pp.~250--258.

\bibitem{hansen2021computed}
{\sc P.~C. Hansen, J.~J{\o}rgensen, and W.~R. Lionheart}, {\em Computed
  Tomography: Algorithms, Insight, and Just Enough Theory}, SIAM, 2021.

\bibitem{hansen2006deblurring}
{\sc P.~C. Hansen, J.~G. Nagy, and D.~P. O'leary}, {\em Deblurring Images:
  Matrices, Spectra, and Filtering}, SIAM, 2006.

\bibitem{hayami2018convergence}
{\sc K.~Hayami}, {\em Convergence of the conjugate gradient method on singular
  systems}, National Institute of Informatics, Tokyo, Japan. Technical Report
  NII-2018-001E,  (2018).

\bibitem{hestenes1975pseudoinversus}
{\sc M.~R. Hestenes}, {\em Pseudoinversus and conjugate gradients},
  Communications of the ACM, 18 (1975), pp.~40--43.

\bibitem{Jeffreys1946Invariant}
{\sc H.~Jeffreys}, {\em An invariant form for the prior probability in
  estimation problems}, Proceedings of the Royal Society of London. Series A.
  Mathematical and Physical Sciences, 186 (1946), pp.~453--461.

\bibitem{Kaasschieter1988Preconditioned}
{\sc E.~F. Kaasschieter}, {\em Preconditioned conjugate gradients for solving
  singular systems}, Journal of Computational and Applied Mathematics, 24
  (1988), pp.~265--275.

\bibitem{kaipio2006statistical}
{\sc J.~Kaipio and E.~Somersalo}, {\em Statistical and Computational Inverse
  Problems}, vol.~160, Springer Science \& Business Media, 2006.

\bibitem{Makhoul1980FastCosine}
{\sc J.~Makhoul}, {\em A fast cosine transform in one and two dimensions}, IEEE
  Transactions on Acoustics, Speech, and Signal Processing, 28 (1980),
  pp.~27--34.

\bibitem{Mordukhovich2006}
{\sc B.~S. Mordukhovich}, {\em Variational Analysis and Generalized
  Differentiation I}, Springer Berlin Heidelberg, 2006.

\bibitem{pasha2024trips}
{\sc M.~Pasha, S.~Gazzola, C.~Sanderford, and U.~O. Ugwu}, {\em Trips-py:
  Techniques for regularization of inverse problems in python}, Numerical
  Algorithms,  (2024), pp.~1--38.

\bibitem{Pearson2020Preconditioners}
{\sc J.~W. Pearson and J.~Pestana}, {\em Preconditioners for {K}rylov subspace
  methods: An overview}, GAMM-Mitteilungen, 43 (2020), p.~e202000015.

\bibitem{Ranjbar2014CauchyVariableCoeff}
{\sc Z.~Ranjbar and L.~Eld{\'e}n}, {\em Solving an ill-posed {C}auchy problem
  for a two-dimensional parabolic {PDE} with variable coefficients using a
  preconditioned {GMRES} method}, SIAM Journal on Scientific Computing, 36
  (2014), pp.~B868--B886.

\bibitem{rioul1991wavelets}
{\sc O.~Rioul and M.~Vetterli}, {\em Wavelets and signal processing}, IEEE
  Signal Processing Magazine, 8 (1991), pp.~14--38.

\bibitem{rue2005gaussian}
{\sc H.~Rue and L.~Held}, {\em Gaussian Markov Random Fields: Theory and
  Applications}, CRC press, 2005.

\bibitem{saad2003iterative}
{\sc Y.~Saad}, {\em Iterative Methods for Sparse Linear Systems}, SIAM, 2003.

\bibitem{Saibaba2019marginalization}
{\sc A.~K. Saibaba, J.~Bardsley, D.~A. Brown, and A.~Alexanderian}, {\em
  Efficient marginalization-based {MCMC} methods for hierarchical {B}ayesian
  inverse problems}, SIAM/ASA Journal on Uncertainty Quantification, 7 (2019),
  pp.~1105--1131.

\bibitem{sanders2020effective}
{\sc T.~Sanders, R.~B. Platte, and R.~D. Skeel}, {\em Effective new methods for
  automated parameter selection in regularized inverse problems}, Applied
  Numerical Mathematics, 152 (2020), pp.~29--48.

\bibitem{si2024path}
{\sc Z.~Si, Y.~Liu, and A.~Strang}, {\em Path-following methods for maximum a
  posteriori estimators in {B}ayesian hierarchical models: How estimates depend
  on hyperparameters}, SIAM Journal on Optimization, 34 (2024), pp.~2201--2230.

\bibitem{stark2013image}
{\sc H.~Stark}, {\em Image Recovery: Theory and Application}, Elsevier, 2013.

\bibitem{Strang1999DCT}
{\sc G.~Strang}, {\em The discrete cosine transform}, SIAM Review, 41 (1999),
  pp.~135--147.

\bibitem{stuart2010inverse}
{\sc A.~M. Stuart}, {\em Inverse problems: a {B}ayesian perspective}, Acta
  Numerica, 19 (2010), pp.~451--559.

\bibitem{Tipping2001SparseBL}
{\sc M.~E. Tipping}, {\em Sparse bayesian learning and the relevance vector
  machine}, Journal of Machine Learning Research, 1 (2001), pp.~211--244.

\bibitem{Uribe2022}
{\sc F.~Uribe, Y.~Dong, and P.~C. Hansen}, {\em Horseshoe priors for
  edge-preserving linear {B}ayesian inversion}, SIAM Journal on Scientific
  Computing, 45 (2023), pp.~B337--B365.

\bibitem{vogel2002computational}
{\sc C.~R. Vogel}, {\em Computational Methods for Inverse Problems}, SIAM,
  2002.

\bibitem{wang2004image}
{\sc Z.~Wang, A.~C. Bovik, H.~R. Sheikh, and E.~P. Simoncelli}, {\em Image
  quality assessment: from error visibility to structural similarity}, IEEE
  Transactions on Image Processing, 13 (2004), pp.~600--612.

\bibitem{Wen2018SurveyNonconvex}
{\sc F.~Wen, L.~Chu, P.~Liu, and R.~C. Qiu}, {\em A survey on nonconvex
  regularization-based sparse and low-rank recovery in signal processing,
  statistics, and machine learning}, IEEE Access, 6 (2018), pp.~69883--69906.

\bibitem{wipf2004sparse}
{\sc D.~P. Wipf and B.~D. Rao}, {\em Sparse {B}ayesian learning for basis
  selection}, IEEE Transactions on Signal Processing, 52 (2004),
  pp.~2153--2164.

\bibitem{wright2015coordinate}
{\sc S.~J. Wright}, {\em Coordinate descent algorithms}, Mathematical
  Programming, 151 (2015), pp.~3--34.

\bibitem{xiao2023sequential}
{\sc Y.~Xiao and J.~Glaubitz}, {\em Sequential image recovery using joint
  hierarchical {B}ayesian learning}, Journal of Scientific Computing, 96
  (2023), p.~4.

\bibitem{zhang2011clarify}
{\sc Z.~Zhang and B.~D. Rao}, {\em Clarify some issues on the sparse {B}ayesian
  learning for sparse signal recovery}, University of California, San Diego,
  Tech. Rep,  (2011).

\end{thebibliography}

\end{document}